\let\nc\newcommand
\let\renc\renewcommand
\theoremstyle{plain}
\newtheorem{thm}{Theorem}
\newtheorem{prop}[thm]{Proposition}
\newtheorem{cor}[thm]{Corollary}
\newtheorem{lem}[thm]{Lemma}
\theoremstyle{definition}
\newtheorem{defn}[thm]{Definition}
\newtheorem{example}{Example}
\newtheorem{remark}[thm]{Remark}
\numberwithin{thm}{section}
\nc{\bdm}{\begin{displaymath}}
\nc{\edm}{\end{displaymath}}
\nc{\bthm}{\begin{thm}}
\nc{\ethm}{\end{thm}}
\nc{\blem}{\begin{lem}}
\nc{\elem}{\end{lem}}
\nc{\bcor}{\begin{cor}}
\nc{\ecor}{\end{cor}}
\nc{\bprop}{\begin{prop}}
\nc{\eprop}{\end{prop}}
\nc{\bdef}{\begin{defn}}
\nc{\eddef}{\end{defn}}
\renewcommand{\subsection}{\@startsection{subsection}{2}{0pt}{-3ex
plus -1ex minus -0.2ex}{-2mm plus -0pt minus
-2pt}{\normalfont\bfseries}} \makeatother
\newcommand{\Lmod}[1]{#1\text{-}{\mathsf{mod}}}
\newcommand{\idot}{{\:\raisebox{2pt}{\text{\circle*{1.5}}}}}
\DeclareMathOperator{\Ext}{\mathrm{Ext}}
\DeclareMathOperator{\End}{\mathrm{End}}
\DeclareMathOperator{\gr}{\mathrm{gr}}
\DeclareMathOperator{\Hilb}{{\mathrm{Hilb}}}
\DeclareMathOperator{\Rep}{\mathrm{Rep}}
\DeclareMathOperator{\Ad}{\mathrm{Ad}}
\DeclareMathOperator{\codim}{\mathrm{codim}}
\newcommand{\beq}{\begin{equation}\label}
\newcommand{\eeq}{\end{equation}}
\DeclareMathOperator{\Spec}{\mathrm{Spec}}
\newcommand{\iso}{{\;\stackrel{_\sim}{\to}\;}}
\DeclareMathOperator{\Hom}{\mathrm{Hom}}
\nc{\Z}{\mathbb{Z}}
\newcommand{\Q}{\mathbb{Q}}
\newcommand{\C}{\mathbb{C}}
\newcommand{\Fun}{\mbox{\mathrm{Fun}}\,}
\nc{\rank}{\textrm{rank} \,}
\nc{\ds}{\dots}
\let\mc\mathcal
\let\mf\mathfrak
\let\mr\mathrm
\nc{\mbf}{\mathbf}
\nc{\LK}{\textsf{Irr}(K)}
\nc{\LW}{\textsf{Irr}(W)}
\nc{\Res}{\mathsf{Res} \, }
\nc{\Ind}{\mathsf{Ind} \, }
\nc{\cont}{\textrm{cont}}
\nc{\msf}{\mathsf}
\newcommand{\git}{\ensuremath{/\!\!/\!}}
\nc{\minusone}{-1}
\nc{\minustwo}{-2}
\nc{\Mod}{\mathrm{Mod} \,}
\nc{\ms}{\mathscr}
\nc{\Frac}{\mathrm{Frac} \,}
\nc{\ra}{\rightarrow}
\nc{\hra}{\hookrightarrow}
\nc{\lab}{\label}
\renc{\O}{\mc{O}}
\nc{\Tan}{\mc{T}}
\nc{\ul}{\underline}
\nc{\s}{\mathfrak{S}}
\nc{\g}{\mf{g}}
\nc{\pg}{\mf{pg}}
\nc{\pa}{\partial}
\nc{\tit}{\textit}
\nc{\Maxspec}{\mathrm{Maxspec} \, }
\nc{\gldim}{\mathrm{gl.dim}}
\nc{\rkm}{\mathrm{rk} \, (\mf{m})}
\nc{\sm}{\mathrm{sm}}
\nc{\PD}{\mathbb{PD}}
\nc{\hilb}{\textrm{Hilb}}
\nc{\T}{\mathbb{T}}
\nc{\X}{\mathfrak{X}}
\nc{\W}{\mathscr{W}}
\nc{\kt}{\mbf{k}}
\nc{\ko}{\mbf{k}(0)}
\nc{\Ok}{\mc{O}_G \boxtimes \kt_X}
\nc{\Oko}{\mc{O}_G \boxtimes \ko_X}
\nc{\OYk}{\mc{O}_Y \boxtimes \kt_X}
\nc{\id}{\msf{id}}
\nc{\A}{\mathbb{A}}
\nc{\Supp}{\mathrm{Supp}}
\nc{\Grat}{\mc{Grat}}
\nc{\Squo}[1]{\A^{(#1)}}
\nc{\twist}{\mathrm{twist}}
\nc{\Cd}{\mc{C}}
\nc{\Span}{\mathrm{Span}}
\nc{\Grass}{\mathrm{Gr}}
\nc{\Fr}{\mathrm{Fr}}
\nc{\pco}[1]{k[V]^{p\mathrm{co} #1}}
\nc{\Irr}{\mathsf{Irr} }
\renc{\o}{\otimes}
\renc{\gr}{\mathsf{gr}}
\nc{\U}{\mathsf{U}}
\nc{\algD}{\mf{D}}
\nc{\hr}{\mf{h}_{\textrm{reg}}}
\nc{\D}{\mathscr{D}}
\nc{\PIdeg}{\mathrm{P.I.-degree}}
\nc{\ch}{\mathrm{ch}}
\nc{\Fl}{\mathcal{F}\mathrm{l}}
\nc{\Stab}{\mathrm{Stab}}
\nc{\Der}{\mathrm{Der}}
\nc{\rightsim}{\stackrel{\sim}{\longrightarrow}}
\nc{\HZ}{H_{\mbf{h},\Z}(\Z_m)}
\nc{\sing}{\mathrm{sing}}
\nc{\dd}{\mathscr{D}}
\nc{\GKdim}{\mathrm{G.K. dim}}
\nc{\PIdegree}{\mathrm{P.I. degree}}
\nc{\Du}{\mathbb{D}}
\renc{\Fun}{\mathrm{Fun}}
\nc{\Xo}{\mathbb{X}}
\nc{\Cs}{\C^{\times}}
\nc{\dQ}{\overline{Q}}
\nc{\bM}{\mathbf{M}}
\nc{\bv}{\mathbf{v}}
\nc{\bw}{\mathbf{w}}
\nc{\Qv}{\mathfrak{M}}
\nc{\barOm}{\overline{\Omega}}
\nc{\Om}{\Omega}
\nc{\Lag}{\mathfrak{L}}
\nc{\free}{\mathrm{free}}
\nc{\Fix}{\mathfrak{F}}
\nc{\IC}{\mathrm{IC}}
\nc{\Ham}{\mathrm{Ham}}
\nc{\bnu}{\boldsymbol{\nu}}
\nc{\tX}{\mc{X}}
\nc{\tY}{\mc{Y}}
\nc{\reg}{\mathrm{reg}}
\nc{\Char}{\mathfrak{X}}
\nc{\Qo}{Q_0}
\nc{\Qn}{Q_1}
\nc{\ba}{\boldsymbol{\alpha}}
\nc{\Nak}[3]{\mf{M}_{{#1}} ({#2},{#3}) }
\nc{\Nakq}[4]{\mf{M}^{{#1}}_{{#2}} ({#3},{#4}) }
\renc{\a}{\alpha}
\nc{\hcf}[1]{\mathsf{hcf}({#1})}
\nc{\stable}{\mathrm{stable}}
\newcommand{\CharG}{\mathscr{X}}
\newcommand{\CharS}{\mathscr{Y}}
\newcommand{\GChar}[2]{\mathscr{Y}(#1,#2)}
\newcommand{\GCharg}[1]{\GChar{#1}{g}}
\nc{\PGL}{{PGL}}
\nc{\PG}{{PG}}
\nc{\G}{{G}}
\nc{\GL}{GL}
\nc{\SL}{SL}
\nc{\Sp}{{Sp}}
\begin{document}

\title{Symplectic resolutions of character varieties}

\begin{abstract}
  In this article we consider the connected component of the identity
  of $\G$-character varieties of 
  compact Riemann surfaces of genus $g > 0$, for connected complex reductive groups $\G$ of type $A$ (e.g., $\SL_n$ and $\GL_n$). 
  We show that these varieties are symplectic singularities and classify which admit symplectic resolutions. The classification reduces to the semi-simple case,
  where
  we show that a resolution exists if and only if either $g=1$ and $G$ is a product of special linear groups of any rank and copies of the group $\PGL_2$, or if $g=2$ and $G = (\SL_2)^m$ for some $m$.
\end{abstract}

\author{Gwyn Bellamy}

\address{School of Mathematics and Statistics, University of Glasgow, University Place, Glasgow, G12 8QQ, UK}

\email{gwyn.bellamy@glasgow.ac.uk}

\author{Travis Schedler}

\address{Department of Mathematics, Imperial College London, South Kensington Campus, London, SW7 2AZ, UK}


\email{trasched@gmail.com}

\subjclass[2010]{16G20;17B63,14D25,58F05,16S80}

\keywords{symplectic resolution, character variety, Poisson variety}

\maketitle

\section{Introduction}

The character varieties associated to the fundamental group of a
topological space have long been objects of study for topologists,
group theorists and algebraic geometers. The character varieties of
reductive groups
associated to the fundamental group of a Riemann surface play a
particularly prominent role in this theory since it has been shown by
Goldman that their smooth locus caries a natural symplectic structure
(which is complex algebraic, and in particular holomorphic). The aim
of this article is to study how this symplectic structure degenerates
along the singular locus of the character variety. This is motivated
by earlier work \cite{BS-srqv} of the authors, where the case of
quiver varieties is considered. Our results for character varieties
are based, in large part, on the ideas developed in \cite{BS-srqv} but
the proofs given here are independent of the latter.

Before explaining the main results of the article, we introduce some
notation. Let $\Sigma$ be a compact Riemann surface of genus $g > 0$
and $\pi$ its fundamental group. Let $\G$ be a connected complex
reductive group whose simple quotients all have type $A$ (i.e., are
all isomorphic to $\PGL_{n_i}$ for some $n_i$). The $\G$-character variety of $\Sigma$ is the affine quotient $\Hom(\pi,\G) \git \, \G$. In general, by \cite{LiCharacter}, $\Hom(\pi,\G)$ is not connected. We will only consider the connected component of the identity, $\Hom(\pi,\G)^{\circ}$,
and set 
$$
\GCharg{\G} := \Hom(\pi,\G)^\circ \git \, \G.
$$
We will refer to $\GCharg{\G}$ as the \textit{connected $\G$-character variety}. Note that, in the case that $\G=\GL_n$ or $\SL_n$, we have $\Hom(\pi,\G)=\Hom(\pi,\G)^\circ$.
 We do not consider the case where $\Sigma$ has punctures; in this case it is natural to impose conditions on the monodromy about the punctures and this situation is addressed in \cite{ST-srmqvcvps}.  

It is shown in \cite{GoldmanInvariantFunctions} that the symplectic structure on the smooth locus extends to a Poisson structure on the whole complex algebraic variety. An important (in both symplectic algebraic geometry and geometric representation theory) class of Poisson varieties with generically non-degenerate Poisson structure are those with symplectic singularities in the sense of Beauville \cite{Beauville}. To have symplectic singularities requires a strong compatibility between the symplectic form and resolutions of singularities. However, once one knows that a given space has symplectic singularities, it allows one to make strong statements about the quantizations and Poisson deformations of the space. For instance, Namikawa \cite{Namikawa} tells us that the (formal) Poisson deformations of such a space are unobstructed, if it is an affine variety. Our first result shows that: 

\begin{thm}\label{thm:slsymp}
  The irreducible variety $\GCharg{\G}$ has symplectic
  singularities.
      \end{thm}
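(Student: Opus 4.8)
\emph{Proof strategy.}
The plan is to deduce the statement from three standard inputs: Goldman's holomorphic symplectic form on the smooth locus of $\GCharg{\G}$; Boutot's theorem, that an affine GIT quotient of a variety with rational singularities by a reductive group again has rational singularities (and is normal if the source is normal); and the criterion --- see \cite{Beauville,Namikawa} --- that a normal variety whose smooth locus carries a holomorphic symplectic form has symplectic singularities if and only if it has rational singularities. Writing $Z := \Hom(\pi,\G)^{\circ}$, endowed with the conjugation action of $\G$ and with affine GIT quotient $\GCharg{\G}$, it then suffices to show that $Z$ is a normal variety with rational singularities: normality of $Z$ passes to $\GCharg{\G}$ (a GIT quotient of a normal variety), which is moreover connected, hence irreducible; Boutot gives rational (hence canonical Gorenstein) singularities for $\GCharg{\G}$; and the criterion applies together with Goldman's form. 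I would treat $g=1$ and $g\ge2$ separately.

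For $g=1$, the variety $Z = \{(a,b)\in\G^{2} : ab=ba\}$ is the commuting variety of $\G$, which is irreducible, and a generic commuting pair is simultaneously conjugate into a fixed maximal torus $T$ of $\G$. This yields the well-known isomorphism $\GCharg{\G} \cong (T\times T)/W$, with the Weyl group $W$ acting diagonally; Goldman's form corresponds to the translation-invariant symplectic form on $T\times T$ induced by a $W$-invariant pairing on $\mathfrak{t}$, and $W$ acts by symplectomorphisms. Thus $\GCharg{\G}$ is a symplectic quotient singularity, and the theorem holds in this case.

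For $g\ge2$, I would realise $Z$ as a union of irreducible components of the fibre $\mu^{-1}(1)$ of the product-of-commutators map $\mu\colon\G^{2g}\to\G' := [\G,\G]$, $(a_i,b_i)_{i=1}^{g}\mapsto\prod_{i}[a_i,b_i]$. The first step is that $Z$ is a local complete intersection, hence Cohen--Macaulay and Gorenstein: this comes down to the classical fact that for $g\ge2$ the component $\Hom(\pi,\G)^{\circ}$ has the expected dimension $(2g-1)\dim\G+\dim Z(\G)$, so that it is cut out in the smooth variety $\G^{2g}$ by $\dim\G'$ equations in the expected codimension. The second step is normality: since $Z$ is Cohen--Macaulay, by Serre's criterion it is enough to show $\codim_{Z}(Z_{\sing})\ge2$. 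For this I would use that, $Z$ being a complete intersection of the expected dimension, its smooth locus is exactly the locus on which $d\mu$ is surjective, which by the moment-map property is the locus of representations $\rho$ with stabiliser $\G^{\rho}$ of minimal dimension $\dim Z(\G)$; stratifying $Z$ by the conjugacy type of $\G^{\rho}$ and bounding the dimension of the strata with larger stabiliser (whose codimension in $Z$ grows with $g$) yields the required estimate when $g\ge2$. Hence $Z$ is normal, and in particular irreducible.

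The remaining and hardest point is that $Z$ has rational singularities for $g\ge2$. As $Z$ is Gorenstein this is equivalent to $Z$ having canonical singularities, so it reduces to a discrepancy bound on some resolution. One approach I would pursue is to first reduce, using the type $A$ hypothesis together with standard isogeny and central-torus arguments, to the case $\G=\SL_{n}$, and then to construct a flat degeneration of $\mu^{-1}(1)$ to the additive moment-map fibre $\bar\mu^{-1}(0)\subset T^{*}\Rep(Q,\bv)$ of the quiver $Q$ with one vertex and $g$ loops --- reflecting the fact that a group commutator agrees with a Lie bracket to second order --- and to transport rational singularities from the special fibre, where it holds, to the generic fibre $Z$, using that the rational-singularities condition is open in flat families. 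An alternative would be to exhibit directly a resolution $\widetilde{Z}\to Z$ with vanishing higher direct images of the structure sheaf. In either case, once $Z$ is known to be a normal Gorenstein variety with rational singularities, Boutot's theorem transfers these properties to $\GCharg{\G}$ and the criterion of the first paragraph concludes. I expect the rational-singularities step for the representation variety to be the crux of the argument, and the point where the restriction to type $A$ genuinely enters.
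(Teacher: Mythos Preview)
Your strategy is genuinely different from the paper's. You aim to show $Z=\Hom(\pi,\G)^\circ$ has rational singularities, push this through Boutot to $\GCharg{\G}$, and then invoke the criterion (Namikawa, after Beauville) that normal $+$ symplectic form on the smooth locus $+$ rational singularities implies symplectic singularities. The paper works entirely on the quotient side and never proves rational singularities of $Z$: for $\GL_n$ and $\SL_n$ with $g\ge 2$ and $(n,g)\ne(2,2)$ it shows by an elementary stratum-dimension count that the singular locus of $\GCharg{\G}$ has codimension $\ge 4$, and then applies Flenner's extension theorem directly; for $(n,g)=(2,2)$ it constructs an explicit symplectic resolution by blowing up the reduced singular locus (following Lehn--Sorger and Kaledin--Lehn); and for general type $A$ it writes $\GCharg{\G}\cong\GCharg{\mathbb G}/Z_0^{2g}$ as a finite symplectic quotient of the $\SL$-case and quotes Beauville's Proposition~2.4. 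Your route, if completed, would be more uniform (no case split at $(2,2)$), while the paper's is more elementary and self-contained.

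The genuine gap is precisely where you locate it: the rational singularities of $Z$ for $g\ge 2$. Your flat-degeneration sketch (group commutator to Lie bracket) is the right intuition and is close to what Budur actually does, but it is substantial to make rigorous: one must construct the degeneration so that the family is flat, know rational singularities for the additive fibre (itself a nontrivial input), and then invoke Elkik's deformation theorem---``open in flat families'' is not quite the correct formulation. Until that is carried out, the argument is incomplete. Two smaller points: in the $g=1$ case the full commuting variety of $\G$ need not be irreducible for non-simply-connected $\G$ (you are saved because you take $\Hom(\pi,\G)^\circ$, but the phrasing should reflect this), and the identification $\GCharg{\G}\cong(T\times T)/W$ for general type $A$ groups is not as automatic as you suggest---the paper only proves it for $\GL_n,\SL_n$ and handles other groups via the finite-quotient description.
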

      Since, by  \cite[Proposition 1.3]{Beauville}, all symplectic singularities are rational Gorenstein (meaning Gorenstein with rational singularities, and not meaning the varieties to be rational),
    we deduce:

\begin{cor}\label{cor:rationalGorenstein}
  The variety $\GCharg{\G}$ has rational Gorenstein singularities.
\end{cor}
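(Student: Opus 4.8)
The plan is to deduce Corollary~\ref{cor:rationalGorenstein} directly from Theorem~\ref{thm:slsymp} together with the general theory of symplectic singularities, so that there is essentially nothing left to prove once Theorem~\ref{thm:slsymp} is in hand. Concretely, I would argue in two steps. First, invoke Theorem~\ref{thm:slsymp}: the variety $\GCharg{\G}$ is irreducible (hence in particular normal, as part of having symplectic singularities) and carries a symplectic singularity structure in the sense of Beauville, i.e.\ a holomorphic symplectic form on its smooth locus whose pullback to any resolution of singularities extends to a regular $2$-form. Second, apply \cite[Proposition~1.3]{Beauville}, according to which every variety with symplectic singularities is Gorenstein with rational singularities. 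Combining the two statements yields exactly the assertion of the corollary.

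For orientation, let me recall why Beauville's implication holds, since this is the only non-formal ingredient (and I would simply cite it rather than reproduce it). The top exterior power of the symplectic form trivialises the canonical bundle on the smooth locus, so the normal variety $\GCharg{\G}$ has trivial canonical class; the extension property of the symplectic form along a resolution then translates into non-negativity of all discrepancies, so that the singularities are canonical and Gorenstein, and canonical Gorenstein singularities are rational. None of this requires anything specific to character varieties beyond the input of Theorem~\ref{thm:slsymp}.

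Consequently the only real content sits in Theorem~\ref{thm:slsymp}, which is established independently in the body of the paper; the ``main obstacle'' for the corollary is therefore entirely outsourced to that theorem, namely to checking that Goldman's symplectic form on the smooth locus extends along resolutions of $\GCharg{\G}$. For completeness I would remark that one could instead attempt to prove the weaker rational-Gorenstein assertion more directly when $\G$ is semisimple, using that $\Hom(\pi,\G)^{\circ}$ is then a connected component of the fibre over $1$ of the commutator map $\G^{2g}\to\G$, which for $g\geq 2$ has the expected dimension and is thus a local complete intersection in the smooth variety $\G^{2g}$ — hence Gorenstein — after which one would control the conjugation GIT quotient by an \'etale slice argument; however, this route does not cover $g=1$ (where the relevant fibre is larger than expected, being essentially the commuting variety), and in any case it re-proves much of the harder Theorem~\ref{thm:slsymp}, so I would prefer the short deduction above.
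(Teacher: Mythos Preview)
Your proposal is correct and matches the paper's own argument exactly: the corollary is deduced immediately from Theorem~\ref{thm:slsymp} together with \cite[Proposition~1.3]{Beauville}, which states that symplectic singularities are Gorenstein with rational singularities. Your additional explanation of why Beauville's implication holds and your remark on an alternative local-complete-intersection approach are accurate but superfluous for the corollary itself.
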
 

The fact that the character variety has rational singularities was
established for large genus in
\cite{AizenbudAvni}. Corollary~\ref{cor:rationalGorenstein} shows that
the bound on the genus $g$ appearing in \cite[Theorem
IX]{AizenbudAvni} is not required, at least when $\G$ is of type
$A$. In the papers \cite{BudurRational,BudurZordan} it is shown that
$\Hom(\pi,\GL_n)$ and $\Hom(\pi,\SL_n)$ also have rational
singularities.

Theorem~\ref{thm:slsymp} immediately raises the question of whether
the character varieties admit (complex algebraic) symplectic
resolutions; these are resolutions of singularities where the
symplectic form on the smooth locus of the singularity extends to a
symplectic form on the whole of the resolution. This is a very strong
condition, and symplectic resolutions are correspondingly rare. The
main result of this article is a complete classification of when these
character varieties admit symplectic resolution. As usual in these
situations, much of the effort is spent in proving that certain
obstructions to the existence of symplectic resolutions exist in many
examples.  Such an obstruction is known to exist when the variety is
singular, $\mathbb{Q}$-factorial, and terminal.

Using Dr\'ezet's Theorem \cite{Drezet} on (local) factoriality of GIT quotients, we show that:    
\begin{thm}\label{thm:slsqfactor}
 The variety $\GCharg{\G}$ is $\mathbb{Q}$-factorial. In the case of $g > 1$ and $(n,g) \neq (2,2)$ it is moreover locally factorial for $\G=\GL_n$ or $\SL_n$.
\end{thm}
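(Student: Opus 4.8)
The key input is Drézet's criterion for local factoriality of GIT quotients $X /\!\!/ G$ where $X$ is a smooth affine variety (or, more generally, a locally factorial affine variety) acted on by a connected reductive group $G$: the quotient is locally factorial precisely when every codimension-one component of the unstable/non-closed-orbit locus has the property that generic points in that locus have stabilizers whose associated character group behaves well — more precisely, Drézet expresses local factoriality in terms of the nonexistence of characters of generic stabilizers that do not extend, so that $\mathrm{Pic}$ of the stable locus surjects appropriately onto the class group. So let me describe the plan concretely.

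First I would set up the representation space. Write $\rho = \Hom(\pi,\G)^\circ \subseteq \G^{2g}$, which is the scheme cut out by the single relation $\prod_i [a_i,b_i] = 1$; on the identity component this is a reduced complete intersection, smooth away from the locus where the differential of the commutator map drops rank, i.e.\ at representations whose stabilizer is positive-dimensional. In the $\GL_n$ or $\SL_n$ case $\rho$ is irreducible. The first step is therefore to record that $\rho$ is a normal, locally complete intersection variety, hence locally factorial in codimension... — actually one needs that $\rho$ itself is locally factorial, which follows because $\rho$ is a complete intersection that is regular in codimension $\geq 3$ (its singular locus, the locus of representations with reductive stabilizer of positive dimension, has high codimension once $g$ is large), combined with Grothendieck's parafactoriality/Samuel's theorem: a complete intersection regular in codimension $3$ is parafactorial, hence locally factorial. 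This is where the hypothesis $g > 1$ and the genus bound enters: one needs the codimension of the locus of representations with non-central stabilizer to be at least $3$ (resp.\ at least $2$ for the weaker $\mathbb{Q}$-factorial statement, via the rational-singularities input from Corollary~\ref{cor:rationalGorenstein} — a $\mathbb{Q}$-factorial statement follows from $\mathbb{Q}$-Gorenstein plus rational singularities plus the appropriate codimension estimate, or directly from a $\mathbb{Q}$-coefficient version of Drézet).

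Second, I would apply Drézet's theorem to the quotient $\rho /\!\!/ \G = \GChar{\G}{g}$. The stable locus is the locus of irreducible representations (with stabilizer equal to the center $Z(\G)$, which acts trivially on $\rho$); its complement — the strictly semisimple and non-closed-orbit locus — must be analyzed for codimension-one components. The generic point of the biggest such stratum corresponds to a representation that is a sum of two non-isomorphic irreducibles, with stabilizer a one-dimensional torus; Drézet's criterion then asks whether the restriction map from characters of $\G$ to characters of this torus-stabilizer is surjective (so that the relevant line bundles descend). For $\G = \SL_n$ or $\GL_n$ the weight lattice computation is explicit, and the obstruction vanishes exactly when there is no codimension-one stratum with a "bad" stabilizer character — which is what forces the exclusion of $(n,g) = (2,2)$: for $g=2$, $n=2$ there is a codimension-one stratum (representations that are a direct sum of two distinct rank-one pieces, i.e.\ come from an abelian representation, have codimension exactly... ) whose stabilizer produces a nontrivial class in the class group, killing local factoriality but still allowing $\mathbb{Q}$-factoriality.

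Third, for the universal $\mathbb{Q}$-factoriality claim (all $\G$, all $g > 0$): here I would not expect local factoriality, only $\mathbb{Q}$-factoriality, and the argument is softer — one uses that $\GChar{\G}{g}$ has symplectic singularities (Theorem~\ref{thm:slsymp}), hence rational singularities, hence for a $\mathbb{Q}$-factoriality statement it suffices to check that $H^2$ of a resolution, or equivalently the divisor class group modulo Cartier divisors, is torsion; alternatively one invokes the $\mathbb{Q}$-version of Drézet where one only needs characters to be surjective after tensoring with $\mathbb{Q}$, which always holds since the stabilizers in question are tori or products of $\SL$'s times tori and the relevant lattice maps have finite cokernel. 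The main obstacle, and where I would spend the most care, is the precise stratification-by-stabilizer of $\rho$ together with the codimension count: identifying exactly which conjugacy classes of reductive subgroups $H \le \G$ occur as stabilizers, computing $\dim \rho^H /\!\!/ N_\G(H)$ for each, and verifying that for $g > 1$, $(n,g)\ne(2,2)$ the only codimension-$\le 1$ strata are harmless — this is a genus-dependent dimension bookkeeping exercise, and getting the exceptional case $(2,2)$ to fall out on the correct side of the inequality is the delicate point.
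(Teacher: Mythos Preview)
Your first step is right and matches the paper: show $\Hom(\pi,\G)$ is locally factorial via the complete-intersection property plus Grothendieck's $(R_3)$ criterion (this is Lemma~\ref{lem:freecompcharvariety}), and you correctly locate the codimension estimate on the non-free locus as the source of the restriction $g>1$, $(n,g)\neq(2,2)$. The gap is in how you apply Dr\'ezet. You propose to verify Dr\'ezet's criterion at the generic point of the \emph{largest} proper stratum, where the stabilizer is a one-dimensional torus. But Dr\'ezet's theorem is pointwise: establishing factoriality there tells you nothing about deeper strata. Moreover, your formulation (``surjectivity of the restriction map from characters of $\G$ to characters of the torus stabilizer'') cannot work as stated, since the effective group $\PGL_n$ has trivial character group, so this restriction map is zero into $\mathbb Z$. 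The paper's key move is the opposite one: apply Dr\'ezet at the \emph{deepest} stratum $C_{(n,1)}$, where the stabilizer of a lift is all of $\PGL_n$. Since $\PGL_n$ has no nontrivial characters, every $\PGL_n$-equivariant line bundle has trivial stabilizer action at such a point, and Dr\'ezet gives factoriality there for free. Then one invokes that the factorial locus is open \cite{OpenFactorial} and is constant along each stratum; since it contains the unique closed stratum, it is everything. This openness-plus-closed-stratum argument is the idea your plan is missing.

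Two smaller points. Your explanation of the $(2,2)$ exclusion via a ``codimension-one stratum'' is not correct: all strata have even codimension, and the actual obstruction is that the codimension-four bound on the non-free locus upstairs fails, so Lemma~\ref{lem:freecompcharvariety} does not apply and one cannot feed a locally factorial prequotient into Dr\'ezet. For the residual $\mathbb Q$-factoriality claims the paper does not use a ``$\mathbb Q$-Dr\'ezet'' at all: the case $(n,g)=(2,2)$ is handled by citing \cite{KaledinLehn}, the case $g=1$ by the explicit isomorphism $\GChar{\G}{1}\cong (\T\times \T)/\s_n$ (a finite quotient of a smooth variety), and the general type-$A$ case by passing to products via \cite{OpenFactorial} and then to finite quotients. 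Your sketch would need to be replaced by these separate arguments.
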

Here ``locally factorial'' means that the local rings are unique factorization domains, which is equivalent to saying that every Weil divisor is Cartier; $\mathbb{Q}$-factorial is the weaker condition that some multiple of every Weil divisor is Cartier.  

To apply this to the question of existence of symplectic resolutions, we need to estimate the codimension of the singular locus. Thanks to   Theorem \ref{thm:slsymp} together with
\cite[Theorem 2.3]{Kaledinsympsingularities},
  the singular locus of $\GCharg{\G}$ is a finite union of even-dimensional strata: the symplectic leaves. In the case of $\G=\GL_n$, this is proved in \cite{Sim-mrfgspv2} and is in \cite{SikoraAbelian} when $g = 1$; by our methods, we can also reduce the general case to these ones.
Note that, if $\G$ is abelian, i.e., a torus, then $\GCharg{\G} \cong \G^{2g}$, which is not interesting for our purposes.
\begin{prop}\label{p:codim}
  Assume that $\G$ is nonabelian.  Then $\GCharg{\G}$ is
  singular. Its singular locus has codimension at least four if and
  only if $g > 1$ and, in the case $g=2$, no simple quotient of $\G$
   is isomorphic to $\PGL_2$.
  \end{prop}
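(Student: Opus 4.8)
The plan is to reduce to the case where $\G$ is semisimple (indeed simple) and then carry out an explicit dimension count on the strata of $\GCharg{\G}$ indexed by conjugacy classes of reductive subgroups (the stabilizer type of a representation). First I would record the stratification: a point of $\GCharg{\G}$ is a closed $\G$-orbit of a representation $\rho\colon\pi\to\G$ whose image is a reductive subgroup, and the isomorphism type of the stratum through $[\rho]$ depends only on the conjugacy class of the stabilizer (equivalently the centralizer $H=Z_\G(\rho(\pi))$). The top stratum is the locus of representations with stabilizer the center $Z(\G)$, i.e., the (scheme-theoretically) smooth locus; its complement is the singular locus by Proposition~\ref{p:codim}'s first assertion, which itself follows from Theorem~\ref{thm:slsymp} plus $\GCharg{\G}$ being non-smooth (a representation with image in a proper reductive subgroup always exists when $\G$ is nonabelian, since $\pi$ surjects onto $\Z^{2g}$ and one can factor through a torus or through a proper Levi/reductive subgroup).

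Next I would compute dimensions. The dimension of $\GCharg{\G}$ itself is $(2g-2)\dim\G + 2\dim Z(\G)$ (for $\G$ semisimple this is $(2g-2)\dim\G$); more generally, the stratum of representations with centralizer $H$ has dimension $(2g-2)\dim\G + 2\dim Z(\G) - ((2g-2)\dim(\G/H)\text{-type correction})$ — concretely, such a representation lands in the centralizer $\G' = Z_\G(H)^\circ$-type subgroup, and the local model near $[\rho]$ is the character variety of that smaller reductive group, which has dimension $(2g-2)\dim \G' + 2\dim Z(\G')$. So the codimension of the stratum is $(2g-2)(\dim\G-\dim\G') + 2(\dim Z(\G)-\dim Z(\G'))$. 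The key point is then a purely group-theoretic one: minimize this codimension over all proper reductive subgroups $\G'\subsetneq\G$ arising in this way (equivalently, over all nontrivial reductive $H$). For $\G$ simple of type $A$, the relevant extreme cases are (i) $\G'$ a maximal torus (or more generally a maximal-rank subgroup), giving codimension controlled by $(2g-2)(\dim\G - \rk\G)$ plus a contribution $2\rk\G$ from the center jump — this is $\geq 4$ as soon as $g\geq 2$, and for $g=2$ one needs $\dim\G - \rk\G + \rk\G = \dim\G \geq$ enough, which fails precisely when the correction from $PGL_2$ (where $\dim\G=3$, $\rk\G=1$) makes the codimension exactly $2$; and (ii) $g=1$, where $(2g-2)=0$ kills the first term and the codimension is just $2\dim Z(\G')>0$ but can be as small as $2$, so the singular locus has codimension $2$ for every nonabelian $\G$ when $g=1$.

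Concretely, then, the argument is: for $g=1$ exhibit a codimension-$2$ stratum (representations into a maximal torus have centralizer the torus, and the stratum is $\G$-conjugates of $\T^{2g}=\T^2$, of codimension $2\dim\G/\T$-complement... I would instead directly check the abelian locus gives codimension $2$); for $g=2$, show that the minimum codimension is $4$ unless some simple factor is $PGL_2$, in which case a representation with image a copy of $PGL_2$-subgroup degenerating has a codimension-$2$ stratum; and for $g\geq 3$, show every stratum has codimension $\geq (2g-2)\geq 4$. For the reduction from general $\G$ to the simple case, I would use that $\G$ is isogenous to a product of $Z(\G)^\circ$ and simple factors, that $\Hom(\pi,-)^\circ\git$ behaves well under such isogenies and products (the character variety of a product is, up to the center, a product of character varieties, and the torus factor contributes a smooth factor $\T^{2g}$ that does not affect codimensions of singular strata), and that the singular locus of a product $X\times Y$ with $X$ smooth is $X\times(\text{sing }Y)$, of the same codimension as $\text{sing }Y$ in $Y$.

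The main obstacle I anticipate is the bookkeeping in the dimension formula for the strata — specifically, correctly accounting for the jump in the dimension of the center when passing from $\G$ to the subgroup $\G'=Z_\G(H)^\circ$ through which the representation factors, since it is exactly this center-jump term (contributing $+2$ per unit of rank increase) that is responsible for the anomalous $g=2$, $PGL_2$ and all $g=1$ cases having codimension-$2$ rather than codimension-$4$ singular strata. Getting the sharp inequality — that codimension $\geq 4$ holds for \emph{all} strata when $g\geq 2$ and no $PGL_2$ factor appears when $g=2$ — requires checking that no other proper reductive subgroup $\G'$ (e.g. $SL_{n-1}\times GL_1 \subset SL_n$, or a Levi, or a subgroup of the form $SL_a\times SL_b$ embedded block-diagonally) produces a smaller codimension; this is a finite case-analysis using $\dim\G - \dim\G' \geq$ the minimal codimension of a proper reductive subgroup of a type-$A$ group, but the center correction must be tracked in each case. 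I expect the rest (the reduction to semisimple, the identification of the local model with a smaller character variety, the non-smoothness statement) to be relatively routine given Theorem~\ref{thm:slsymp} and standard facts about GIT quotients and étale-local structure of character varieties (Luna's slice theorem).
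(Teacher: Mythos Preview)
Your strategy---stratifying $\GCharg{\G}$ by stabilizer type and computing codimensions via a formula of the shape $(2g-2)(\dim\G-\dim\G')+2(\dim Z(\G)-\dim Z(\G'))$---is essentially what the paper does for $\G=\SL_n$ and $\GL_n$ (Lemma~\ref{lem:stratadimChar}), and your observations about the $(n,g)=(2,2)$ and $g=1$ exceptions are correct there. However, there is a genuine gap in the extension to general type~$A$ groups.

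The problem is your assumption that a representation $\rho$ with stabilizer $H$ factors through $\G'=Z_\G(H)^\circ$. In fact $\rho(\pi)\subset Z_\G(H)$, which can be \emph{disconnected}. For $\G=\PGL_n$ (or any non-simply-connected type~$A$ group), there exist representations $\rho:\pi\to\PGL_n$ whose lift $\tilde\rho:\pi\to\SL_n$ is irreducible but satisfies $\sigma\cdot\tilde\rho\cong\tilde\rho$ for some nontrivial $\sigma\in Z(\SL_n)^{2g}$. Such a $\rho$ has finite nontrivial stabilizer $H\subset\PGL_n$, yet its image is Zariski-dense in $\PGL_n$ and certainly does not land in the torus $Z_{\PGL_n}(H)^\circ$. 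Your dimension formula is simply inapplicable to these strata, and your reduction ``$\G$ is isogenous to a product of simple factors\ldots behaves well under isogenies'' hides exactly this point: passing to the isogeny introduces a finite quotient $\GCharg{\G}=\GCharg{\mathbb{G}}/Z_0^{2g}$, and the fixed points of $Z_0^{2g}$ on the smooth locus of $\GCharg{\mathbb{G}}$ are new singularities not captured by the Levi-type stratification of $\GCharg{\mathbb{G}}$.

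The paper's proof separates the two sources of singularities explicitly. It lifts to $\mathbb{G}=H\times\prod_i\SL_{n_i}$, handles the representation-type strata of $\GCharg{\mathbb{G}}$ via the weighted-partition count (your formula, essentially Lemma~\ref{lem:stratadimChar}), and then separately bounds the codimension of the non-free locus for the $Z_0^{2g}$-action via Proposition~\ref{prop:fixedsmoothcodim4}. The latter uses Goldman's identification $T_{[\rho]}\GCharg{\mathbb{G}}_{\mathrm{irr}}\cong H^1(\pi,\mathfrak{g}_\rho)$ and an Euler-characteristic argument to show that the $\sigma$-fixed locus has codimension $2(g-1)(\dim\mathfrak{g}-\dim\mathfrak{g}^A)\geq 4(g-1)$, where $A$ is an intertwiner realizing $\sigma\cdot\tilde\rho\cong\tilde\rho$. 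This is the missing ingredient in your proposal. (Separately, as you note, the $g=1$ case requires its own treatment since the formula $\dim\GCharg{\G}=(2g-2)\dim\G+2\dim Z(\G)$ fails there; the paper handles it via Proposition~\ref{prop:genusone}.)
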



  Thanks to \cite{NamikawaNote}, when a symplectic singularity has singular locus of codimension at least four, it has terminal singularities. On the other hand, it is well known that a singular $\mathbb{Q}$-factorial terminal variety does not admit a proper crepant resolution, and hence not a symplectic one: see, e.g., the proof of \cite[Theorem 6.13]{BS-srqv}.  Thus,
  Theorem \ref{thm:slsqfactor} and Proposition \ref{p:codim} immediately imply:
\begin{cor}\label{cor:slres}
  Assume $\G$ is nonabelian, $g > 1$ and, if $g=2$, then no simple quotient of $\G$ is isomorphic to $\PGL_2$.
  Then $\GCharg{\G}$  does not admit a proper symplectic resolution. The same holds for any singular open subset. 
\end{cor}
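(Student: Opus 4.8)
The plan is to derive the corollary formally from the results already established---Theorem~\ref{thm:slsymp}, Theorem~\ref{thm:slsqfactor} and Proposition~\ref{p:codim}---together with the well-known obstruction to crepant resolutions coming from terminal $\mathbb{Q}$-factorial singularities. So almost all of the content has been front-loaded into those statements, and what remains is a short deduction.

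First, I would record that, under the hypotheses of the corollary, Proposition~\ref{p:codim} gives that $\GCharg{\G}$ is singular and that its singular locus has codimension at least four. Since Theorem~\ref{thm:slsymp} shows $\GCharg{\G}$ has symplectic singularities in the sense of Beauville, I would then invoke \cite{NamikawaNote}: a variety with symplectic singularities whose singular locus has codimension $\geq 4$ has terminal singularities. Combined with the $\mathbb{Q}$-factoriality of Theorem~\ref{thm:slsqfactor}, this exhibits $\GCharg{\G}$ as a singular, $\mathbb{Q}$-factorial, terminal variety.

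Next I would recall that every symplectic resolution is crepant: the symplectic form trivializes the canonical class on both the resolution $Y$ and, by the very definition of symplectic singularities, on $\GCharg{\G}$, so the relative canonical divisor vanishes. But a singular $\mathbb{Q}$-factorial terminal variety admits no proper crepant resolution---terminality forces such a resolution to be small, and a small proper birational morphism onto a $\mathbb{Q}$-factorial variety is an isomorphism, contradicting singularity. This is exactly the argument written out in the proof of \cite[Theorem~6.13]{BS-srqv}, which I would cite rather than reproduce. Hence $\GCharg{\G}$ has no proper symplectic resolution.

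Finally, for the statement about open subsets, I would note that $\mathbb{Q}$-factoriality, terminality, and the property of a $2$-form being symplectic all localize: a singular open subvariety $U \subseteq \GCharg{\G}$ is again singular, $\mathbb{Q}$-factorial and terminal, and a symplectic resolution of $U$ would restrict the symplectic structure accordingly, so the same argument applies verbatim. There is no genuine obstacle internal to this proof; the one point I would double-check is that Theorem~\ref{thm:slsqfactor} is being used only for its $\mathbb{Q}$-factoriality conclusion, which holds in all cases, and not for the stronger local factoriality it asserts only in a restricted range---$\mathbb{Q}$-factoriality already suffices for the ``small morphism is an isomorphism'' step.
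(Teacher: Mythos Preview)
Your proof is correct and follows exactly the same route as the paper: combine Theorem~\ref{thm:slsymp}, Theorem~\ref{thm:slsqfactor}, and Proposition~\ref{p:codim} with \cite{NamikawaNote} to get a singular $\mathbb{Q}$-factorial terminal variety, then invoke the standard obstruction (as in \cite[Theorem~6.13]{BS-srqv}) to crepant resolutions. Your treatment is in fact slightly more explicit than the paper's, since you spell out why symplectic resolutions are crepant and why the argument localizes to singular open subsets.
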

In particular, the above implies that, for $G=\SL_n, \GL_n$, or $\PGL_n$, then the character variety $\GCharg{\G}$ does not admit a symplectic resolution when $n,g \geq 2$ and $(n,g) \neq (2,2)$. In these cases we give a description of all the symplectic leaves, and hence the stratification of the singular locus into iterated singular loci (all even-dimensional).

\begin{remark}
  In the case $\G=\GL_n$, parallel to Remark 1.14 of \cite{BS-srqv}, we can give an alternative proof of the first statement of Corollary \ref{cor:slres} using formal localization, reducing to the quiver variety case which is considered in that article. The formal
neighborhood of the identity of $\GCharg{\GL_n}$ is well-known to identify with the formal neighborhood of $(0,\ldots,0)$ in the quotient 
$$
\left\{ (X_1,Y_1, \ds, X_g,Y_g) \in \mathfrak{gl}_n
\ \left| \ \sum_{i = 1}^d [X_i,Y_i] = 0 \right. \right\} \git \, \GL_n.
$$
See Proposition \ref{p:formal-neighborhood} for more details.
The above quotient is
the quiver variety associated to the quiver $Q$ with
one vertex and $g$ arrows. However, we cannot directly conclude Theorem \ref{thm:slsqfactor} using formal
localization, and neither the stronger last statement of Corollary
\ref{cor:slres}.
\end{remark}
\begin{remark}
  As in the discussion after Corollary 1.11 of \cite{BS-srqv}, one can obtain singular open subsets $U \subseteq \GCharg{\GL_n}$ in the case $g > 1$ and $(n,g) \neq (2,2)$ for which the formal neighborhood of every point does admit a resolution, even though the entire $U$ does not admit one by Corollary~\ref{cor:slres}. Indeed, as mentioned in the preceding remark, the formal neighborhoods identify with those of quiver varieties. Hence one example is given analogously to the one in \cite{BS-srqv} (after Corollary 1.11 therein):
  for $n=2$ and $g \geq 3$, take $U$ to be the complement
  of the locus of representations of the form $Y^{\oplus 2}$ for $Y$
  one-dimensional (and hence irreducible).  This locus is singular, terminal, and locally factorial, but it is not formally locally $\Q$-factorial.
\end{remark}

\subsection{Construction of symplectic resolutions}
As the preceding has made clear, in most cases there will not be a symplectic resolution.  Let us explain how to construct the ones that do exist.

First suppose that the genus is one.  For the general linear group,
$\GChar{\GL_n}{1}$ is well known to be isomorphic to the symmetric product
$S^n(\Cs \times \Cs):= (\Cs \times \Cs)^n/S_n$ (see, e.g., \cite{JosephChevalley},\cite{AlmostCommutingVariety}, as explained in Section \ref{sec:genus1} below).
The Hilbert scheme produces a symplectic resolution,
$\Hilb^n(\Cs \times \Cs) \to \GChar{\GL_n}{1}$, by the Hilbert--Chow map. That $\Hilb^n(\Cs \times \Cs)$ is symplectic, owing to the fact that $\Cs \times \Cs$ is a holomorphic symplectic surface, was observed in \cite{HilbertSymplectic}. For $\SL_n < \GL_n$, we have $\GChar{\SL_n}{1} \subset \GChar{\GL_n}{1}$ and the restriction of the Hilbert--Chow morphism to the preimage of $\GChar{\SL_n}{1}$ is a symplectic resolution of the latter. This preimage is the smooth symplectic subvariety $\Hilb^n_0(\Cs \times \Cs) \subseteq \Hilb^n(\Cs \times \Cs)$, of schemes of length $n$ in $\Cs \times \Cs$ such that the product of all pairs in the support is $(1,1)$.  See Section \ref{sec:genus1} for more details on these facts.

Similarly, we can produce a resolution for the group $\PGL_2$: here one obtains that $\GChar{\PGL_2}{1} \cong \GChar{\SL_2}{1}/\Z_2^2$, which allows us to produce a partial resolution via $\Hilb^2_0(\Cs \times \Cs)/\Z_2^2$, and finally blow it up to produce a symplectic resolution.  

Finally, for genus at least two, there is one exceptional case where we have a resolution, namely for $\GChar{\GL_2}{2}$, and hence also for $\GChar{\SL_2}{2}$: in these cases a resolution was constructed in \cite{LSOGrady} (in a slightly different context). Again, in this case it turns out one only needs to blow up the reduced singular locus. Put together, we obtain the following:

\begin{thm} \label{t:constr-res}
  \begin{itemize}
  \item[(i)] For $g=1$, we have the projective symplectic resolutions
    \begin{equation}
      \Hilb^n(\Cs \times \Cs) \to \GChar{\GL_n}{1}, \quad \Hilb^n_0(\Cs \times \Cs) \to \GChar{\SL_n}{1},
    \end{equation}
    given by the Hilbert--Chow morphism, and
    \begin{equation}
      \widetilde{\Hilb^2_0(\Cs \times \Cs)/\Z_2^2} \to \GChar{\PGL_2}{1},
    \end{equation}
    given by blowing up the (reduced) singular locus (type $A_1$ surface singularities only).
  \item[(ii)] For $g=2$, we have the symplectic resolutions
    \begin{equation}
      \widetilde{\GChar{\GL_2}{2}} \to \GChar{\GL_2}{2}, \quad \widetilde{\GChar{\SL_2}{2}} \to \GChar{\SL_2}{2},
    \end{equation}
    obtained by blowing up the reduced singular locus.
  \end{itemize}
\end{thm}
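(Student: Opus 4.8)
The plan is to treat the three cases separately, in each case first identifying the character variety with a concrete Poisson (in fact symplectic-singularity) variety and then exhibiting a crepant resolution which, since the varieties have symplectic singularities by Theorem \ref{thm:slsymp}, is automatically a symplectic resolution (crepancy of a resolution of a variety with symplectic singularities forces the pulled-back symplectic form to extend). So in each case the two things to check are: (a) the stated identification of the variety, and (b) that the stated map is a resolution and is crepant.

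For part (i), genus one: I would first recall the standard fact that $\GChar{\GL_n}{1}$ is the GIT quotient of the commuting variety $\{(X,Y) \in \GL_n \times \GL_n : XY = YX\}$ by simultaneous conjugation, and that a pair of commuting semisimple elements is simultaneously diagonalizable, giving the isomorphism with $S^n(\Cs \times \Cs)$ (citing \cite{JosephChevalley}, with the section reference to \ref{sec:genus1} where this is spelled out). Then $\Hilb^n(\Cs \times \Cs) \to S^n(\Cs \times \Cs)$ is the Hilbert--Chow morphism; it is a projective resolution, and since $\Cs \times \Cs$ is a holomorphic symplectic surface, $\Hilb^n$ carries a symplectic form (\cite{HilbertSymplectic}) restricting to Goldman's on the smooth locus, so it is a symplectic resolution. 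For $\SL_n$, the determinant map $\GL_n^2 \to \Cs^2$ is equivariant and Poisson, and $\GChar{\SL_n}{1}$ is the fiber over $(1,1)$; correspondingly $\Hilb^n_0(\Cs \times \Cs)$, the preimage of $(1,1)$ under $\Hilb^n(\Cs\times\Cs)\to\Cs^2$ (product of support points), is a smooth symplectic subvariety and the restricted Hilbert--Chow map resolves $\GChar{\SL_n}{1}$. For $\PGL_2$, I would use the isogeny $\SL_2 \to \PGL_2$ to get $\GChar{\PGL_2}{1} \cong \GChar{\SL_2}{1}/\Z_2^2$ (the $\Z_2^2 = (\Z/2)^{2g}$ acting via the $2$-torsion of the maximal torus, i.e.\ tensoring the two group elements by $\pm 1$), check that the quotient $\Hilb^2_0(\Cs\times\Cs)/\Z_2^2$ has only $A_1$ surface singularities along its (smooth, codimension-two) singular locus — this is a local computation of the stabilizers of the $\Z_2^2$-action on the smooth symplectic fourfold $\Hilb^2_0$ — and conclude that blowing up the reduced singular locus gives the minimal (hence symplectic, by the surface $A_1$ story fibered over a symplectic base, or by the general fact that a crepant resolution of a symplectic singularity with codimension-two $A_1$ singularities is obtained this way) resolution.

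For part (ii), genus two with $\G = \GL_2, \SL_2$: here the input is that a symplectic resolution of (the relevant variety, which is O'Grady-type) was constructed by Lehn--Sorger / O'Grady, cited as \cite{LSOGrady}; I would identify $\GChar{\SL_2}{2}$ (resp.\ $\GChar{\GL_2}{2}$) with the singular moduli space appearing there — concretely, the character variety of the genus-two surface for $\SL_2$ is closely related to a moduli space of sheaves on an abelian or K3 surface whose O'Grady-type desingularization by blowing up the reduced singular locus is known — and then invoke that construction, checking that the blow-up of the reduced singular locus is crepant and hence symplectic. Again crepancy plus symplectic singularities (Theorem \ref{thm:slsymp}) upgrades this to a symplectic resolution, and one checks compatibility with Goldman's form on the common smooth locus. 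The $\GL_2$ case follows from the $\SL_2$ case together with the torus factor, essentially because $\GChar{\GL_2}{2}$ is (a quotient/cover related to) $\GChar{\SL_2}{2} \times (\Cs \times \Cs)^{2}$ up to the finite $2$-torsion group, and the resolution is built compatibly.

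The main obstacle I expect is not the Hilbert-scheme cases (which are classical) but rather pinning down precisely the isomorphisms in the $\PGL_2$, $g=1$ and the $\G = \GL_2,\SL_2$, $g=2$ cases with the varieties to which \cite{LSOGrady} (and the surface-singularity analysis) apply, and verifying that ``blow up the reduced singular locus'' really does yield a \emph{crepant} — not merely a resolution — morphism in the $g=2$ case. Once crepancy is in hand, Theorem \ref{thm:slsymp} does the rest, so the verification of crepancy (equivalently, that the exceptional divisor is not discrepant, e.g.\ via an explicit local model of the singularity along its codimension-two — in the O'Grady case, codimension-two — singular stratum) is the crux of the argument.
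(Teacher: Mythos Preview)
Your approach to part (i) is essentially that of the paper: use Proposition~\ref{prop:genusone} to identify the genus-one character varieties with symmetric products, resolve via Hilbert--Chow, and for $\PGL_2$ check that the $\Z_2^2$-action on $\Hilb^2_0(\Cs \times \Cs)$ has only isolated fixed points with stabilizer $\Z_2$ (the paper does this by an explicit enumeration of the fixed points of each nontrivial element).

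For part (ii), however, there are two genuine gaps. First, there is no global identification of $\GChar{\GL_2}{2}$ or $\GChar{\SL_2}{2}$ with an O'Grady-type moduli space of sheaves: the character varieties are affine while those moduli spaces are projective, so ``identify with the singular moduli space appearing there'' cannot be carried out as stated. What the paper actually does is identify the \emph{formal} (hence, by Artin approximation, analytic) neighborhood of each singular point with the formal neighborhood of $0$ in a Nakajima quiver variety, using that $\C[\pi]$ is a $2$-Calabi--Yau algebra (\cite{BGV-NHsCY2}, \cite{Kaplan-Schedler-mpa}). These local models are $\C^8 \times (\C^2/\Z_2)$ along the codimension-two stratum $C_{(1,1;1,1)}$ and $\C^4 \times \mc{N}$ (with $\mc{N}$ a nilpotent orbit closure in $\mf{sp}(4)$) along the deepest stratum $C_{(2,1)}$; one then invokes the result of \cite{KaledinLehn}, \cite{LSOGrady} that blowing up the reduced singular locus of $\mc{N}$ yields a semi-small resolution.

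Second, your plan to deduce that the resolution is symplectic from crepancy plus Theorem~\ref{thm:slsymp} is circular in this case: the $(n,g)=(2,2)$ instance of Theorem~\ref{thm:slsymp} is proved in the paper \emph{via} Corollary~\ref{cor:22symp}, i.e.\ by first constructing the symplectic resolution, because Flenner's theorem is unavailable when the singular locus has codimension two. The paper instead argues directly: the blowup is smooth (checked on the local models), the map is semi-small, the pulled-back form extends over the preimage of the $A_1$-stratum $C_{(1,1;1,1)}$, and then extends over the remaining exceptional locus since semi-smallness forces it to have codimension $\geq 3$.
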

The theorem will be proved in Sections \ref{ss:22} and \ref{sec:genus1} below. In Section \ref{ss:min} below, we explain the closest approximation to such a resolution that exists in general.


\subsection{Classification of symplectic resolutions}\label{ss:class}
Many of the preceding results (in particular Theorems \ref{thm:slsymp} and \ref{thm:slsqfactor}) reduce easily to the case of general and special linear groups, handled in Section \ref{s:gl-sl} below.
We
now explain how to strengthen these results to a full classification
of character varieties $\GCharg{\G}$ admitting symplectic
resolutions, for $\G$ a (necessarily connected) reductive group whose simple quotients are all of type $A$.
The first step is to reduce to the case where $\G$ is semi-simple. Here ``semi-simple'' means reductive with finite center.

Note that a general group $\G$ as above has the form:
$$
1 \to Z \to H \times K \to \G \to 1, 
$$
where $H$ is a torus, $K$ is a semi-simple connected group and $Z < H \times Z(K)$ finite. Moreover, $K$ can be chosen so that the the composition $Z \to H \times K \to H$ is
injective.
Since $H$
is central, this implies that $Z^{2g}$ acts freely on
$\GChar{H \times K}{g} = \GChar{H}{g} \times \GChar{K}{g}$. We have $\GCharg{\G} \cong \GCharg{H \times K}/Z^{2g}$ (see Section \ref{ss:fin-quotient} below), and the two
have the same singularities.  So for most of our purposes, we can
replace $\G$ by $K$.

In these terms our main result on symplectic resolutions takes the following general form:
\begin{thm}\label{t:class-res} 
	Assume that $\G$ is nonabelian. The following are equivalent:
  \begin{itemize}
  \item[(a)] The character variety $\GCharg{\G}$ admits a projective symplectic resolution.
  \item[(b)] The character variety $\GCharg{K}$ admits a projective symplectic resolution.
  \item[(c)] One of the following two conditions holds:
    \begin{itemize}
    \item[(i)] $g=1$ and $K \cong \prod_j \SL_{n_j} \times \PGL_2^m$ for some $m \geq 0$;
    \item[(ii)] $g=2$ and $K \cong \SL_2^m$ for some $m \geq 1$.
    \end{itemize}
\end{itemize}
\end{thm}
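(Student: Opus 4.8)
The plan is to prove Theorem~\ref{t:class-res} by combining the reduction to the semi-simple case with the necessity and sufficiency arguments packaged in the earlier results, so that the only genuinely new work is the passage from $K$ a single simple factor to $K$ an arbitrary product, and the careful bookkeeping of the borderline genus-$1$ and genus-$2$ cases.

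\textbf{Step 1: equivalence of (a) and (b).}
First I would establish the reduction $\GCharg{\G} \cong \GCharg{H \times K}/Z^{2g} \cong (\GChar{H}{g} \times \GCharg{K})/Z^{2g}$ with $Z^{2g}$ acting freely (this is asserted in the introduction, with details promised in Section~\ref{ss:fin-quotient}). Since $\GChar{H}{g} \cong H^{2g}$ is a smooth symplectic torus, a projective symplectic resolution of $\GCharg{K}$ yields one of the product, equivariantly for the free $Z^{2g}$-action (one may average or use the canonical functoriality of blow-ups of the singular locus), and hence descends to one of the quotient; conversely a symplectic resolution of the quotient pulls back, along the \'etale cover, to one of $\GChar{H}{g} \times \GCharg{K}$, and restricting to a slice $\{*\} \times \GCharg{K}$ (or using that a product admits a symplectic resolution iff each factor does, cf.\ the analogous statement for symplectic singularities) gives one of $\GCharg{K}$. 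This gives (a)$\Leftrightarrow$(b), and reduces everything to $\G = K$ semi-simple.

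\textbf{Step 2: (c)$\Rightarrow$(b).}
For the two families in (c) I would invoke Theorem~\ref{t:constr-res}: in genus $1$ we have explicit resolutions $\Hilb^n_0(\Cs \times \Cs) \to \GChar{\SL_n}{1}$ and the blow-up resolution of $\GChar{\PGL_2}{1}$, and in genus $2$ the blow-up resolution of $\GChar{\SL_2}{2}$. For a product $K = \prod_i K_i$ of such factors, $\GCharg{K} = \prod_i \GChar{K_i}{g}$, and a product of symplectic resolutions is a projective symplectic resolution; so (c)(i) and (c)(ii) each imply (b).

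\textbf{Step 3: (b)$\Rightarrow$(c), the main obstacle.}
This is where the work lies. Write $K = \prod_i K_i$ with each $K_i$ simple of type $A$, i.e.\ $K_i$ a product-free factor isomorphic to $\SL_{n_i}$ or $\PGL_{n_i}$ (more precisely $K$ is an isogeny quotient of $\prod \SL_{n_i}$). If $\GCharg{K}$ admits a projective symplectic resolution, then so does a formal/\'etale neighborhood of any point, and in particular so does each factor $\GChar{K_i}{g}$ — because a transverse slice to a suitable leaf in the product recovers $\GChar{K_i}{g}$ up to a smooth factor, and symplectic resolutions restrict to transverse slices (this is the standard local argument, as in \cite{Kaledinsympsingularities, BS-srqv}). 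Now Corollary~\ref{cor:slres} kills $g \geq 3$ entirely, and kills $g = 2$ unless every simple quotient is $\PGL_2$-free and $\dots$ — wait, more carefully: Corollary~\ref{cor:slres} shows no singular $\GChar{K_i}{g}$ with $g>1$, $(n_i,g)\neq(2,2)$ admits a symplectic resolution, so for $g=2$ we are forced to $K_i \cong \SL_2$ for all $i$ (the case $\PGL_2$, $g=2$ is excluded by Proposition~\ref{p:codim} since its singular locus has codimension $<4$ yet it is $\Q$-factorial — one must check $\PGL_2$ in genus $2$ separately, using Theorem~\ref{thm:slsqfactor}'s $\Q$-factoriality plus the codimension computation, to see it is terminal and $\Q$-factorial hence admits no crepant resolution), giving (c)(ii). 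For $g=2$ one must also rule out $K_i = \SL_{n_i}$ with $n_i \geq 3$ and $\GL$-type factors, again via Corollary~\ref{cor:slres}. For $g=1$: here $\GChar{K_i}{1}$ is a finite quotient of $S^{n_i}(\Cs\times\Cs)$-type varieties, and I would show that among the simple type-$A$ groups, only $\SL_{n}$ (all $n$) and $\PGL_2$ give quotients admitting symplectic resolutions, by checking that $\PGL_n$ for $n \geq 3$ fails — the natural candidate obstruction being that $\GChar{\PGL_n}{1} \cong \GChar{\SL_n}{1}/\Z_n^2$ has, for $n \geq 3$, either a non-$\Q$-factorial issue resolved the wrong way or, more likely, fails the ``symplectic quotient singularity admits a resolution'' criterion (the relevant symplectic reflection / wreath-type obstruction of Verbitsky and Bellamy for $\Z_n \wr$-type actions, which forces $n=2$); this last point about $\PGL_n$, $g=1$, $n\geq 3$ is the step I expect to be the real obstacle, as it requires identifying the local model at the deepest stratum as a symplectic quotient singularity $V/\Gamma$ and citing the classification of which such admit symplectic resolutions to conclude $\Gamma$ must be generated by symplectic reflections in a way only possible for $n \leq 2$. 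Assembling: if $K$ admits a resolution then $g \in \{1,2\}$, each $K_i$ is $\SL_{n_i}$ or (only if $g=1$) $\PGL_2$, and if $g=2$ each $K_i = \SL_2$ — which is exactly (c). Finally (c)$\Leftrightarrow$(a) follows by composing with Step~1.

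\textbf{Remark on organization.}
In the write-up I would state at the outset that (a)$\Leftrightarrow$(b) is Step~1 and purely formal, cite Theorem~\ref{t:constr-res} for (c)$\Rightarrow$(b), and devote the bulk of the proof to (b)$\Rightarrow$(c), structured as: (1) reduce to simple factors by transverse slices; (2) apply Corollary~\ref{cor:slres} to eliminate $g \geq 3$ and most of $g = 2$; (3) handle the finitely many remaining simple groups in $g = 1, 2$ by hand, the $\PGL_n$ ($n \geq 3$, $g=1$) case via the symplectic-quotient-singularity obstruction being the crux.
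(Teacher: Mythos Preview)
Your overall architecture (reduce to semi-simple, construct resolutions when (c) holds, obstruct otherwise) matches the paper, but Step~3 has two genuine gaps that the paper's proof avoids by taking a different route.

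\textbf{Gap 1: $K$ is not a product of simple groups.} You write ``$K = \prod_i K_i$ with each $K_i$ simple'' and then argue via transverse slices to factors. But a semi-simple $K$ is only an isogeny quotient $K_0/C_0$ with $K_0 = \prod_i \SL_{n_i}$ and $C_0$ an \emph{arbitrary} subgroup of the center; there is no reason for $C_0$ to split as a product over the factors, and correspondingly $\GCharg{K} = \GCharg{K_0}/C_0^{2g}$ is not a product of character varieties of simple groups. So there are no ``factors $\GChar{K_i}{g}$'' to slice to, and the reduction to simple factors collapses. Condition (c) is precisely a constraint on the isogeny type of $K$ (e.g.\ it excludes $\SL_4/\Z_2$ in genus one), and your argument never engages with this.

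\textbf{Gap 2: the $\PGL_2$, $g=2$ case.} You claim one shows it is ``terminal and $\Q$-factorial hence admits no crepant resolution'', but Proposition~\ref{p:codim} says the singular locus has codimension exactly two here, so $\GChar{\PGL_2}{2}$ is \emph{not} terminal, and the obstruction you invoke does not apply to it directly. The same issue arises for any $K$ with a $\PGL_2$ quotient in genus two, and (dually) for the $\PGL_n$, $n\geq 3$, genus-one cases you flag as ``the crux'': none of these are terminal as they stand.

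The paper circumvents both problems by a different mechanism. Rather than obstructing $\GCharg{K}$ itself, it constructs an explicit $\Q$-factorial terminalization $\widetilde{\GCharg{K}}$ (Theorem~\ref{t:rmm}): take the product of the known resolutions of each $\GCharg{\SL_{n_i}}$ (or leave the factor alone if already $\Q$-factorial terminal), then quotient by the lifted $C_0^{2g}$-action (or $C_1^{2g}$ in genus one, after first passing to any $\PGL_2$ factors that split off). The codimension estimates of Propositions~\ref{prop:fixedsmoothcodim4} and~\ref{p:codim-genus1}, transported to the resolution via Corollary~\ref{c:fix-dim}, guarantee this quotient is terminal; it is $\Q$-factorial as a finite quotient of a $\Q$-factorial variety. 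Whenever (c) fails, this terminalization is still singular. The paper then shows (Proposition~\ref{p:formal-neighborhood}, Lemma~\ref{lem:nonformalresolveYGg}) that the formal local cone at such a singular point is itself $\Q$-factorial terminal, and invokes \cite[Theorem 2.2]{BSnon} to conclude that $\GCharg{K}$ admits no projective symplectic resolution. This handles all isogeny types uniformly and never needs $\GCharg{K}$ itself to be terminal.
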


In the cases in part (c), a resolution can be constructed thanks to Theorem \ref{t:constr-res}. Namely, by taking products, we get a resolution $\widetilde{\GCharg{K}} \to \GCharg{K}$, and then $\bigl(\GCharg{H} \times \widetilde{\GCharg{K}})/Z^{2g}$ produces a resolution of $\GCharg{G}$.

We first prove these results for $\G = \GL_n$ or $\SL_n$, and then in Section~\ref{sec:generaltypeAchar} we deduce the results for arbitrary type $A$ reductive groups. Similar techniques are applicable to Hitchin's moduli spaces of semistable Higgs bundles over smooth projective curves; see \cite{Tir-srHms}.

Our classification of character varieties of type $A$ admitting symplectic resolutions has recently been used in \cite{PWSympres} to prove the $P=W$ conjecture in all cases where a symplectic resolution exists. 

\subsection*{Conventions}

Throughout, a variety will mean a reduced, quasi-projective scheme of finite type over $\C$. If $X$ is a (quasi-projective) variety equipped with the action of a reductive algebraic group $\G$, then $X \git \, \G$ will denote the good quotient (when it exists). In this case, let $\xi: X \rightarrow X \git \, \G$ denote the quotient map. Then each fibre $\xi^{-1}(x)$ contains a unique closed $\G$-orbit. Following Luna, this closed orbit is denoted $T(x)$.  

\subsection*{Acknowledgments}

The first author was partially supported by EPSRC grant EP/N005058/1.
The second author was partially supported by NSF Grant DMS-1406553.
The authors are grateful to the University of Glasgow for the
hospitality provided during the workshop ``Symplectic representation
theory'', where part of this work was done, and the second author to
the 2015 Park City Mathematics Institute as well as to the Max Planck
Institute for Mathematics for excellent working environments.

We would like to thank Victor Ginzburg for suggesting we
consider character varieties. We would also like to thank David
Jordan, Johan Martens and Ben Martin for their many explanations
regarding character varieties.

Finally, we would like to thank the referee for a detailed reading of the paper and several suggestions that have improved the exposition. 

\section{Character varieties for general and special linear groups}\label{s:gl-sl}

Recall from the introduction that $\Sigma$ is a compact Riemannian surface of genus $g > 0$ and $\pi$ is its fundamental group. For a connected reductive group $\G$ we have defined the character varieties
$$
\GCharg{\G} = \Hom(\pi,\G)^{\circ} \git \, \G, 
$$
where $\Hom(\pi,\G)^{\circ} \subset \Hom(\pi,\G)$ is the connected component containing the trivial representation. These are affine varieties. By \cite{LiCharacter}, $\Hom(\pi,\G)^{\circ} = \Hom(\pi,\G)$ for both $\GL_n$ and $\SL_n$.

\subsection{Quasi-Hamiltonian reduction} In this section
we will only consider $\G = \GL_n$ or $\SL_n$. If $g > 1$ then the (complex) dimension of $\GCharg{\SL_n}$ is $2 (g-1)(n^2 - 1)$, and when $g = 1$, it has dimension $2(n-1)$. On the other hand $\dim \GCharg{\GL_n} = \dim \GCharg{\SL_n} +2g$ always. We begin by recalling the basic properties of the affine varieties $\Hom(\pi,\GL)$ and $\GCharg{\GL_n}$:

\begin{thm}\cite[\S 11]{Sim-mrfgspv2}, \cite{Richardsonntuples}.
\label{thm:characterproperties}
	Assume $g > 1$. \label{thm:Etingofsum} 
\begin{enumerate} 
\item Both $\Hom(\pi,\GL_n)$ and $\GCharg{\GL_n}$ are reduced, irreducible, and normal. \label{thm:Etingofsum:irr}
\item $\Hom(\pi,\GL_n)$ is a complete intersection in $\GL_n^{2g}$. 
\item The generic points of $\Hom(\pi,\GL_n)$ and $\GCharg{\GL_n}$ correspond to irreducible representations of the fundamental group $\pi$. 
\end{enumerate}
\end{thm}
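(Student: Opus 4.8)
The plan is to follow the approach of \cite{Richardsonntuples} and \cite[\S 11]{Sim-mrfgspv2}, working directly with $\GL_n$ and establishing all three assertions together, by induction on $n$. Fix $\pi=\langle a_1,b_1,\dots,a_g,b_g\mid\prod_i[a_i,b_i]\rangle$ and the commutator map $\mu\colon\GL_n^{2g}\to\SL_n$, $\mu(\underline A,\underline B)=\prod_i[A_i,B_i]$, so that $\Hom(\pi,\GL_n)=\mu^{-1}(I)$ is cut out, locally in the smooth variety $\GL_n^{2g}$, by $\dim\SL_n=n^2-1$ equations; the ``expected'' dimension is therefore $d:=2gn^2-(n^2-1)=(2g-1)n^2+1$, which matches $\dim\GCharg{\GL_n}+\dim\PGL_n$. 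The base case $n=1$ is trivial, $\Hom(\pi,\Cs)=(\Cs)^{2g}$ being a smooth torus.

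\emph{Step 1 (local structure).} For $\rho\in\Hom(\pi,\GL_n)$, identify $T_\rho\Hom(\pi,\GL_n)=Z^1(\pi,\mathfrak{gl}_n)$ and $\coker(d\mu_\rho)=H^2(\pi,\mathfrak{sl}_n)$, with the adjoint $\pi$-action through $\rho$. Poincar\'e duality on the closed oriented surface $\Sigma$ gives $H^2(\pi,\mathfrak{sl}_n)\cong(\mathfrak{sl}_n^{\Ad\rho})^\ast$, and $\chi(\Sigma)=2-2g$ yields $\dim Z^1(\pi,\mathfrak{gl}_n)=\dim\End_\pi(\rho)+(2g-1)n^2=d+\dim\mathfrak{sl}_n^{\Ad\rho}$. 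Hence $\mu$ is submersive at $\rho$ — equivalently $\rho$ is a smooth point of $\mu^{-1}(I)$, of local dimension $d$ — if and only if $\End_\pi(\rho)=\C$, i.e. $\rho$ is a \emph{brick} (in particular, whenever $\rho$ is irreducible). Let $\Hom^{\mathrm{br}}$ denote this open, smooth locus of pure dimension $d$, and $\Hom^{\mathrm{sing}}$ its complement; $\Hom^{\mathrm{br}}\neq\varnothing$ because $g\geq2$ guarantees irreducible representations of $\pi$ exist.

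\emph{Step 2 (dimension estimates; the crux).} One must show: (a) the reducible locus $\Hom^{\mathrm{red}}$ has dimension $<d$; and (b) $\Hom^{\mathrm{sing}}$ has dimension $\leq d-2$. One stratifies by a proper maximal parabolic $P\subsetneq\GL_n$ through which $\rho$ factors, so that $\Hom^{\mathrm{red}}=\bigcup_P\GL_n\cdot\Hom(\pi,P)$ with $\dim\GL_n\cdot\Hom(\pi,P)\leq\dim\GL_n/P+\dim\Hom(\pi,P)$; and $\Hom(\pi,P)$ fibres over $\Hom(\pi,L)$ ($L$ the Levi, whose dimension is known by induction) with fibres $Z^1(\pi,U_P)$ for the unipotent radical $U_P$. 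Since $\dim Z^1(\pi,U_P)=\dim H^0(\pi,U_P)+(2g-1)\dim U_P$ and $H^0(\pi,U_P)=\Hom_\pi(\text{top quotient},\text{sub})$ vanishes on a dense open of $\Hom(\pi,L)$, this gives (a), with room to spare once $g\geq2$. For (b) one splits off the decomposable representations — handled as in (a) with $L$ a proper Levi, costing codimension $\geq3$ — from the indecomposable non-bricks; the latter possess a square-zero endomorphism, hence are reducible with $H^0(\pi,U_P)=\Hom_\pi(\text{top quotient},\text{sub})\neq0$, so they lie in a proper closed sublocus of each reducible stratum. Weighing how much $\dim Z^1(\pi,U_P)$ can jump on that sublocus against how far the base locus shrinks — essentially, that the $\Hom_\pi$-jumping loci have codimension at least the jump — yields codimension $\geq2$; this is the one genuinely delicate estimate, and it is sharp precisely at $(n,g)=(2,2)$, where $\Hom^{\mathrm{red}}$ already has codimension only $1$ while $\Hom^{\mathrm{sing}}$ has codimension $3$. (The parallel count on $\GCharg{\GL_n}$ itself, where the reducible locus is plainly of codimension $\geq2$ for $g\geq2$, is a useful benchmark.)

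\emph{Step 3 (assembly).} By Krull's principal ideal theorem every irreducible component $C$ of $\Hom(\pi,\GL_n)=\mu^{-1}(I)$ has $\dim C\geq d$; by Step 2(b) no $C$ lies in $\Hom^{\mathrm{sing}}$, so each has a point in $\Hom^{\mathrm{br}}$; since $\Hom^{\mathrm{br}}$ is open, the generic point of $C$ then lies in $\Hom^{\mathrm{br}}$, forcing $\dim C=d$ and $C$ generically reduced. Thus $\Hom(\pi,\GL_n)$ is equidimensional of dimension $d$, cut out in the smooth (hence Cohen--Macaulay) variety $\GL_n^{2g}$ by $n^2-1$ equations in the complementary codimension; those equations therefore form a regular sequence, $\Hom(\pi,\GL_n)$ is a Cohen--Macaulay complete intersection, and being generically reduced and without embedded components it is reduced. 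Its singular locus lies in $\Hom^{\mathrm{sing}}$ (Step 1), of codimension $\geq2$ by Step 2(b), so $R_1$ holds; with $S_2$ from Cohen--Macaulayness, Serre's criterion gives normality, and since $\Hom(\pi,\GL_n)=\Hom(\pi,\GL_n)^\circ$ is connected \cite{LiCharacter} it is a normal domain, hence irreducible, with generic point a brick — in fact an irreducible representation, these being dense by Step 2(a). Finally, for $\GCharg{\GL_n}=\Hom(\pi,\GL_n)\git\GL_n$: the $\GL_n$-invariants of the normal domain $\mathcal{O}(\Hom(\pi,\GL_n))$ again form a normal domain, so $\GCharg{\GL_n}$ is reduced, irreducible and normal, and its generic point corresponds to an irreducible representation since irreducible representations have closed $\GL_n$-orbits. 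The main obstacle is Step 2 — in particular obtaining codimension $\geq2$, not merely $\geq1$, for the singular locus; everything else is a formal consequence of that estimate together with the deformation computation of Step 1. (Alternatively, one could reduce normality of $\Hom(\pi,\GL_n)$ near each semisimple representation, via Luna's slice theorem, to normality of a local model of quiver-variety type, in the spirit of \cite{BS-srqv}.)
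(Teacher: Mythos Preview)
The paper does not supply a proof of this theorem: it is quoted verbatim from \cite[\S 11]{Sim-mrfgspv2} and \cite{Richardsonntuples} and used as a black box. Your proposal is therefore not competing with an in-paper argument but reconstructing the one in those references, and the overall scheme --- compute $d\mu$ via group cohomology to identify the smooth locus of $\mu^{-1}(I)$ with the bricks, bound the dimension of the non-brick locus by stratifying over proper parabolics, and then feed the resulting complete-intersection and codimension-two estimates into Serre's criterion together with Li's connectedness --- is correct and is exactly the Richardson--Simpson strategy. Step 3 is clean and correctly assembled.

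One minor correction in Step 2: the fibre-dimension formula should read
\[
\dim Z^1(\pi,U_P)=(2g-1)\dim U_P+\dim H^2(\pi,U_P),
\]
with $H^2$ rather than $H^0$ (from $\dim Z^1=\dim C^1-\operatorname{rk} d^1=2g\dim U_P-\dim U_P+\dim H^2$). Poincar\'e duality gives $H^2(\pi,U_P)\cong\Hom_\pi(\text{sub},\text{quotient})^\ast$, which is dual to, not equal to, $H^0(\pi,U_P)=\Hom_\pi(\text{quotient},\text{sub})$. Both vanish on the dense open of $\Hom(\pi,L)$ where the Levi pieces share no composition factor, so your estimate (a) is unaffected; and your later invocation of $H^0\neq 0$ for indecomposable non-bricks is correct as written. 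But the two formulae are in mild tension, and when you balance the jump in fibre dimension against the codimension of the jumping locus for (b) you will need the $H^2$ version. As you say, carrying out that balance carefully is the real content here; the references devote several pages to it, and your sketch points in the right direction without completing it.
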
  

As shown originally by Goldman \cite{GoldmanSymplectic}, the variety $X$ has a natural Poisson structure. This Poisson structure becomes clear in the realization of these spaces as quasi-Hamiltonian reductions; see \cite{Liegroupvaluedmoment}, where it is shown that the symplectic structure defined by Goldman on the smooth locus of $X$ agrees with the Poisson structure of $X$ as a quasi-Hamiltonian reduction. In particular, if $C_{(1,n)}$ denotes the dense open subset of $X$ parameterizing simple representations of $\pi$, then it is shown in \cite{Liegroupvaluedmoment} that the Poisson structure on $C_{(1,n)}$ is non-degenerate. 

It will be useful for us to reinterpret the quasi-Hamiltonian reduction $\GCharg{\GL_n}$ as a moduli space of semi-simple representations of the multiplicative preprojective algebra. Let $Q$ be the quiver with a single vertex and $g$ loops, labeled $a_1, \ds, a_g$. Let $a_i^*$ denote the loop dual to $a_i$ in the doubled quiver $\overline{Q}$. Associated to $Q$ is the multiplicative preprojective algebra $\Lambda(Q)$, as defined in \cite{CBShaw}. Namely, $\C \overline{Q} \rightarrow \Lambda(Q)$ is the universal homomorphism such that each $1 + a_i a^*_i$ and $1 + a_i^* a_i$ is invertible and 
$$
\prod_{i = 1}^g (1 + a_i a_i^*)(1 + a_i^* a_i)^{-1} = 1.
$$
Here the product is ordered. Following \cite{CBMonodromy}, let $\Lambda(Q)'$ denote the universal localization of $\Lambda(Q)$, where each $a_i$ is also required to be invertible. Let $(T^* \Rep(Q,n))^{\circ} $ denote the space of all $n$-dimensional representations $(A_i,A_i^*)$ of $\C \overline{Q}$ such that $1 + A_i A_i^*, 1 + A_i^* A_i$ and $A_i$ are invertible for all $i$. It is an open, $\GL_n$-stable affine subset of $T^* \Rep(Q,n)$. The action of $\GL_n$ on $(T^* \Rep(Q,n))^{\circ}$ is quasi-Hamiltonian, with multiplicative moment map 
$$
\Psi : \Rep(\Lambda(Q)',n) \rightarrow \GL_n, \quad (A_i,A_i^*) \mapsto \prod_{i = 1}^g (1 + A_i A_i^*)(1 + A_i^* A_i)^{-1}.
$$
As noted in Proposition 2 of \cite{CBMonodromy}, the category $\Lmod{\Lambda(Q)'}$ of finite dimensional $\Lambda(Q)'$-modules is equivalent to $\Lmod{\pi}$, in such a way that we have a $\GL$-equivariant identification
$$
\Psi^{-1}(1) \iso \Hom(\pi, \GL_n), \quad (A_i,A_i^*) \mapsto (A_i,B_i) = (A_i, A_i^{-1} + A_i^*). 
$$
Hence, we have an identification of Poisson varieties
$$
\Psi^{-1}(1) \git \, \GL_n = \GCharg{\GL_n}. 
$$
See \cite{DoublePoissonAlgebras} for further details. 

\subsection{Symplectic singularities}\label{ss:ss}

We recall from \cite[Definition 1.1]{Beauville} that a variety $X$ is said to be a symplectic singularity if it is normal, its smooth locus has a symplectic $2$-form $\omega$, and for any resolution of singularities $f \colon Y \rightarrow X$, the rational $2$-form $f^* \omega$ is regular. Moreover, $f$ is said to be a symplectic resolution if the $2$-form $f^* \omega$ is also non-degenerate. In particular, this makes $Y$ an algebraic symplectic manifold. 

The space $\GCharg{\GL_n}$ has a stratification by representation type, which is also the stratification by stabilizer type; see \cite[Theorem 5.4]{MarsdenWeinsteinStratification}. We say that a \textit{weighted partition} $\nu$ of $n$ is a sequence $(\ell_1, \nu_1; \ds ; \ell_k,\nu_k)$, where each $\ell_i$ and $\nu_i$ is a positive integer and
$$
\nu_1 \ge \nu_2 \ge \cdots, \quad \sum_{i = 1}^k \ell_i \nu_i = n.
$$ 

\begin{lem}\label{lem:stratadimChar}
Assume $n,g > 1$.  
\begin{enumerate}
\item The strata $C_{\nu}$ of $\GCharg{\GL_n}$ are labelled by weighted partitions of $n$ such that    
$$
\dim C_{\nu} = 2 \left( k + (g-1) \sum_{i = 1}^k \nu_i^2 \right).
$$ 
\item If $(n,g) \neq (2,2)$, then
  $\dim \GCharg{\GL_n} - \dim C_{\nu} \ge 4$ for all $\nu \neq (1,n)$.
\item If $(n,g) \neq (2,2)$ and $\nu \neq (1,n)$, then $\dim \GCharg{\GL_n}-\dim C_{\nu} \ge 8$ unless either 
\begin{itemize}
	\item[(i)] $(n,g)=(3,2)$ and $\nu=(1,2;1,1)$; or 
	\item[(ii)] $(n,g)=(2,3)$ and $\nu=(1,1;1,1)$.
\end{itemize}
\end{enumerate}
\end{lem}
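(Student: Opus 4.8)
The plan is to establish the dimension formula in part (1) first, and then obtain parts (2) and (3) as elementary arithmetic consequences.

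\textbf{Part (1): the dimension formula.} The strata $C_\nu$ come from the stratification by representation type described in \cite[Theorem 5.4]{MarsdenWeinsteinStratification}. A weighted partition $\nu = (\ell_1,\nu_1;\dots;\ell_k,\nu_k)$ should correspond to the locus of semisimple representations $\rho = \bigoplus_{i=1}^k \rho_i^{\oplus \ell_i}$ where $\rho_i$ is an irreducible representation of $\pi$ of dimension $\nu_i$, with the $\rho_i$ pairwise non-isomorphic. First I would recall (from the quasi-Hamiltonian/multiplicative preprojective picture, or directly) that the space of irreducible representations $\pi \to \GL_m$ up to conjugation has dimension $2 + 2(g-1)m^2$; equivalently, the component $C_{(1,m)} \subset \GChar{\GL_m}{g}$ of simple representations is dense of this dimension, consistent with $\dim \GChar{\GL_n}{g} = 2 + 2(g-1)n^2$ at $m=n$, $k=1$. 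Given this, a point of $C_\nu$ is specified by a choice of $k$ distinct points in the relevant moduli of irreducibles of the prescribed dimensions, and the stabilizer of such a semisimple point is $\prod_i \GL_{\ell_i}$ (up to the overall center, which is already accounted for in the ``$+2$'' per block contributing the $k$ in the formula — more precisely the determinant/central torus directions). Summing the dimensions of the $k$ moduli spaces of irreducibles gives $\sum_i \bigl(2 + 2(g-1)\nu_i^2\bigr) = 2k + 2(g-1)\sum_i \nu_i^2$, which is exactly the asserted $\dim C_\nu = 2\bigl(k + (g-1)\sum_{i=1}^k \nu_i^2\bigr)$. The main care needed here is bookkeeping the central directions and confirming that the formula is independent of the multiplicities $\ell_i$ (it depends only on the $\nu_i$ and on $k$), which matches the known fact that $C_{(1,n)}$ is the open dense stratum and recovers the global dimension. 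I would cite \cite{Sim-mrfgspv2}, \cite{Richardsonntuples}, or the multiplicative-quiver-variety dimension count for the key input about $\dim$ of irreducibles.

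\textbf{Parts (2) and (3): arithmetic.} Writing $d = \dim \GChar{\GL_n}{g} = 2 + 2(g-1)n^2$ and $\dim C_\nu = 2k + 2(g-1)\sum_i \nu_i^2$, the codimension is
\[
d - \dim C_\nu = 2(1-k) + 2(g-1)\Bigl(n^2 - \sum_{i=1}^k \nu_i^2\Bigr).
\]
Now one uses $n = \sum_i \ell_i \nu_i$, so $n^2 = \bigl(\sum_i \ell_i\nu_i\bigr)^2 \ge \sum_i \ell_i^2 \nu_i^2 \ge \sum_i \nu_i^2$, with a quantitative gap: in fact $n^2 - \sum_i \nu_i^2 = \sum_i(\ell_i^2-1)\nu_i^2 + 2\sum_{i<j}\ell_i\ell_j\nu_i\nu_j$. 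For part (2) one shows $2(g-1)(n^2 - \sum\nu_i^2) - 2(k-1) \ge 4$ whenever $\nu \ne (1,n)$ and $(n,g)\ne(2,2)$, i.e. $(g-1)(n^2-\sum\nu_i^2) \ge k+1$. When $g \ge 2$ this follows because $n^2 - \sum_i \nu_i^2$ dominates $k$ comfortably (a case split on whether some $\ell_i \ge 2$ versus all $\ell_i = 1$, $k \ge 2$ handles it); when $g=2$ one checks the bound is violated only in the excluded case $(n,g)=(2,2)$, $\nu=(2,1)$. For part (3), the inequality $d - \dim C_\nu \ge 8$ amounts to $(g-1)(n^2 - \sum\nu_i^2) \ge k+3$, and a finite search over the small cases where this fails — organized by the size of $n^2-\sum\nu_i^2$ and of $k$, again splitting on whether any $\ell_i\ge 2$ — leaves exactly the two listed exceptions $(n,g)=(3,2),\nu=(1,2;1,1)$ (codimension $6$) and $(n,g)=(2,3),\nu=(1,1;1,1)$ (codimension $6$), plus the already-excluded $(2,2)$ case.

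\textbf{Expected main obstacle.} The genuinely substantive step is part (1): correctly identifying the strata $C_\nu$ with the representation-type loci and pinning down their dimensions, especially getting the ``$2k$'' term right (the torus directions coming from the centers of the $\GL_{\ell_i}$-isotypic blocks, versus the rigidity of the distinct irreducible summands). Parts (2) and (3) are then purely a finite, if slightly tedious, verification; the only mild subtlety there is being systematic enough about the case analysis on the shape of $\nu$ (number of parts $k$, presence of repeated summands $\ell_i \ge 2$, and the small values of $n,g$) to be sure the exceptional list is complete. Once the formula in (1) is in hand, I would tabulate $d-\dim C_\nu$ for all $\nu$ with $n^2 - \sum\nu_i^2 \le 3$ (resp. $\le$ a slightly larger bound for the $g=2$ subtlety) to confirm no further exceptions arise.
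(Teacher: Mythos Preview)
Your approach is essentially the paper's: part (1) follows from the dimension $2+2(g-1)m^2$ of the moduli of irreducibles in each rank (the paper makes this explicit by writing $C_\nu$ as a product of open symmetric powers $S^{m_i,\circ} C_{(1,\nu_i)}$, but the dimension count is identical), and parts (2)--(3) are the same arithmetic on the codimension formula, with the paper organizing the case analysis in (3) via monotonicity moves on $\nu$ rather than a raw finite search. One concrete slip to fix: the weighted partition that actually violates the codimension-$4$ bound at $(n,g)=(2,2)$ is $\nu=(1,1;1,1)$ (codimension $2$), not $\nu=(2,1)$ (which has codimension $6$).
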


\begin{proof}
By Theorem \ref{thm:Etingofsum}, the set of points $C_{(1,n)}$ in $\GCharg{\GL_n}$ parameterizing irreducible representations of $\pi$ is a dense open subset contained in the smooth locus. Therefore $\dim C_{(1,n)} = 2(1 + n^2(g-1))$. An arbitrary semi-simple representation of $\pi$ of dimension $n$ has the form $x= x_1^{\oplus \ell_1} \oplus \cdots \oplus x_k^{\oplus \ell_k}$, where the $x_i$ are pairwise non-isomorphic irreducible $\pi$-modules of dimension $\nu_i$ and $n = \sum_{i = 1}^k \ell_i \nu_i$. Thus, the representation type strata correspond to weighted partitions of $n$. Let $C_{\nu}$ denote the locally closed subvariety of all such representations. Though the $x_i$ are pairwise distinct, there will generally exist $i,j$ for which $\nu_i = \nu_j$. Reordering if necessary, we write the multiset $\{ \! \{ \nu_1, \ds, \nu_k \} \! \}$ as $\{ \! \{ m_1 \cdot \nu_1, \ds, m_r \cdot \nu_r \}\! \}$, with $\nu_i \neq \nu_j$ and $r \le k$. Then
$$
C_{\nu} \cong S^{m_1,\circ} C_{(1,\nu_1)} \times \cdots \times S^{m_r,\circ} C_{(1,\nu_r)},
$$
where $S^{n,\circ} X$ is the open subset of $S^n X$ consisting of $n$ pairwise distinct points. Thus, 
$$
\dim C_{\nu} = \sum_{i = 1}^r 2(1 + \nu_i^2(g-1)) m_i = 2 \left( k + (g-1) \sum_{i = 1}^k \nu_i^2 \right). 
$$
For the second part, notice that 
\begin{align}\label{e:dimstt}
\dim \GCharg{\GL_n}- \dim C_{\nu} & = 2 (n^2 (g - 1) +1) -  2 \sum_{i = 1}^k (\nu_i^2 (g-1 ) + 1) \notag \\
& = 2 (g-1) \sum_{i,j = 1}^k (\ell_i \ell_i - \delta_{i,j}) \nu_i \nu_j - 2 (k-1).
\end{align}
Since $ \sum_{i,j = 1}^k (\ell_i \ell_j - \delta_{i,j}) \nu_i \nu_j - (k-1) \ge 1$, we clearly have  $\dim \GCharg{\GL_n} - \dim C_{\nu} \ge 4$ when $g > 2$. When $g = 2$, a simple computation shows that $\dim \GCharg{\GL_n} - \dim C_{\nu} = 2$ if and only if $n = 2$ and $\nu = (1,1;1,1)$. 


For the third part, we use again \eqref{e:dimstt}, noticing the
following points: the RHS of \eqref{e:dimstt} is increasing in $g$;
the RHS is increased if we replace $(\ell_i,n_i)$ by
$(\ell_i-1,n_i);(1,n_i)$; the RHS is increased if we replace $(1,a)$
and $(1,b)$ by $(1,a+b)$ (when $a+b < n$); and for $a > b > 1$, the
RHS is increased if we replace $(1,a)$ and $(1,b)$ by $(1,a+1)$ and
$(1,b-1)$.  Since it suffices to prove the inequality after performing
operations that increase the RHS, the result follows once we observe
that the inequality holds in the following cases: (i) $\nu=(1,n-1;1,1)$ whenever $n \geq 4$ as well as $(1,1;1,1;1,1)$; (ii)
for $\nu=(1,1;1,1)$ whenever $g \geq 4$, as well as $\nu=(2,1)$
for $g=3$.
\end{proof}

Recall that $\GCharg{\SL_n}$ is the character variety associated to the compact Riemann surface $\Sigma$, of genus $g$, with values in $\SL_n$. Let $T \cong (\Cs)^{2g}$ denote the $2g$-torus. Let $\varrho : \Hom(\pi,\GL) \rightarrow T$ be the map sending $(A_i,B_i)$ to $(\det(A_i), \det(B_i))$. This map is $\GL_n$-equivariant, where the action on $T$ is trivial. Moreover, it fits into a commutative diagram of $\GL_n$-varieties
\begin{equation}\label{eq:trivialbundle1}
\xymatrix{
	\Hom(\pi,\SL_n) \times_{\Z_n^{2g}} T \ar[rr]^{\sim} \ar[dr]_{\mathrm{pr}} & & \Hom(\pi,\GL_n) \ar[dl]^{\varrho}\\
	& T & 
}
\end{equation}
where $\Z_{n}^{2g}$ acts freely on $T$, and the map $\Hom(\pi,\SL_n) \times T \rightarrow \Hom(\pi,\GL_n)$ sends $((A_i,B_i),(t_i,s_i))$ to $(t_i A_i, s_i B_i)$. Therefore it descends to a commutative diagram
\begin{equation}\label{eq:trivialbundle2}
\xymatrix{
	\GCharg{\SL_n} \times_{\Z_n^{2g}} T \ar[rr]^{\sim} \ar[dr]_{\mathrm{pr}} & & \GCharg{\GL_n} \ar[dl]^{\varrho}\\
	& T & 
}
\end{equation}
where $\Z_{n}^{2g}$ acts freely on $\GCharg{\SL_n} \times T$. As in \cite[Corollary 2.6]{FLfreegroups} (see also \cite[Proposition 5]{SikoraSO}), we deduce that: 

\begin{lem}\label{lem:charSfiberbundle} 
	The variety $\GCharg{\GL_n}$ is an \'etale locally trivial fiber bundle over $T$ with fiber $\GCharg{\SL_n}$.
\end{lem}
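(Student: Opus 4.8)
The plan is to read the bundle structure directly off diagram~\eqref{eq:trivialbundle2}, which already identifies $\GCharg{\GL_n}$, as a variety over $T$, with $\GCharg{\SL_n}\times_{\Z_n^{2g}} T = (\GCharg{\SL_n}\times T)/\Z_n^{2g}$. First I would pin down the two pieces of data in this quotient. The group $\Z_n^{2g}$ acts diagonally: on $\GCharg{\SL_n}$ through the central subgroup of scalar matrices in $\SL_n^{2g}$, rescaling each of the $2g$ matrices $(A_i,B_i)$ (which preserves the surface-group relation, since scalars drop out of commutators), and on $T\cong(\Cs)^{2g}$ by translation by the subgroup of $n$-torsion points; this action is free, as recorded in the excerpt. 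Moreover, since $\varrho(t_iA_i,s_iB_i)=(\det(t_iA_i),\det(s_iB_i))=(t_i^n,s_i^n)$ for $A_i,B_i\in\SL_n$, under the identification the structure map $\mathrm{pr}$ is the composite of the projection $\GCharg{\SL_n}\times T\to T$ with the $n$-th power isogeny $[n]\colon T\to T$, descended to the quotient by $\Z_n^{2g}$.

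Next I would note that, since $\C$ has characteristic $0$, the isogeny $[n]\colon T\to T$ is finite, \'etale, and surjective, and is in fact exactly the quotient map for the free translation action of the $n$-torsion subgroup $\Z_n^{2g}$; so $T/\Z_n^{2g}\cong T$ compatibly with the previous paragraph, and diagram~\eqref{eq:trivialbundle2} exhibits $\GCharg{\GL_n}\to T$ as the bundle associated to the $\Z_n^{2g}$-torsor $[n]\colon T\to T$ together with the $\Z_n^{2g}$-variety $\GCharg{\SL_n}$. The final step is the standard base-change computation: for a finite group $\Gamma$ acting freely and diagonally on $X\times Y$ with $Y\to Y/\Gamma$ the quotient torsor, there is a canonical isomorphism $\bigl((X\times Y)/\Gamma\bigr)\times_{Y/\Gamma}Y\cong X\times Y$ over $Y$, because the fiber of the left-hand side over $y\in Y$ consists of the classes $[x,y']$ with $y'\in\Gamma y$, each of which has a unique representative with second coordinate $y$. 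Taking $X=\GCharg{\SL_n}$, $Y=T$, $\Gamma=\Z_n^{2g}$ yields
\begin{equation*}
\GCharg{\GL_n}\times_{T,\,[n]}T\;\cong\;\GCharg{\SL_n}\times T
\end{equation*}
as varieties over $T$; that is, the pullback of $\varrho$ along the finite \'etale surjection $[n]$ is the trivial bundle with fiber $\GCharg{\SL_n}$. Hence $\varrho$ is an \'etale locally trivial fiber bundle with fiber $\GCharg{\SL_n}$ — one may take the \'etale cover of $T$ to be $[n]$ itself, or refine it by \'etale-locally splitting this torsor — which is the assertion; this is the argument of \cite[Corollary~2.6]{FLfreegroups}.

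The only real work is the bookkeeping of the first paragraph: after the identification of~\eqref{eq:trivialbundle2}, checking that $\mathrm{pr}$ really is the $n$-th power isogeny (descended) and that the $\Z_n^{2g}$-action on the $T$-factor is the matching translation action. These are essentially forced by the explicit formula $((A_i,B_i),(t_i,s_i))\mapsto(t_iA_i,s_iB_i)$ of~\eqref{eq:trivialbundle1} together with $\varrho\colon(A_i,B_i)\mapsto(\det A_i,\det B_i)$, so once they are recorded the descent argument above is routine; I do not expect a genuine obstacle beyond this.
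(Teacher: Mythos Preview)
Your proposal is correct and is precisely the argument the paper has in mind: the paper gives no proof beyond recording diagram~\eqref{eq:trivialbundle2} and citing \cite[Corollary~2.6]{FLfreegroups} and \cite[Proposition~5]{SikoraSO}, and what you have written is exactly the unpacking of that citation—identify $\GCharg{\GL_n}\to T$ with the associated bundle $(\GCharg{\SL_n}\times T)/\Z_n^{2g}\to T/\Z_n^{2g}\cong T$ for the free translation action, then trivialize by pulling back along the \'etale torsor $[n]\colon T\to T$. There is nothing to correct.
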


Lemma~\ref{lem:charSfiberbundle} implies that the dimension estimates in Lemma~\ref{lem:stratadimChar}(2),(3) hold for $\GCharg{\SL_n}$ too.

\begin{proof}[Proof of Theorem \ref{thm:slsymp} for $\GCharg{\SL_n}$ and $\GCharg{\GL_n}$]
  When $g =1$ the claim follows from Proposition~\ref{prop:genusone}. The case $(n,g) = (2,2)$ is dealt with in
	Corollary~\ref{cor:22symp} below. The case $n=1$ is trivial.
	
	Therefore we assume $n,g > 1$ and $(n,g) \neq (2,2)$. Recall from Theorem~\ref{thm:characterproperties} that $\GCharg{\GL_n}$ is normal. First, we deduce from this that $\GCharg{\SL_n}$ is also normal. Lemma \ref{lem:charSfiberbundle} implies that $\GCharg{\SL_n}$ is an irreducible variety of dimension $2(g-1)(n^2 - 1)$ since $\dim \GCharg{\GL_n} = 2 n^2(g-1) + 2$. If $\GCharg{\SL_n}$ were not normal, then $\GCharg{\SL_n} \times T$ would also not be normal. But the fact that $\GCharg{\SL_n} \times_{\Z_n^{2g}} T \cong \GCharg{\GL_n}$ is normal, and the map $\GCharg{\SL_n} \times T \rightarrow \GCharg{\GL_n}$ is \'etale, implies by \cite[Proposition 3.17]{Milne} that $\GCharg{\SL_n} \times T$ is normal. Thus, $\GCharg{\SL_n}$ is normal. 
	
	By Theorem \ref{thm:Etingofsum}, the Poisson structure on the dense open subset $C_{(1,n)}$ of $\GCharg{\GL_n}$ is non-degenerate. This implies that the Poisson structure on the whole of the smooth locus is non-degenerate since the complement to $C_{(1,n)}$ in $\GCharg{\GL_n}$ has codimension at least four. The identification $\GCharg{\SL_n} \times_{\Z_n^{2g}} T \cong \GCharg{\GL_n}$ of Lemma \ref{lem:charSfiberbundle} is Poisson, where we equip $\GCharg{\SL_n} \times T$ with the product Poisson structure. We deduce that the Poisson structure on the smooth locus of $\GCharg{\SL_n}$ is non-degenerate, and the singular locus of $\GCharg{\SL_n}$ has codimension at least $4$ when $(n,g) \neq (2,2)$. 
	
	Since $\GCharg{\SL_n}$ and $\GCharg{\GL_n}$ are normal and their singular locus has codimension at least four,  it follows from Flenner's Theorem \cite{Flenner} that they have symplectic singularities.
\end{proof}

In the proof of Theorem \ref{thm:slsymp}, we have only used the fact that $C_{(1,n)}$ is contained in the smooth locus. 
As shown by Goldman \cite{GoldmanSymplectic}, the singular locus 
is precisely the complement to $C_{(1,n)}$.

\begin{lem}\label{lem:singlocusGoldman}
	The singular locus of $\GCharg{\SL_n}$ and $\GCharg{\GL_n}$ is precisely the complement to the open stratum $C_{(1,n)}$.
\end{lem}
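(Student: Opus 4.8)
The plan is to show that every point of $\GCharg{\SL_n}$ (resp.\ $\GCharg{\GL_n}$) outside the open stratum $C_{(1,n)}$ is a singular point; together with the fact, already established in the proof of Theorem~\ref{thm:slsymp}, that $C_{(1,n)}$ is contained in the smooth locus, this gives the claim. By Lemma~\ref{lem:charSfiberbundle} the variety $\GCharg{\GL_n}$ is an \'etale locally trivial fibre bundle over the torus $T$ with fibre $\GCharg{\SL_n}$, and the stratum $C_{(1,n)}$ of the total space pulls back to the corresponding stratum of the fibre; hence smoothness of a point of $\GCharg{\GL_n}$ is equivalent to smoothness of the corresponding point of $\GCharg{\SL_n}$, and it suffices to treat one of the two cases. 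I will work with $\GCharg{\GL_n}$ (or directly with $\GCharg{\SL_n}$, whichever is more convenient for the local model).

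The key step is a local analysis at a semisimple representation $x = x_1^{\oplus \ell_1} \oplus \cdots \oplus x_k^{\oplus \ell_k}$ lying in a non-open stratum $C_\nu$. By Luna's \'etale slice theorem, an \'etale neighbourhood of $x$ in the character variety is modelled on $(N_x /\!\!/\, \Stab(x))$, where $\Stab(x) \cong \prod_i \GL_{\ell_i}$ and $N_x$ is the slice representation; for these quasi-Hamiltonian (equivalently, multiplicative quiver variety) reductions the slice is itself of the same type, namely the reduction of $T^*\Rep$ of a ``local'' quiver whose dimension vector records the multiplicities $\ell_i$ and whose arrow multiplicities are governed by $\dim \Ext^1_\pi(x_i,x_j)$, which for a genus $g$ surface equals $2g\,\delta_{ij} - \langle \nu_i,\nu_j\rangle$-type Euler form expressions (this is the standard computation, e.g.\ via Crawley-Boevey). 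The point $x$ corresponds to the origin of this local model. One then invokes the known criterion for smoothness of such a quotient at the origin: the reduced quotient $\mu^{-1}(1)/\!\!/\, \Stab(x)$ is smooth at $0$ only if the local quiver datum is of a very restricted type (essentially, the dimension vector is indivisible and the point is the cone point of an affine space, forcing the quiver to have no loops at the relevant vertices), and this fails precisely because $x$ is not irreducible — there is at least one vertex with $\ell_i \geq 1$ and at least one loop (coming from $2g \geq 2$), or two distinct summands linked by arrows, producing a genuine singularity at the cone point. Concretely, one can observe that the Poisson (symplectic) leaf through $x$ has strictly positive codimension (Lemma~\ref{lem:stratadimChar}), and a point on a leaf of positive codimension in a symplectic singularity whose local model is a cone cannot be smooth, since a smooth point would lie in the open leaf.

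The cleanest way to finish, and the one I would adopt, is to combine two facts: (1) the \'etale-local model at $x$ is a \emph{cone} (the multiplicative quiver variety slice carries a contracting $\Cs$-action coming from scaling the arrows $A_i^*$, or one passes through the additive degeneration of Proposition~\ref{p:formal-neighborhood}); a cone is smooth at its vertex if and only if it is an affine space; (2) an affine space is a \emph{single} symplectic leaf, whereas by Lemma~\ref{lem:stratadimChar} the stratification near $x$ has more than one stratum (the stratum of $x$ is properly contained in its closure, as $x$ is a degeneration of irreducible representations). Hence the local model is not an affine space, so $x$ is singular. The main obstacle is setting up the \'etale-local model precisely — identifying the slice representation $N_x$ and checking it is the (multiplicative) cotangent-type reduction of the local quiver with the stated arrow data, and verifying the existence of the contracting $\Cs$-action so that ``smooth at the vertex $\Rightarrow$ affine space'' applies. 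Once that local picture is in hand, the conclusion that non-closed-orbit (equivalently, reducible) points are singular is immediate from the leaf-count in Lemma~\ref{lem:stratadimChar}. Alternatively, if one prefers to avoid Luna slices entirely, one can cite Goldman's original computation \cite{GoldmanSymplectic} of the singular locus directly, which identifies it with the reducible locus; but giving the slice argument keeps the paper self-contained and reuses the quasi-Hamiltonian description already set up above.
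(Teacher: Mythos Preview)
Your approach is correct in outline but takes a considerably more elaborate route than the paper. The paper's proof is essentially two lines: from the strata description in Lemma~\ref{lem:stratadimChar}, any semisimple non-simple $\rho$ has positive-dimensional stabilizer in $PG$; Goldman's formula \cite[Section~1.5]{GoldmanSymplectic} for $\dim T_{[\rho]}X$ then immediately gives $\dim T_{[\rho]}X > \dim X$, so the point is singular by a direct tangent-space dimension count---no slice, no cone, no leaf analysis needed. Your Luna-slice/local-quiver route does produce a more structural picture of the singularity and can be made to work, but the concluding step (``affine space is a single symplectic leaf, whereas there are multiple strata'') tacitly identifies the representation-type strata with symplectic leaves; in the paper this identification is only stated, with references to Simpson and Sikora, \emph{after} the lemma. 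You could patch this either by invoking the corresponding result for the local Nakajima quiver variety, or by computing the tangent space at the cone point of the local model directly---which, once unwound, is Goldman's formula again. The ``alternative'' you mention at the end, citing \cite{GoldmanSymplectic}, is in fact exactly what the paper does.
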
 

\begin{proof}
	Let $\G = \GL_n$ or $\SL_n$ and $PG := \G / Z(\G)$. The explicit description of the strata given in Lemma~\ref{lem:stratadimChar} shows that if $\rho \in \Hom(\pi,\G)$ is a semi-simple representation that is not simple then its stabilizer under $\PG$ has strictly positive dimension. Therefore, it follows from the formula for the dimension of $T_{[\rho]} X$ given in Section 1.5 of \cite{GoldmanSymplectic} that $[\rho]$ belongs to the singular locus of $X$. 
\end{proof}

As for quiver varieties, the symplectic leaves of the character variety $X$ are the stabilizer type strata $C_{\nu}$. For $g > 1$ and $G = \GL_n$, this result is contained in \cite{Sim-mrfgspv2} and is in \cite{SikoraAbelian} when $g = 1$. The same statement holds for $\GCharg{\SL_n}$ using Lemma~\ref{lem:charSfiberbundle}.  

\subsection{The case $(n,g) = (2,2)$}\label{ss:22}

The case $(n,g) = (2,2)$ can be thought of as a ``local model'' for
the moduli space $M_{2v}$ of semistable shaves with Mukai vector $2v$
on an abelian or $K3$ surface, where $v$ is primitive with $\langle v, v \rangle = 2$. Therefore, we are able to emulate the arguments of Lehn--Sorger \cite{LSOGrady} and Kaledin--Lehn \cite{KaledinLehn} in this case.  Lemma~\ref{lem:stratadimChar}(1) says that $\GChar{\GL_2}{2}$ has three strata,
$C_{(1,2)}$ consisting of simple representations $E$, $C_{(1,1;1,1)}$
consisting of semi-simple representations $E = F_1 \oplus F_2$, where
$F_1$ and $F_2$ are a pair of non-isomorphic one-dimensional
representations of $\pi$, and $C_{(2,1)}$ the stratum of semi-simple
representations $E = F^{\oplus 2}$, where $F$ is a one-dimensional
representation. By Lemma~\ref{lem:singlocusGoldman}, the singular locus of
$\GChar{\GL_2}{2}$ equals
$\overline{ C}_{(1,1;1,1)} = C_{(1,1;1,1)} \sqcup C_{(2,1)}$.

\begin{thm}\label{thm:Blowup22LS}
  The blowup $\sigma : \widetilde{\GChar{\GL_2}{2}} \rightarrow \GChar{\GL_2}{2}$ along the
  reduced ideal defining the singular locus of $\GChar{\GL_2}{2}$ defines a
  semi-small resolution of singularities.
\end{thm}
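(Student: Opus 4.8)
The plan is to follow the strategy of Lehn--Sorger \cite{LSOGrady} and Kaledin--Lehn \cite{KaledinLehn} for moduli of sheaves with Mukai vector $2v$, $\langle v,v\rangle = 2$, exploiting the fact that $(n,g)=(2,2)$ is a ``local model'' for that situation. First I would record the local structure of $\GChar{\GL_2}{2}$ near each of the two singular strata via Luna's slice theorem. Near a point of the deepest stratum $C_{(2,1)}$, the representation is $F^{\oplus 2}$ with $F$ one-dimensional; its stabilizer in $PGL_2$ is $PGL_2$ itself, and the slice is the quasi-Hamiltonian reduction modeled on $\mathfrak{sl}_2 \otimes (\text{something})$ — concretely, because $g=2$, one gets that the formal (or \'etale) neighborhood is that of the quiver variety for the quiver with one vertex and $g=2$ loops at dimension vector $2$, i.e. $\{(X_1,Y_1,X_2,Y_2) \in \mathfrak{gl}_2 : [X_1,Y_1]+[X_2,Y_2]=0\}\git \GL_2$, and after splitting off the trace part this is $\mathfrak{sl}_2^{\oplus 4}\git\!\!/ SL_2$ with respect to the moment map for the $SL_2$-action, cut out by the single quadratic $\mathfrak{sl}_2$-valued equation. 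This is exactly the affine cone studied by Lehn--Sorger. Near a point of $C_{(1,1;1,1)}$, the stabilizer is a torus and the transverse slice is an $A_1$ (equivalently $\C^2/\Z_2$) surface singularity times a smooth factor, so there the blowup of the reduced singular locus is the standard minimal resolution of the $A_1$ singularity.

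The second step is to prove that blowing up the reduced singular locus resolves the local model at $C_{(2,1)}$. Here I would invoke (or re-derive) the Lehn--Sorger result: for the cone $Z = \{\mu = 0\}\git\!\!/ SL_2 \subset \mathfrak{sl}_2^{\oplus 4}\git\!\!/ SL_2$ (with $\mu$ the quadratic moment map), the blowup along the reduced singular locus is smooth and the exceptional fibre over the vertex is the resolution, with the map being semismall. The key input is that this cone has a symplectic resolution given precisely by this one blowup — Kaledin--Lehn / Lehn--Sorger identify the blowup with $T^*(\text{a smooth projective variety})$-like local model, and verify smoothness by an explicit coordinate computation on the charts of the blowup, checking that the strict transform of the defining equations becomes a smooth complete intersection. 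I would cite this and then transport it through the \'etale slice to conclude smoothness of $\widetilde{\GChar{\GL_2}{2}}$ near the preimage of $C_{(2,1)}$, and separately handle $C_{(1,1;1,1)}$ by the elementary $A_1$-surface computation above; away from the singular locus the blowup is an isomorphism, so $\sigma$ is a resolution.

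The third step is the semismallness. By definition I must show that for every stratum $S$ of the target, $2\,\mathrm{codim}_{\widetilde{X}}\,\sigma^{-1}(S) \geq \mathrm{codim}_X S$ — equivalently that $\dim \sigma^{-1}(x) \leq \tfrac12\,\mathrm{codim}\, \overline{\{x\}}$ for $x$ in each stratum, with the relevant stratification being $C_{(1,2)} \sqcup C_{(1,1;1,1)} \sqcup C_{(2,1)}$ (where $\dim \GChar{\GL_2}{2} = 2\cdot 2^2\cdot 1 + 2 = 10$, $\dim C_{(1,1;1,1)} = 8$, $\dim C_{(2,1)} = 6$ by Lemma~\ref{lem:stratadimChar}). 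Over $C_{(1,2)}$ the map is an isomorphism, so there is nothing to check. Over $C_{(1,1;1,1)}$, which has codimension $2$, the fibres are the exceptional $\mathbb{P}^1$'s of the transverse $A_1$ resolution, of dimension $1 = \tfrac12\cdot 2$, so the inequality is an equality there. Over $C_{(2,1)}$, codimension $4$, I need the fibre dimension to be $\leq 2$; this again comes from the Lehn--Sorger local model, where the exceptional fibre over the vertex of the cone is computed explicitly and has dimension exactly $2$ (it is the fibre of the semismall resolution of the $4$-dimensional cone $\mathfrak{sl}_2^{\oplus 4}/\!\!/_0 SL_2$, which is a quadric-like Lagrangian of the expected half-dimension). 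Since the slice along $C_{(2,1)}$ is (formally) this $4$-dimensional cone times a smooth factor, the fibre over any point of $C_{(2,1)}$ has dimension $2$, matching $\tfrac12\,\mathrm{codim}\, C_{(2,1)}$. Collecting these, all semismallness inequalities hold (indeed with equality on each positive-codimension stratum, so $\sigma$ is in fact strictly semismall).

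The main obstacle I expect is the explicit verification that a single blowup along the \emph{reduced} singular locus already suffices to smooth the deepest singularity — i.e. importing and checking the Lehn--Sorger computation in the present guise. One must be careful that the \'etale (as opposed to merely formal) slice theorem applies so that blowup commutes with the slice, that the reduced singular-locus ideal pulls back to the reduced singular-locus ideal of the local model (no embedded components appear), and that the local model for $(n,g)=(2,2)$ really is the Lehn--Sorger cone rather than a variant — this is where the hypothesis $g=2$ (so that there are exactly $2g=4$ copies of $\mathfrak{sl}_2$, giving $\langle v,v\rangle = 2$) is essential, and it is the reason the argument fails for other $(n,g)$. Once the local identification is secured, smoothness, the resolution property, and the fibre-dimension bounds all follow from the known structure of that cone.
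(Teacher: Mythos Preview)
Your approach is essentially the same as the paper's: identify the analytic local structure of $\GChar{\GL_2}{2}$ at each singular stratum with an explicit model, then invoke the Lehn--Sorger/Kaledin--Lehn result for the deepest stratum. The paper obtains the local identification via the $2$-Calabi--Yau property of $\C[\pi]$ (citing \cite{BGV-NHsCY2}, \cite{Kaplan-Schedler-mpa}) together with Artin approximation, rather than Luna's slice theorem directly; it presents the transverse slice at $C_{(2,1)}$ as the nilpotent orbit closure $\mc{N}=\overline{\{B \in \mathfrak{sp}(4) : B^2=0,\ \rk B=2\}}$, which is isomorphic to your $\mathfrak{sl}_2^{\oplus 4}/\!\!/\!_{0}\, SL_2$ (this identification is \cite[Theorem 5.1]{BS-srqv}).

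However, your dimension bookkeeping is off in a way that makes the semismallness check, as written, inconsistent. By Lemma~\ref{lem:stratadimChar}(1) with $k=1$, $\nu_1=1$, $\ell_1=2$, $g=2$ one gets $\dim C_{(2,1)} = 2(1 + 1\cdot 1)=4$, so $\codim C_{(2,1)}=6$, not $4$; correspondingly the transverse cone $\mc{N}\cong \mathfrak{sl}_2^{\oplus 4}/\!\!/\!_{0}\, SL_2$ is $6$-dimensional ($=12-3-3$), not $4$-dimensional, and the fibre of its symplectic resolution over the vertex has dimension $3 = \tfrac12\cdot 6$, not $2$. The semismallness inequality still holds (with equality), but not for the reasons you state. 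The paper sidesteps this by simply citing semismallness of the local blowup from \cite[Remark 5.4]{KaledinLehn} and \cite[Th\'eor\`eme 2.1]{LSOGrady} rather than recomputing fibre dimensions.
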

Here, a map $X \to Y$ is called \emph{semi-small} if $\dim X \times_Y X = \dim X$.
\begin{proof}

Fix points $E \in C_{(1,1;1,1)}$ and $E' \in C_{(2,1)}$. The character variety $\GCharg{\GL_n}$ is the moduli space of representations of the group algebra $\C[\pi]$ 	of the fundamental group $\pi$ of a genus $g$ surface. The group algebra $\C[\pi]$ is 	a two-dimensional Calabi--Yau algebra. Then \cite[Theorem 6.3, Theorem 6.6]{BGV-NHsCY2}\footnote{Thanks to
	Raf Bocklandt for pointing out these results.} and \cite[Corollary 5.21]{Kaplan-Schedler-mpa} imply that the formal neighborhoods of $E$ and $E'$ in $\GCharg{\GL_n}$ can be identified with the formal neighborhood of $0$ in the quiver varieties associated to the corresponding ext-quiver of these points. In the case of $E$, this is the quiver with two vertices, an arrow in each direction and a loop at each vertex; the dimension vector is $(1,1)$. For $E'$ this is the quiver with one vertex and two loops; the dimension vector is $(2)$. 
	
Let $\mc{O} = \{ B \in \mf{sp}(4) \ | \ B^2 = 0, \ \mathrm{rk} B = 2 \}$. The set $\mc{O}$ is a $6$-dimensional nilpotent adjoint $\Sp(4)$-orbit in $\mf{sp}(4)$. We define $\mc{N}$ to be the closure of $\mc{O}$ in $\mf{sp}(4)$; this variety is a union of three nilpotent orbits. The quiver varieties associated to the above ext-quivers are then isomorphic to $\C^8 \times (\C^2 / \Z_2)$ and $\C^4 \times \mc{N}$ respectively; see \cite[Theorem 5.1]{BS-srqv}. By Artin approximation \cite[Corollary~1.6]{AnalyticArtin}, these identification of formal neighborhoods lift to isomorphisms of analytic germs

$$
(\GChar{\GL_2}{2},E) \cong ( \C^8 \times (\C^2 / \Z_2), 0), \quad
(\GChar{\GL_2}{2},E') \cong ( \C^4 \times \mc{N}, 0).
$$
Clearly, blowing up $\C^8 \times (\C^2 / \Z_2)$ along
the singular locus gives a semi-small resolution of singularities. The
key result \cite[Remark 5.4]{KaledinLehn}, see also
\cite[Th\'eor\`eme 2.1]{LSOGrady}, says that blowing up
along the reduced ideal defining the singular locus in
$\C^4 \times \mc{N}$ also produces a semi-small resolution of
singularities.
\end{proof}

\begin{cor}\label{cor:22symp}
  The blowup $\widetilde{\GChar{\GL_2}{2}}$ of $\GChar{\GL_2}{2}$ along the
  reduced ideal defining the singular locus of $\GChar{\GL_2}{2}$ is a
  smooth symplectic variety and $\GChar{\GL_2}{2}$ has symplectic singularities.
\end{cor}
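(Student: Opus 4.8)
The plan is to deduce everything from Theorem \ref{thm:Blowup22LS} together with the local models established in its proof. First I would record the easy half: since $\GChar{\GL_2}{2}$ is normal (e.g.\ by the étale-local structure, or since the local models $\C^8\times(\C^2/\Z_2)$ and $\C^4\times\mc{N}$ are normal, being products of normal varieties — $\mc{N}$ is a nilpotent orbit closure in $\mf{sp}(4)$, hence normal), and since its smooth locus $C_{(1,2)}$ carries Goldman's holomorphic symplectic form $\omega$, it remains to check that for the resolution $\sigma$ of Theorem \ref{thm:Blowup22LS} the pulled-back form $\sigma^*\omega$ extends to a regular, nondegenerate $2$-form on all of $\widetilde{\GChar{\GL_2}{2}}$; this will simultaneously give that $\sigma$ is a symplectic resolution and (by definition, using that any two resolutions are dominated by a common one, so regularity of $\sigma^*\omega$ propagates to an arbitrary resolution) that $\GChar{\GL_2}{2}$ has symplectic singularities.

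The key step is the extension of $\sigma^*\omega$ across the exceptional locus, and here I would argue locally in the analytic topology using the two germ identifications from the proof of Theorem \ref{thm:Blowup22LS}: near a point of $C_{(1,1;1,1)}$ the germ is $(\C^8\times(\C^2/\Z_2),0)$, and near a point of $C_{(2,1)}$ it is $(\C^4\times\mc{N},0)$. Both of these are known symplectic singularities — the first because $\C^2/\Z_2$ is the $A_1$ Kleinian surface singularity (and a product of a symplectic singularity with affine symplectic space is again one), and the second because $\overline{\mc{O}}=\mc{N}$ is the closure of a nilpotent orbit in a semisimple Lie algebra, which is a symplectic singularity by Panyushev/Beauville. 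Moreover the respective blowups along the reduced singular locus are in both cases symplectic resolutions: for $\C^2/\Z_2$ this is the classical minimal resolution, and for $\mc{N}\subset\mf{sp}(4)$ this is exactly the content of the Lehn--Sorger / Kaledin--Lehn result cited (\cite[Remark 5.4]{KaledinLehn}, \cite[Th\'eor\`eme 2.1]{LSOGrady}), where the blowup is shown to be a symplectic (in fact semismall, crepant) resolution. Since blowing up commutes with the étale/analytic-local identifications used in Theorem \ref{thm:Blowup22LS}, the map $\sigma$ is analytically-locally one of these two symplectic resolutions, so $\sigma^*\omega$ is regular and nondegenerate near every point of the exceptional fibre; away from the exceptional locus $\sigma$ is an isomorphism onto the smooth locus, where $\omega$ is already symplectic. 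Hence $\sigma^*\omega$ is a global algebraic symplectic form on the smooth variety $\widetilde{\GChar{\GL_2}{2}}$.

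The one point needing a little care is compatibility of the symplectic forms under the germ identifications: a priori Goldman's form on $\GChar{\GL_2}{2}$ and the natural symplectic forms on $\C^8\times(\C^2/\Z_2)$ and $\C^4\times\mc{N}$ (coming from the quiver-variety / Hamiltonian-reduction structure) need only agree up to the choice of isomorphism. I would handle this exactly as in the quiver-variety setting: the identifications of formal neighborhoods coming from \cite[Corollary 5.21]{Kaplan-Schedler-mpa} and \cite{BGV-NHsCY2} are Poisson, so the transported symplectic form on the smooth locus of the local model is a holomorphic symplectic form defined on the complement of a codimension-$\ge 2$ set; since the local model is itself normal with symplectic singularities, two such forms differ by a nowhere-vanishing function which, being a unit on a normal affine variety whose Picard group / function situation forces it to be constant after shrinking, can be absorbed — in any case, what we actually need is only that the transported form \emph{extends regularly and nondegenerately} to the blowup, which follows from the local model being a symplectic singularity with the stated blowup as symplectic resolution, a property independent of this scalar. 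This is the step I expect to be the main (though minor) obstacle; everything else is assembling Theorem \ref{thm:Blowup22LS}, normality, and the Lehn--Sorger/Kaledin--Lehn input.
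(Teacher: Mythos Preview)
Your approach is correct and organizes the argument differently from the paper. The paper extends $\sigma^*\omega$ nondegenerately over $\sigma^{-1}(C_{(1,2)} \cup C_{(1,1;1,1)})$ using only the $A_1$ local model, and then, rather than invoking the $\C^4 \times \mc{N}$ model again at $C_{(2,1)}$, uses the \emph{semi-smallness} established in Theorem~\ref{thm:Blowup22LS} to conclude that $\sigma^{-1}(C_{(2,1)})$ has codimension $\geq 3$ in the smooth blowup; the form then extends across this locus by Hartogs and remains nondegenerate since the vanishing locus of its top wedge power would otherwise be a divisor. You instead treat both strata uniformly via their local models, using that each has symplectic singularities and that the blowup is in each case a symplectic (hence crepant) resolution. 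Both routes rest on the same Lehn--Sorger/Kaledin--Lehn input; the paper's packaging via semi-smallness is slightly more economical (it does not separately need that $\mc{N}$ has symplectic singularities), while yours is more uniform.

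On your ``careful point'': your primary argument---that the formal identifications of \cite{BGV-NHsCY2,Kaplan-Schedler-mpa} are Poisson and hence carry Goldman's form to the quiver-variety form---already suffices. Your fallback is morally right but not quite as stated (two symplectic forms on a smooth locus need not differ by a nowhere-vanishing function). The clean formulation is that regular extension of $\sigma^*\omega$ to a resolution depends only on the underlying variety having rational Gorenstein singularities, and nondegeneracy of the extension is equivalent to $\sigma$ being crepant---both properties of the variety and the morphism, independent of which symplectic form $\omega$ one starts with.
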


\begin{proof}
  Let $\sigma : \widetilde{\GChar{\GL_2}{2}} \rightarrow \GChar{\GL_2}{2}$ denote the blowup map. The singularities of $\GChar{\GL_2}{2}$ in a an analytic neighborhood of a point in $C_{(1,1;1,1)}$ are equivalent to an $A_1$ singularity. Therefore the pullback $\sigma^* \omega$ of the symplectic $2$-form $\omega$ on the smooth locus of $\GChar{\GL_2}{2}$ extends to a symplectic $2$-form on $\sigma^{-1}(U)$, where $U$ is the open set $C_{(1,2)} \cup C_{(1,1;1,1)}$. Since $\sigma$ is semi-small, $\sigma^{-1}(C_{(2,1)})$ has codimension at least $3$ in $\widetilde{\GChar{\GL_2}{2}}$. Therefore, $\sigma^* \omega$ extends to a symplectic $2$-form on the whole of $\widetilde{\GChar{\GL_2}{2}}$. Since $\GChar{\GL_2}{2}$ is normal and $\sigma$ is birational, it is a symplectic resolution. In particular, $\GChar{\GL_2}{2}$ has symplectic singularities. 
\end{proof} 

\begin{cor}\label{cor:22sympS}
	The blowup $\widetilde{\GChar{\SL_2}{2}}$ of $\GChar{\SL_2}{2}$ along the
	reduced ideal defining the singular locus of $\GChar{\SL_2}{2}$ is a
	smooth symplectic variety and $\GChar{\SL_2}{2}$ has symplectic singularities.
\end{cor}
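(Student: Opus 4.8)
The plan is to deduce Corollary~\ref{cor:22sympS} from Corollary~\ref{cor:22symp} by the same fiber-bundle argument used to prove Theorem~\ref{thm:slsymp} for $\GCharg{\SL_n}$. First I would recall from Lemma~\ref{lem:charSfiberbundle} (and diagram~\eqref{eq:trivialbundle2}) that $\GChar{\GL_2}{2} \cong \GChar{\SL_2}{2} \times_{\Z_2^{4}} T$, where $T \cong (\Cs)^4$ and $\Z_2^4$ acts freely on $\GChar{\SL_2}{2} \times T$. Hence the \'etale cover $\GChar{\SL_2}{2} \times T \to \GChar{\GL_2}{2}$ is a Poisson map for the product Poisson structure, and the singular locus of $\GChar{\SL_2}{2}$ corresponds (up to the free $\Z_2^4$-action and the smooth factor $T$) to that of $\GChar{\GL_2}{2}$; in particular it is exactly $\GChar{\SL_2}{2} \setminus C_{(1,2)}$ by Lemma~\ref{lem:singlocusGoldman}, and its preimage under the blowup map has codimension $\geq 3$.

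Next I would set up the blowup compatibly. Since the reduced singular locus of $\GChar{\SL_2}{2} \times T$ is the preimage of the reduced singular locus of $\GChar{\GL_2}{2}$ under the \'etale map, and blowing up commutes with flat (in particular \'etale) base change, the blowup $\widetilde{\GChar{\SL_2}{2} \times T}$ along the reduced singular locus is the fiber product $\widetilde{\GChar{\GL_2}{2}} \times_{\GChar{\GL_2}{2}} (\GChar{\SL_2}{2} \times T)$; this is \'etale over the smooth symplectic variety $\widetilde{\GChar{\GL_2}{2}}$ of Corollary~\ref{cor:22symp}, hence itself smooth, and the pullback of $\sigma^*\omega$ gives a symplectic form on it (\'etale maps preserve non-degeneracy of the pulled-back form). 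Moreover $\widetilde{\GChar{\SL_2}{2} \times T} \cong \widetilde{\GChar{\SL_2}{2}} \times T$ since the blowup locus is a product with $T$ (the singular locus does not meet $T$, i.e.\ $T$ is a smooth factor), so $\widetilde{\GChar{\SL_2}{2}}$ is smooth. Finally, descending along the free $\Z_2^4$-action (which preserves the symplectic form and lifts to the blowup since it preserves the reduced singular locus), we get that $\widetilde{\GChar{\SL_2}{2}}$ carries a symplectic form extending the Goldman form on its smooth locus; since $\GChar{\SL_2}{2}$ is normal (established in the proof of Theorem~\ref{thm:slsymp}) and $\sigma$ is birational and proper, this exhibits $\widetilde{\GChar{\SL_2}{2}} \to \GChar{\SL_2}{2}$ as a symplectic resolution, so in particular $\GChar{\SL_2}{2}$ has symplectic singularities.

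The main obstacle is bookkeeping rather than a genuine mathematical difficulty: one must check that the formation of the blowup along the \emph{reduced} singular locus genuinely commutes with the \'etale cover and with the $\Z_2^4$-quotient — i.e.\ that the reduced singular locus of $\GChar{\GL_2}{2}$ pulls back to the reduced singular locus of $\GChar{\SL_2}{2} \times T$ (true because \'etale maps are smooth, so preserve reducedness of subschemes and the singular locus), and that the $\Z_2^4$-action lifts to the blowup and acts freely there (true because it permutes the fibers of $\GChar{\SL_2}{2} \times T \to T$ freely and fixes the blowup center setwise). Given these, the extension of the symplectic form and the smoothness both transfer directly from Corollary~\ref{cor:22symp}. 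Alternatively, and perhaps more cleanly, one can bypass the blowup-commutation issue entirely: run the codimension argument of Corollary~\ref{cor:22symp} directly on $\GChar{\SL_2}{2}$, using that its formal neighborhoods at the points of $C_{(1,1;1,1)}$ and $C_{(2,1)}$ are products of $T^*(\text{pt})$-type factors with the same local models $\C^{\bullet}\times(\C^2/\Z_2)$ and $\C^{\bullet}\times\mc N$ (now with smaller linear factors, since $\SL_2$ has one less dimension in the relevant directions), invoke \cite[Remark 5.4]{KaledinLehn}, and conclude semi-smallness and hence that the blowup of the reduced singular locus is a symplectic resolution exactly as in Theorem~\ref{thm:Blowup22LS} and Corollary~\ref{cor:22symp}.
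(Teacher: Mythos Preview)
Your proposal is correct and follows essentially the same route as the paper: both deduce the $\SL_2$ case from the $\GL_2$ case (Corollary~\ref{cor:22symp}) via the fiber-bundle identification $\GChar{\GL_2}{2} \cong \GChar{\SL_2}{2} \times_{\Z_2^{4}} T$ of Lemma~\ref{lem:charSfiberbundle}, check that the blowups are compatible (the paper phrases this as $\widetilde{\GChar{\GL_2}{2}} \cong \widetilde{\GChar{\SL_2}{2}} \times_{\Z_2^{4}} T$, you as \'etale base change of blowups --- the same content), and conclude smoothness of $\widetilde{\GChar{\SL_2}{2}}$ from that of $\widetilde{\GChar{\GL_2}{2}}$.

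The only organizational difference is in how the symplectic form is produced. The paper transfers the semi-smallness of $\widetilde{\GChar{\GL_2}{2}} \to \GChar{\GL_2}{2}$ to $\sigma: \widetilde{\GChar{\SL_2}{2}} \to \GChar{\SL_2}{2}$ and then repeats verbatim the codimension argument of Corollary~\ref{cor:22symp} to extend $\sigma^*\omega$. You instead pull back the already-constructed symplectic form from $\widetilde{\GChar{\GL_2}{2}}$ via the \'etale cover and restrict to a fiber over $T$. Both are fine. One small wording fix: the phrase ``descending along the free $\Z_2^4$-action'' is not quite what you want for producing the form on $\widetilde{\GChar{\SL_2}{2}}$ --- that quotient gives $\widetilde{\GChar{\GL_2}{2}}$, not $\widetilde{\GChar{\SL_2}{2}}$. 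What you mean is that the pulled-back form on $\widetilde{\GChar{\SL_2}{2}} \times T$ is the product of a $2$-form on $\widetilde{\GChar{\SL_2}{2}}$ with the standard one on $T$ (this is the Poisson compatibility you already noted), and you restrict to a fiber. Also, note that the proof of Theorem~\ref{thm:slsymp} as written excludes $(n,g)=(2,2)$ and defers to this corollary, so for normality of $\GChar{\SL_2}{2}$ you should say that the \emph{normality argument} there (via Lemma~\ref{lem:charSfiberbundle} and \cite[Proposition~3.17]{Milne}) applies verbatim, rather than citing the theorem itself.
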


\begin{proof}
	Let $I$ denote the reduced ideal in $\C[\GChar{\SL_2}{2}]$ defining the
	singular locus. Since the singular locus is stable under the action
	of $\Z_n^{2g}$, so too is $I$. Therefore, the action of $\Z_n^{2g}$
	lifts to the blowup $\widetilde{\GChar{\SL_2}{2}}$ making
	$\sigma : \widetilde{\GChar{\SL_2}{2}} \rightarrow \GChar{\SL_2}{2}$
	equivariant. Theorem \ref{thm:Blowup22LS}, together with the fact that
	$$
	\widetilde{\GChar{\GL_2}{2}} \cong \widetilde{\GChar{\SL_2}{2}} \times_{\Z_{n}^{2g}} T,
	$$
	implies that $\widetilde{\GChar{\SL_2}{2}}$ is smooth. Moreover, the fact that $\widetilde{\GChar{\GL_2}{2}} \rightarrow \GChar{\GL_2}{2}$ is semi-small implies that $\sigma : \widetilde{\GChar{\SL_2}{2}} \rightarrow \GChar{\SL_2}{2}$ is semi-small. The argument that this implies that $\sigma$ is a symplectic resolution is identical to the first part of the proof of Corollary \ref{cor:22symp}. 
\end{proof}
Looking at the proofs of the preceding corollaries, we see that the above blowup provides symplectic resolutions of $\GChar{\GL_2}{2}$ and $\GChar{\SL_2}{2}$. This verifies Theorem \ref{t:constr-res}.(ii).

\subsection{The genus one case}\label{sec:genus1}

Recall that $\G$ equals either $\GL$ or $\SL$. Let $\T$ be a maximal torus in $\G$. The following is well-known. It can be deduced from the corresponding statement for the commuting variety in $\g \times \g$, where $\g = \mr{Lie} \, \G$; see \cite{JosephChevalley}, \cite[Sections 2.7 and 2.8]{AlmostCommutingVariety}. 

\begin{prop}\label{prop:genusone}
Fix $g = 1$. As symplectic singularities, the $\G$-character variety of $\Sigma$ is isomorphic to $(\T \times \T) / \s_n$.   
\end{prop}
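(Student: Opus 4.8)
The plan is to realize both sides of the claimed isomorphism explicitly, match their underlying varieties, and then check that the symplectic forms on the smooth loci agree. For $g=1$ the fundamental group $\pi = \Z^2$ is abelian, so a representation $\pi \to \G$ is a pair of commuting elements $(A,B) \in \G \times \G$. Thus $\Hom(\pi,\G)^\circ$ is the (identity component of the) commuting variety $\mathscr{C}(\G) = \{(A,B) \in \G \times \G : AB = BA\}$, and $\GChar{\G}{1} = \mathscr{C}(\G) \git \G$. The first step is to invoke the classical fact (from \cite{JosephChevalley} and \cite[Sections 2.7, 2.8]{AlmostCommutingVariety}) that a pair of commuting elements in a connected reductive $\G$ of type $A$ can, up to conjugation and up to the identity component, be simultaneously conjugated into a fixed maximal torus $\T$; more precisely, the inclusion $\T \times \T \hookrightarrow \mathscr{C}(\G)^\circ$ induces an isomorphism $(\T \times \T)/\s_n \iso \mathscr{C}(\G)^\circ \git \G$, where $\s_n = W$ is the Weyl group acting diagonally. (For $\GL_n$ this is because a generic commuting pair is simultaneously diagonalizable and the GIT quotient is insensitive to the non-diagonalizable locus, which has strictly smaller dimension; for $\SL_n$ one restricts.) This already gives the isomorphism of varieties $\GChar{\G}{1} \cong (\T\times\T)/\s_n$.

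The second step is to upgrade this to an isomorphism of \emph{symplectic singularities}, i.e.\ to check the two symplectic forms on the smooth loci correspond. By Theorem~\ref{thm:slsymp} (whose proof for $g=1$ is being completed here, so one should be slightly careful about circularity — but the normality and the non-degeneracy of the Poisson structure on $C_{(1,n)}$ follow from the quasi-Hamiltonian picture independently) both sides are normal with symplectic smooth locus, and a symplectic form is determined by its restriction to any dense open subset. So it suffices to identify the forms on a single dense open set. On $\GChar{\G}{1}$, Goldman's form restricted to the locus $C_{(1,n)}$ of irreducible (here: regular, simultaneously diagonalizable with distinct eigenvalue-data) representations is computed via the cup product pairing on $H^1(\Sigma, \mathrm{ad}\,\rho)$; on $(\T\times\T)/\s_n$ the natural form is the descent of the standard holomorphic symplectic form $\sum_i \frac{da_i}{a_i}\wedge\frac{db_i}{b_i}$ on $\T\times\T \cong (\Cs)^n \times (\Cs)^n$ (this form is $\s_n$-invariant, hence descends). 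One then checks these match on the open stratum: over a diagonal commuting pair $(A,B)$ with $A = \mathrm{diag}(a_i)$, $B=\mathrm{diag}(b_i)$, the tangent space to the character variety is identified with $\mathfrak{t}\times\mathfrak{t}$ (the Cartan part of $H^1$), and Goldman's pairing becomes the Killing-form pairing $\langle dA, dB\rangle$, which in logarithmic coordinates is exactly $\sum \frac{da_i}{a_i}\wedge \frac{db_i}{b_i}$. This is essentially a local computation, and for $\GL_n$ it is standard — see the references to \cite{AlmostCommutingVariety}; the $\SL_n$ case follows by restriction.

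The main obstacle is the second step: being careful that the two symplectic forms really agree and not merely differ by a scalar or a twist, and handling the normalization of Goldman's form against the torus form. This is where one genuinely needs to do a tangent-space computation at a regular point, or alternatively cite \cite{AlmostCommutingVariety} where the commuting-variety reduction is worked out Poisson-theoretically (the commuting variety of $\g$ is the additive degeneration, and the multiplicative version behaves the same way). A secondary technical point is the passage between the multiplicative commuting variety $\mathscr{C}(\G)$ and its Lie-algebra analogue in $\g\times\g$: the references cited prove the statement for the commuting variety in $\g\times\g$, so one must note that near the identity (or by the same simultaneous-diagonalization argument) the group version reduces to the same combinatorics, $\T\times\T$ replacing $\mathfrak{t}\times\mathfrak{t}$ and $(\Cs)^{2n}$ replacing $\C^{2n}$. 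Once the identification of forms on the dense open locus is in hand, normality of both sides (the quotient of a smooth affine variety by a finite group is normal) plus the fact that symplectic forms extend uniquely finishes the proof.
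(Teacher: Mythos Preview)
Your approach is essentially the same as the paper's, which does not give a detailed argument either but simply declares the result well-known and defers to \cite{JosephChevalley} and \cite[Sections 2.7--2.8]{AlmostCommutingVariety}. Your outline is in fact more detailed than what the paper provides.

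Two points deserve attention. First, your reference to ``the locus $C_{(1,n)}$ of irreducible representations'' is misleading: for $g=1$ and $n>1$ there are \emph{no} irreducible $n$-dimensional representations of $\pi=\Z^2$ into $\SL_n$ or $\GL_n$ (every commuting pair is simultaneously triangularizable), so $C_{(1,n)}$ in the paper's sense is empty. Your parenthetical redefinition (``regular, simultaneously diagonalizable with distinct eigenvalue-data'') is the correct open locus to use---it is the smooth locus of $(\T\times\T)/\s_n$---but you should not call it $C_{(1,n)}$, and the Goldman form there must be computed for a \emph{reducible} representation, so the tangent-space identification is with $\mathfrak{t}\times\mathfrak{t}$ via the decomposition into one-dimensional summands rather than via irreducibility.

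Second, the paper explicitly singles out as ``the main difficulty'' the fact that $\GChar{\G}{1}$ is reduced (it is unknown whether $\Hom(\pi,\G)$ itself is reduced for $g=1$), crediting \cite{AlmostCommutingVariety} for this. Your sketch glosses over this: the map $(\T\times\T)/\s_n \to \mathscr{C}(\G)\git\G$ being a bijection on closed points, or even an isomorphism onto the reduced subscheme, does not immediately give an isomorphism of schemes unless you know the target is reduced. You should either cite \cite{AlmostCommutingVariety} for reducedness directly, or note that the GIT quotient of the commuting variety is reduced because the restriction map $\C[\G\times\G]^{\G}\to\C[\T\times\T]^{\s_n}$ is already surjective (this is what Joseph proves).
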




Unlike the case $g > 1$, it is not clear whether $\Hom(\pi,\G)$ is reduced, but it is shown in  \cite{AlmostCommutingVariety} that the corresponding $\G$-character variety is reduced. This is the main difficulty: the above statement on the level of the reduced character variety was proved earlier by Joseph \cite{JosephChevalley}. In the case $\G = \GL$, the Hilbert--Chow morphism defines a symplectic resolution $\pi : \Hilb^n(\Cs \times \Cs) \rightarrow (\T \times \T) / \s_n$. Similarly, the the preimage $\Hilb^n_0(\Cs \times \Cs) \subset \Hilb^n(\Cs \times \Cs)$ of $\GChar{\SL_n}{1} \subset \GChar{\GL_n,1}$ under $\pi$ defines a symplectic resolution of $\GChar{\SL_n}{1}$. Notice that the case $n = 1$ is trivial since $\GChar{\GL_1}{1} = \Cs \times \Cs$ with its standard symplectic structure. 

We  also need one additional case in genus one, aside from general and special linear groups,  which is best treated here:
\begin{prop}\label{prop:genusone-pgl2}
The character variety $\GChar{\PGL_2}{1}$ admits a projective resolution of singularities, given by the blowup of $\Hilb^2_0(\Cs \times \Cs)/\Z_2^2$ along the singular locus.
\end{prop}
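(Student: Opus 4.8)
The plan is to realize $\GChar{\PGL_2}{1}$ as a finite quotient of $\GChar{\SL_2}{1}$ and then transport the Hilbert--Chow resolution of the latter. First I would observe that, by the same reasoning as in the introduction (see Section~\ref{ss:fin-quotient}), there is a short exact sequence $1 \to \Z_2 \to \SL_2 \to \PGL_2 \to 1$ inducing, after applying $\Hom(\pi,-)^\circ \git \, \SL_2$ on the connected component of the identity, an isomorphism $\GChar{\PGL_2}{1} \cong \GChar{\SL_2}{1}/\Z_2^{2}$, where $\Z_2^{2} = H^1(\pi,\Z_2) = \Z_2^{2g}$ for $g=1$ acts on $\GChar{\SL_2}{1}$ by twisting a representation $(A,B) \mapsto (\pm A, \pm B)$. (One checks that the $\Z_2^2$-action is well-defined and that the quotient really is the connected $\PGL_2$-character variety.) Combined with Proposition~\ref{prop:genusone}, which gives a symplectic isomorphism $\GChar{\SL_2}{1} \cong \Hilb^2_0(\Cs\times\Cs)/\!\sim$ --- more precisely the Hilbert--Chow morphism $\Hilb^2_0(\Cs \times \Cs) \to \GChar{\SL_2}{1}$ is a symplectic resolution and is $\Z_2^2$-equivariant --- we conclude that $\Hilb^2_0(\Cs \times \Cs)/\Z_2^2 \to \GChar{\PGL_2}{1}$ is a projective, generically finite, birational morphism from a variety with at worst quotient singularities.

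The second step is to identify the singularities of $\Hilb^2_0(\Cs\times\Cs)/\Z_2^2$ and show that a single blowup of the reduced singular locus resolves them. Here I would argue that the $\Z_2^2$-action on the smooth symplectic fourfold $\Hilb^2_0(\Cs\times\Cs)$ (which is an open subvariety of $\Hilb^2(\Cs\times\Cs)$) has fixed locus consisting of finitely many connected components, each of which is a smooth surface, and that at a generic fixed point the action on the normal two-plane is by $-1$, i.e., the stabilizer acts as $\Z_2 \hookrightarrow \Sp_2(\C)$ with the non-trivial element being $-\mathrm{id}$. Hence the quotient acquires transverse $A_1$-surface singularities along those surfaces, and (away from possibly a smaller locus where more of $\Z_2^2$ acts nontrivially, which one checks is empty or still of this form) blowing up the reduced singular locus resolves these $A_1$-singularities, exactly as in the genus-two computation of Corollary~\ref{cor:22symp}. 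The essential input is that $\Z_2^2$ acts on the holomorphic symplectic surface $\Cs\times\Cs$ by symplectic automorphisms with isolated fixed points, so on $\Hilb^2$ it acts symplectically with fixed locus of pure codimension two, giving only $A_1$ transverse type.

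The final step is to assemble these: the composite $\widetilde{\Hilb^2_0(\Cs\times\Cs)/\Z_2^2} \to \Hilb^2_0(\Cs\times\Cs)/\Z_2^2 \to \GChar{\PGL_2}{1}$ is then a projective morphism with smooth source, birational (the first map is birational by construction as the blowup of a proper subvariety, the second is generically finite of degree one since $\Z_2^2$ acts freely on the open dense stratum of irreducible representations), hence a resolution of singularities. If one additionally wants it to be symplectic --- as needed for Theorem~\ref{t:class-res}(c)(i) --- one notes as in the proof of Corollary~\ref{cor:22symp} that the pulled-back symplectic form extends over the $A_1$-locus automatically, and that the remaining exceptional locus has codimension $\geq 3$ (using that the blowup of $\C^2\times(\C^2/\Z_2)$ along its singular locus is semismall, applied in transverse slices), so the form extends to all of the resolution by Hartogs.

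\textbf{Main obstacle.} The technical heart is the second step: verifying precisely which elements of $\Z_2^{2g}=\Z_2^2$ have fixed points on $\Hilb^2_0(\Cs\times\Cs)$, what the fixed loci look like, and that the transverse singularity type is always $A_1$ (rather than, say, a more degenerate quotient singularity appearing where the whole group $\Z_2^2$ stabilizes a point, or along the ``diagonal'' locus of non-reduced length-two subschemes). Making the identification $\GChar{\PGL_2}{1}\cong\GChar{\SL_2}{1}/\Z_2^2$ fully rigorous --- including the subtlety that one must restrict to the identity component and that the $\SL_2$-GIT quotient of $\Hom(\pi,\SL_2)$ really does surject onto the $\PGL_2$-quotient of the identity component with the stated fibers --- is the other point requiring care, and is where the results quoted in the introduction on finite quotients (Section~\ref{ss:fin-quotient}) do the work.
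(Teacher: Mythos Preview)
Your overall strategy matches the paper's proof: realize $\GChar{\PGL_2}{1}$ as $\GChar{\SL_2}{1}/\Z_2^2$, lift the $\Z_2^2$-action to the Hilbert scheme resolution, show the quotient has only $A_1$ singularities, and blow those up. However, there is a genuine dimensional error in your second step that makes the analysis as written incorrect.

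You call $\Hilb^2_0(\Cs\times\Cs)$ a ``smooth symplectic fourfold'' and an ``open subvariety of $\Hilb^2(\Cs\times\Cs)$''. It is neither: it is the \emph{closed} fiber of the product map $\Hilb^2(\Cs\times\Cs)\to(\Cs)^2$ over $(1,1)$, hence a smooth symplectic \emph{surface} (this matches $\dim\GChar{\SL_2}{1}=2(n-1)=2$). Consequently your description of the fixed locus as ``smooth surfaces'' with ``transverse $A_1$'' singularities cannot be right; on a surface the fixed points of a nontrivial symplectic involution are isolated, and the quotient singularities are genuine isolated $A_1$ surface singularities. Relatedly, your claim that $\Z_2^2$ acts on $\Cs\times\Cs$ with isolated fixed points is false: the sign action $(a,b)\mapsto(\pm a,\pm b)$ is \emph{free} on $\Cs\times\Cs$; fixed points only appear on $\Hilb^2_0$ because a nontrivial element can swap the two points of a length-two subscheme.

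Once the dimension is corrected, the ``main obstacle'' you identify is exactly what the paper resolves, and by a direct computation: for each nontrivial $\sigma\in\Z_2^2$ one writes down the (two) fixed points on $\Hilb^2_0(\Cs\times\Cs)$ explicitly and checks these six points are pairwise distinct, so every isotropy group has order two. (The paper also notes the slicker argument you gesture at: $\Z_2^2$ acts symplectically on a surface, so any isotropy group embeds in $\SL_2$ and cannot be $\Z_2^2$.) With only $A_1$ points, the blowup of the reduced singular locus is automatically a symplectic resolution, and no separate Hartogs/semismallness argument is needed.
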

\begin{proof}
  Observe that $\PGL_2 = \SL_2 / \Z_2$. Hence, $\GChar{\PGL_2}{1}= \CharS(\SL_2,1)/\Z_2^2$. Now, the $\Z_2^2$ action lifts to the resolution $\Hilb^2_0(\Cs \times \Cs) \to \CharS(\SL_2,1)$, acting by negation in each of
  the factors of $\Cs$. Then we have the birational projective Poisson morphism
  $\Hilb^2_0(\Cs \times \Cs)/\Z_2^2 \to \GChar{\PGL_2}{1}$.

  We claim that the singularities on the source are all type $A_1$ surface singularities, hence reduced. This is because, writing $\Z_2 = \{-I,I\} \in \SL_2$, the fixed locus of $(-I,I) \in \SL_2^2$ consists of the two points $\{(i, 1),(-i, 1)\}, \{(i,-1), (-i, -1)\} \in \Hilb^2(\Cs \times \Cs)$, which do not include any of the fixed points for
  $(I,-I)$ (namely, the two points $\{(\pm 1, i), (\pm 1, -i)\} \in \Hilb^2(\Cs \times \Cs)$) or for
  $(-I,-I)$ (the two points $\{(i, \pm i), (-i, \mp i)\} \in \Hilb^2(\Cs \times \Cs)$). Thus the nontrivial isotropy groups of points on $\Hilb_0^2(\Cs \times \Cs)$ are all of size two, at which we get type $A_1$ surface singularities. This is also \emph{a priori} clear since $\Z_2^2$ acts symplectically on the surface $\Hilb_0^2(\Cs \times \Cs)$, and so the linearization of the isotropy group of a point is isomorphic to a subgroup of $\Sp_2=\SL_2$, which cannot be $\Z_2^2$.

 Now,  since  $\Hilb_0^2(\Cs \times \Cs)/\Z_2^2$ only has type $A_1$ surface singularities,  the blowup of the singular locus is a symplectic resolution, which by composition gives a symplectic resolution of $\GChar{\PGL_2}{1}$.  
\end{proof}
This  immediately implies Theorem \ref{t:constr-res}.(i).

\subsection{Factoriality}\label{sec:localfactchar}

In this section we show that the character variety is locally factorial when $n,g \ge 2$ and $(n,g) \neq (2,2)$. We begin with the $\GL$-character variety. Recall that $\xi : \Hom(\pi,\GL_n) \rightarrow \GCharg{\GL_n}$ is the quotient map. We begin with the following dimension estimates of Crawley-Boevey and Shaw:

\begin{thm} \cite[Theorem 7.2, Corollary 7.3]{CBShaw}\label{t:cbnormalChar} 
	Consider a stratum $C_{\nu}$ in $\GChar{\GL_n}{g}$ of representation type
	$(\ell_1, \nu_1; \ds ; \ell_k,\nu_k)$. Then for all $z \in C_{\nu}$, the fibre $\xi^{-1}(z) \subseteq \Hom(\pi,\GL_n)$ has dimension at most $n^2 g - \sum_t ( \nu_t^2 (g-1) +1)$, so the dimension of $\xi^{-1}(C_{\nu})$ is at most $n^2 g + \sum_t ( \nu_t^2 (g-1) +1)$.
\end{thm}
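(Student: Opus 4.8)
The plan is to establish the fiber dimension bound $\dim \xi^{-1}(z) \le n^2 g - \sum_t(\nu_t^2(g-1)+1)$ for $z \in C_\nu$ by a standard argument: the fiber $\xi^{-1}(z)$ contains a unique closed orbit $T(z)$, namely the orbit of the semisimple representation $\rho = \bigoplus_t x_t^{\oplus \ell_t}$ with $\dim x_t = \nu_t$, and every other point in the fiber degenerates to this one. So I would first compute $\dim \Hom(\pi,\GL_n)$, which by Theorem~\ref{thm:characterproperties}(2) (complete intersection in $\GL_n^{2g}$ cut out by one $\GL_n$-valued relation) has dimension $2g n^2 - (n^2 - 1) = n^2(2g-1)+1$. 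Then, since $\xi^{-1}(z)$ consists of representations with the prescribed semisimplification, I would bound its dimension via the associated-graded/extension description: the closed orbit $T(z) = \GL_n \cdot \rho$ has dimension $n^2 - \dim \mathrm{Stab}(\rho)$, and the nilpotent directions (non-split extensions) are controlled by $\Ext^1_\pi$-type computations.

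Concretely, the cleanest route is to use the known formula (Crawley-Boevey--Shaw) for the dimension of the fiber over a point of given representation type. Since this is quoted directly as \cite[Theorem 7.2, Corollary 7.3]{CBShaw}, I would simply \emph{cite} it rather than reprove it — the statement in the excerpt is verbatim this result. That is: for $z \in C_\nu$, $\dim \xi^{-1}(z) \le n^2 g - \sum_t(\nu_t^2(g-1)+1)$. The reference handles the case $g>1$ where $\Hom(\pi,\GL_n)$ is the relevant moduli, using their deformed preprojective / multiplicative preprojective algebra technology and the Euler form of the quiver with one vertex and $g$ loops; the key input is that for such a quiver the relevant Ext-groups have dimension $\nu_i\nu_j \cdot (\text{something involving } g)$, giving the stated bound on fiber dimension as the number of nilpotent parameters in a representation with fixed semisimplification.

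For the second assertion, I would combine the fiber bound with $\dim C_\nu$. By Lemma~\ref{lem:stratadimChar}(1), $\dim C_\nu = 2(k + (g-1)\sum_i \nu_i^2)$ where the multiplicities have been collapsed; but the bound from Theorem~\ref{t:cbnormalChar} is phrased with the $\nu_t$ running over all $t$ with multiplicity, so I must be careful: $\sum_t$ means the sum over the $k$ pairs $(\ell_t,\nu_t)$, i.e. $\sum_{t=1}^k$, not weighted by $\ell_t$. Then $\dim \xi^{-1}(C_\nu) \le \dim C_\nu + \max_{z}\dim\xi^{-1}(z) \le 2(k+(g-1)\sum_{t=1}^k \nu_t^2) + n^2 g - \sum_{t=1}^k(\nu_t^2(g-1)+1)$. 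Wait — the statement to prove is $\dim\xi^{-1}(C_\nu) \le n^2 g + \sum_t(\nu_t^2(g-1)+1)$, so I would need $\dim C_\nu \le 2\sum_t(\nu_t^2(g-1)+1) = \dim C_\nu$ exactly (since $\dim C_\nu = 2\sum_{t=1}^k(\nu_t^2(g-1)+1)$ rewriting $2(k+(g-1)\sum\nu_t^2) = 2\sum_{t=1}^k(1 + (g-1)\nu_t^2)$). So $\dim\xi^{-1}(C_\nu) \le \dim C_\nu + (n^2 g - \sum_t(\nu_t^2(g-1)+1)) = n^2 g + \sum_t(\nu_t^2(g-1)+1)$, exactly as claimed.

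So the proof is essentially: \emph{(1)} recall the fiber dimension bound from \cite[Theorem 7.2]{CBShaw}; \emph{(2)} add to it the dimension of the base stratum from Lemma~\ref{lem:stratadimChar}(1), observing the two $\sum_t(\nu_t^2(g-1)+1)$ terms are $\dim C_\nu / 2$ and appear with opposite signs after the cancellation. The main obstacle is purely bookkeeping: making sure the index conventions match between the Crawley-Boevey--Shaw statement (sum over all $k$ summand-types) and the weighted-partition notation, and that the fiber estimate I'm quoting is really an upper bound valid for \emph{every} $z \in C_\nu$ uniformly (which it is, by their Theorem~7.2), so that upper-semicontinuity of fiber dimension is not even needed. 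There is essentially no hard analytic or geometric content here beyond citing \cite{CBShaw}; the lemma is a direct consequence of that citation together with the stratum-dimension computation already in hand.

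\medskip

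\noindent\textit{Proof.} By \cite[Theorem 7.2]{CBShaw}, for each $z \in C_\nu$ of representation type $(\ell_1,\nu_1;\dots;\ell_k,\nu_k)$ the fibre $\xi^{-1}(z)$ has dimension at most $n^2 g - \sum_{t=1}^k(\nu_t^2(g-1)+1)$. By Lemma~\ref{lem:stratadimChar}(1),
$$
\dim C_\nu = 2\Bigl(k + (g-1)\sum_{t=1}^k \nu_t^2\Bigr) = 2\sum_{t=1}^k\bigl(\nu_t^2(g-1)+1\bigr).
$$
Since $\xi$ restricted to $\xi^{-1}(C_\nu)$ has image $C_\nu$ and all fibres of dimension at most the above bound, we obtain
$$
\dim \xi^{-1}(C_\nu) \le \dim C_\nu + \Bigl(n^2 g - \sum_{t=1}^k(\nu_t^2(g-1)+1)\Bigr) = n^2 g + \sum_{t=1}^k\bigl(\nu_t^2(g-1)+1\bigr),
$$
as claimed. \qed
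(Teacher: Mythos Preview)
Your proposal is correct and matches the paper's approach exactly: the paper gives no proof for this theorem, treating it purely as a citation of \cite[Theorem 7.2, Corollary 7.3]{CBShaw}. You correctly observe that the second bound is an immediate consequence of the first together with the formula $\dim C_\nu = 2\sum_t(\nu_t^2(g-1)+1)$ from Lemma~\ref{lem:stratadimChar}(1), which is presumably why the paper's statement links the two with ``so'' and cites Corollary~7.3 alongside Theorem~7.2.
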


The action of $\GL$ on $\Hom(\pi,\GL_n)$ factors through $\PGL_n$. The open subset of $\Hom(\pi,\GL_n)$ where $\PGL_n$ acts freely is denoted $\Hom(\pi,\GL_n)_{\free}$.

\begin{lem}\label{lem:freecompcharvariety}
  Assume that $n,g > 1$ and $(n,g) \neq (2,2)$. The variety $\Hom(\pi,\GL_n)$ is normal and locally factorial. The complement to $\Hom(\pi,\GL_n)_{\free}$ in $\Hom(\pi,\GL_n)$ has codimension at least four.
\end{lem}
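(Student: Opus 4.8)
The plan is to reduce normality and local factoriality of $\Hom(\pi,\GL_n)$ to the facts already recorded about the GIT quotient together with a codimension estimate on the non-free locus, using Drézet's criterion for local factoriality of GIT quotients (cited for Theorem~\ref{thm:slsqfactor}) in reverse, or more directly, Boutot-type / descent arguments. First I would establish the codimension statement, which is the geometric heart. Write $\Hom(\pi,\GL_n)_{\mathrm{red}} := \Hom(\pi,\GL_n) \setminus \Hom(\pi,\GL_n)_{\free}$ for the locus where $\PGL_n$ does not act freely; a point $\rho$ lies in this locus exactly when its image stabilizer in $\PGL_n$ is nontrivial, i.e.\ when the semisimplification of $\rho$ is not of representation type $(1,n)$ \emph{and} lies in a positive-dimensional stabilizer stratum, or when $\rho$ is not a direct sum of distinct irreducibles with trivial stabilizer. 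Decomposing $\Hom(\pi,\GL_n)_{\mathrm{red}}$ according to the representation type $\nu$ of the semisimplification of $\rho$, each piece maps to the stratum $C_\nu \subseteq \GCharg{\GL_n}$ under $\xi$ with $\nu \neq (1,n)$, so by Theorem~\ref{t:cbnormalChar} its dimension is at most $n^2 g + \sum_t(\nu_t^2(g-1)+1)$. Comparing with $\dim \Hom(\pi,\GL_n) = n^2(2g-1)+1$ (a complete intersection of the expected dimension by Theorem~\ref{thm:characterproperties}(2)), the codimension of the $\nu$-piece is at least
$$
n^2(2g-1)+1 - n^2 g - \sum_t(\nu_t^2(g-1)+1) = (g-1)\Bigl(n^2 - \sum_t \nu_t^2\Bigr) - (k-1),
$$
which is exactly half of the quantity $\dim \GCharg{\GL_n} - \dim C_\nu$ analyzed in \eqref{e:dimstt} of Lemma~\ref{lem:stratadimChar}. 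By Lemma~\ref{lem:stratadimChar}(2), this is $\geq 2$ for every $\nu \neq (1,n)$ when $(n,g)\neq(2,2)$, so the non-free locus has codimension $\geq 2$; in fact one wants codimension $\geq 4$, which again follows from Lemma~\ref{lem:stratadimChar}(2) since that gives $\dim\GCharg{\GL_n}-\dim C_\nu \geq 4$, hence the above codimension is $\geq 2$ — so here I would sharpen: the fibre dimension bound of Theorem~\ref{t:cbnormalChar} is attained only on the closed orbit locus, and on the free locus the fibres are $\dim\PGL_n = n^2-1$, which improves the estimate for the relevant components to yield codimension $\geq 4$. (This fibre-dimension bookkeeping is the one place requiring care.)

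Granting the codimension-four statement, normality of $\Hom(\pi,\GL_n)$ is already given by Theorem~\ref{thm:characterproperties}(1), so I would then deduce local factoriality. Here the argument is: $\Hom(\pi,\GL_n)_{\free} \to \GCharg{\GL_n}_{\free}$ is a principal $\PGL_n$-bundle over the locally factorial open locus $\GCharg{\GL_n}_{\free}$ (locally factorial by Theorem~\ref{thm:slsqfactor} together with the fact that, away from the non-free locus, the quotient is \'etale-locally a product with $\PGL_n$, or by Drézet applied to this locus), and the total space of a $\PGL_n$-torsor over a locally factorial variety is locally factorial because $\PGL_n$ has trivial Picard group and $\mathrm{Pic}$ of $\mathrm{PGL}_n$-bundles is controlled — more precisely, the pullback map on class groups is injective with cokernel controlled by the character group of $\PGL_n$, which is trivial, so $\mathrm{Cl}(\Hom(\pi,\GL_n)_{\free}) = 0$ whenever the base has trivial class group. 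Since $\Hom(\pi,\GL_n)$ is normal and $\Hom(\pi,\GL_n)_{\mathrm{red}}$ has codimension $\geq 2$, restriction gives $\mathrm{Cl}(\Hom(\pi,\GL_n)) = \mathrm{Cl}(\Hom(\pi,\GL_n)_{\free})$, and combined with the codimension-$\geq 4$ / Serre's criterion one upgrades class-group triviality to local factoriality (every Weil divisor is Cartier): on the smooth free locus this is automatic, and a reflexive rank-one sheaf trivial in codimension one extends.

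The main obstacle I anticipate is the codimension-four estimate for the non-free locus: naively Theorem~\ref{t:cbnormalChar} only yields codimension $\geq 2$ matching half of $\dim\GCharg{\GL_n}-\dim C_\nu$, so I must separate the contribution of the $\PGL_n$-orbit directions. The fix is to stratify $\xi^{-1}(C_\nu)$ further by stabilizer type of $\rho$ itself (not just of its semisimplification): on the open piece where $\rho$ is already semisimple the fibre over a point of $C_\nu$ is the \emph{orbit}, of dimension $n^2 - \mathrm{something}$ strictly smaller than the Crawley-Boevey--Shaw bound, and the non-semisimple locus within $\xi^{-1}(C_\nu)$ is handled by a deformation-to-the-normal-cone / associated-graded argument reducing to smaller $n$. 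Carrying this out so that every component of $\Hom(\pi,\GL_n)_{\mathrm{red}}$ genuinely has codimension $\geq 4$ when $(n,g)\neq(2,2)$ is the technical core; everything else is formal descent of factoriality along the torsor.
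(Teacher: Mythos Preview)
Your proposal has two genuine gaps, one in each half of the lemma.

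\textbf{Local factoriality.} Your descent argument is circular: Theorem~\ref{thm:slsqfactor} is proved in the paper \emph{using} this lemma --- Dr\'ezet's criterion takes local factoriality of the total space $\Hom(\pi,\GL_n)$ as an input hypothesis, not a conclusion. Even setting the circularity aside, the argument does not work: you would need $\mathrm{Cl}$ of the base of the torsor to vanish, which is never established, and in any case ``remove a codimension-$2$ subset'' preserves the global class group of a normal variety but not local factoriality at the removed points. The paper's route is entirely different and much shorter: $\Hom(\pi,\GL_n)$ is a complete intersection by Theorem~\ref{thm:characterproperties}(2), hence Cohen--Macaulay, and by a theorem of Grothendieck (cited from \cite[Theorem~3.12]{KaledinLehnSorger}) a complete intersection satisfying $(R_3)$ is locally factorial. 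Since the free (= brick) locus is smooth, $(R_3)$ is exactly the statement that the non-free locus has codimension $\geq 4$. This is why the lemma packages local factoriality together with the codimension claim; you never invoke the complete intersection property, and without it there is no substitute for Grothendieck's theorem here.

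\textbf{Codimension $\geq 4$.} You correctly compute that Theorem~\ref{t:cbnormalChar} gives $\mathrm{codim}_{\Hom}\,\xi^{-1}(C_\nu) = \tfrac{1}{2}(\dim\GCharg{\GL_n}-\dim C_\nu) \geq 2$ and correctly flag that this is insufficient, but your proposed fix (``attained only on the closed orbit locus'', ``deformation-to-normal-cone reducing to smaller $n$'') is not a working argument. The paper's fix is concrete and avoids any induction. First, one applies Lemma~\ref{lem:stratadimChar}(3), not just (2): for all strata except the two listed exceptions one has $\dim\GCharg{\GL_n}-\dim C_\nu \geq 8$, so already $\mathrm{codim}\,\xi^{-1}(C_\nu) \geq 4$ and the entire preimage can be discarded. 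The two surviving strata are both of type $(1,a;1,b)$ with the two simples nonisomorphic. The key observation is then purely representation-theoretic: over such a stratum, any representation is either semisimple or an indecomposable extension of two nonisomorphic simples, and the latter is automatically a brick (unique simple sub, unique simple quotient, nonisomorphic). Hence the non-free locus over these strata equals the semisimple locus. A direct computation shows that the semisimple part $\xi^{-1}(C_\nu)_{\mathrm{ss}}$ has codimension in $\Hom(\pi,\GL_n)$ equal to $\mathrm{codim}_{\GCharg{\GL_n}} C_\nu + (\dim\End(M)-1)$, which is $\geq 4$ by Lemma~\ref{lem:stratadimChar}(2). No further stratification or deformation argument is needed.
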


\begin{proof}
By Theorem~\ref{thm:characterproperties}(2), $\Hom(\pi,\GL_n)$ is a complete intersection and hence Cohen-Macaulay. Thus, it satisfies $(S_2)$. By a theorem of Grothendieck, \cite[Theorem 3.12]{KaledinLehnSorger}, in order to show that $\Hom(\pi,\GL_n)$ is locally factorial, it suffices to check that it satisfies $(R_3)$ too. We need to check the second claim that the complement to $\Hom(\pi,\GL_n)_{\free}$ in $\Hom(\pi,\GL_n)$ has codimension at least	four.

Note that the free locus is the same as the locus of representations whose endomorphism algebra has dimension one. These are the ``bricks''. We represent $\Hom(\pi,\GL_n)$ as the union of preimages of the (finitely many) strata, and consider over each such preimage the non-free locus. 
	
If the preimage of the stratum has codimension at least four, it can be ignored. Thus, we just need to show that the complement to $\xi^{-1}(C)_{\mathrm{free}}$ has codimension at least four for those strata $C$ with $\mathrm{codim} \ \overline{\xi^{-1}(C)} \le 3$. Since we are explicitly excluding the case $(n,g) = (2,2)$, Theorem~\ref{t:cbnormalChar}, together with Lemma~\ref{lem:stratadimChar}(3), imply that we are reduced to considering the cases $(n,g) = (3,2)$ and $\nu = (1,2;1,1)$, or $(n,g) = (2,3)$ and $\nu = (1,1;1,1)$.     

The semi-simple part of a fibre $\xi^{-1}(M)$ is the $\GL$-orbit of $M$, which has dimension $n^2 - \dim \End(M)$. If $\xi^{-1}(C)_{\mathrm{ss}}$ denotes the semi-simple part of the preimage of the stratum $C$ then
$$
\mathrm{codim}_{\Hom(\pi,\GL_n)} \, \xi^{-1}(C)_{\mathrm{ss}} = \dim \Hom(\pi,\GL_n) - \dim C -(n^2 - \dim \End(M))
$$
and the difference $\mathrm{codim}_{\Hom(\pi,\GL_n)} \, \xi^{-1}(C)_{\mathrm{ss}} - \mathrm{codim}_{\GChar{\GL_n}{g}} \,  C$ equals 
$$
 2 n^2 g - (n^2 -1) - \dim C - (n^2 - \dim \End(M)) - (2n^2(g-1) + 2 - \dim C) = \dim \End(M) - 1. 
$$
That is, the semi-simple part of the preimage $\xi^{-1}(C)$ has codimension (in $\Hom(\pi,\GL_n)$) at least the codimension of $C$ itself (in $\GCharg{\GL_n}$). Thus if $C$ has codimension at least four (which is the case for us), then we can ignore the semi-simple part of $\xi^{-1}(C)$.
	
Next, if we consider a stratum $C$ of type $(1,a;1,b)$, note that every representation in this stratum is either semi-simple or an indecomposable extension of two non-isomorphic representations. The latter type is a brick, since there is a unique simple quotient and a unique simple subrepresentation and the two are nonisomorphic. Therefore applying the previous paragraph together with Lemma~\ref{lem:stratadimChar}(2) shows that we can ignore $\xi^{-1}(C)$ (the non-free locus has overall codimension at least four). This proves the final assertion.
\end{proof}

If $\Hom(\pi,\SL)_{\free}$ denotes the open subset of $\Hom(\pi,\SL)$ where $\PGL$ acts freely then Lemma~\ref{lem:charSfiberbundle} implies that the statement of Lemma~\ref{lem:freecompcharvariety} holds for $\Hom(\pi,\SL)_{\free}$ too. 

\begin{proof}[Proof of Theorem \ref{thm:slsqfactor} for $\GCharg{\SL_n}$ and $\GCharg{\GL_n}$]
  Let $X := \GCharg{\SL_n}$ or $\GCharg{\GL_n}$.
  First we suppose that  $n,g > 1$ and $(n,g) \neq (2,2)$.
  Let $X_s$ denote the dense open subset of $X$ consisting of simple representations and $\Hom(\pi,\G)_s$ its preimage in $\Hom(\pi,G)$. Then $\xi : \Hom(\pi,G) \rightarrow X_s$ is a principal $\mathrm{PGL}$-bundle. Moreover, by Lemma \ref{lem:stratadimChar}(2), the complement to $X_s$ has codimension at least $4$ in $X$. We are therefore in the situation where we can apply the results of Dr\'ezet's Theorem to $X$. 

The stratum $C_{\rho}$ of type $\rho = (n,1)$ is contained in the closure of all other strata in $X$. If $y \in T(x)$ is a lift in $\Hom(\pi,G)$ of a point $x$ of $C_{\rho}$ then $y$ corresponds to the representation $\C^{\oplus n}$, where $\C$ denotes here the trivial $\pi$-module. Therefore $\PGL_y = \PGL_n$ has no non-trivial characters. In particular, $\PGL_y$ will act trivially on $L_y$ for any $\PGL$-equivariant line bundle on $\Hom(\pi,G)$. Hence, we deduce from Dr\'ezet's Theorem \cite[Th\'eor\`em A]{Drezet} that $X$ is factorial at every point of the closed stratum $C_{\rho}$.

Now consider an arbitrary stratum $C_{\tau}$ in $X$. If $X$ is factorial at one point of the stratum then it will be factorial at every point in the stratum. On the other hand, the main result of \cite{OpenFactorial} says that the subset of factorial points of $X$ is an open subset. Since this open subset is a union of strata and contains the unique closed stratum, it must be the whole of $X$. This completes the proof in these cases.

In the case that $(n,g)=(2,2)$, as observed in \cite[Remark
4.6]{KaledinLehn}, see also \cite{KaledinLehnSorger} and
\cite[Proposition 7.3]{Fu-Qfactor}, the varieties $\GCharg{\GL_n}$ and
$\GCharg{\SL_n}$ are $\mathbb{Q}$-factorial (but not locally factorial).  In
the case $g=1$, by Proposition \ref{prop:genusone},
it follows that $\GCharg{\GL_n}$ and $\GCharg{\SL_n}$ are $\mathbb{Q}$-factorial, being finite quotients of smooth (hence locally factorial) varieties (see, e.g.,
 \cite[Section 2.4]{BellSchedler2large} and the references therein).
\end{proof}
\begin{remark}
A similar analysis has been performed in \cite{ST-srmqvcvps} in order to classify which moduli spaces of semi-simple representations of an arbitrary multiplicative deformed preprojective algebra admit symplectic resolutions. 
\end{remark}

\section{Arbitrary type $A$ groups}\label{sec:generaltypeAchar}
Now let $\G$ be connected reductive, all of whose simple quotients are of type $A$.  We begin this section by stating (but not yet proving) a refined version of Theorem \ref{t:class-res}, giving the closest thing to a symplectic resolution that can exist in general. Then we explain a general procedure to reduce statements about $\G$ to those about a finite covering, and hence to the case of $\SL_n$.  This is already enough, thanks to Section \ref{s:gl-sl}, to prove Theorems \ref{thm:slsymp} and \ref{thm:slsqfactor}, which we do in section~\ref{sec:31proof}. Next, we explain how to estimate codimension of fixed loci under translation by finite-order central elements of $\G$.  This, together with the preceding reduction, is enough to prove Proposition \ref{p:codim}.  Finally, we put all these results together and prove 
Theorems \ref{t:rmm} and \ref{t:class-res}.

\subsection{Relative minimal models}\label{ss:min}

Most of our main results are consequences of the more general Theorem~\ref{t:rmm}, which describes for \emph{all}
character varieties of type $A$ the closest thing to a symplectic resolution that can exist. Namely, a \emph{$\mathbb{Q}$-factorial terminalization} $f \colon X \to Y$ of $Y$ is a $\Q$-factorial variety $X$ with terminal
singularities and a crepant projective birational morphism
$f \colon X \to Y$. Such pairs can always be constructed as a special
type of \emph{relative minimal model} thanks to \cite[Corollary
1.4.3]{BCHM}; hence we will sometimes use this terminology. Note that $f$ is actually \emph{maximal} crepant in the sense that any
further crepant proper birational morphism $X' \to X$ is an
isomorphism.

Given a connected reductive group $\G$ we write, as in section~\ref{ss:class}, $\G = (H \times K)/Z$ where $K$ is semi-simple and $Z < H \times Z(K)$ is a finite subgroup with $Z \to H \times K \to H$ injective. Then $\GCharg{\G} = \GCharg{H \times K}/Z^{2g}$, where the $Z^{2g}$ action on the product $\GCharg{H \times K} = \GCharg{H} \times \GCharg{K}$ is free because of the injectivity of $Z \to H$. It will be convenient also to write $K = K_0/C_0$ where $K_0 = \prod_{i=1}^m \SL_{n_i}$ is simply-connected, and $C_0 < Z(K_0)$. Then $\G$ is a finite quotient of the product $\mathbb{G}:=H \times K_0$ of a torus with special linear groups.

Finally, in the situation of genus one, we will also need the
intermediate group
$K_1 = \prod_{i=1}^{m-m_1} \SL_{n_i} \times \PGL_2^{m_1}$, which fits into a chain $K_0 \twoheadrightarrow K_1 \twoheadrightarrow K$, such that $m_1$ is maximal.  In other words, $K_1$ is the quotient of $K_0$ by the subgroup of $C_0$ generated by elements which are $-I_2$ in some factor $\SL_2$ and the identity in other factors. Let $C_1$ be the quotient of $C_0$ such that $K=K_1/C_1$.


Note that $\GCharg{K_0} = \prod_i \GCharg{\SL_{n_i}}$; each factor $\GCharg{\SL_{n_i}}$ either admits a symplectic resolution (when $g=1$ or $(n,g)=(2,2)$ by Theorem \ref{t:constr-res}), or is $\Q$-factorial terminal (by Theorem \ref{thm:slsqfactor} and Proposition \ref{p:codim} for the $\SL_n$ case).
Similarly, in the case $g=1$,
$$
\GChar{K_1}{1} = \prod_{i=1}^{m-m_1} \GChar{\SL_{n_i}}{1} \times \GChar{\PGL_2}{1}^{m_1}.
$$
In this case each factor admits a symplectic resolution by Theorem \ref{t:constr-res}. Thus, by taking symplectic resolutions of factors admitting them, we obtain a $\Q$-factorial terminalization $\widetilde{\GCharg{K_0}} \to \GCharg{K_0}$ for $g > 1$ and $\widetilde{\GChar{K_1}{1}} \to \GChar{K_1}{1}$ for $g=1$. To see that the result is indeed $\Q$-factorial, we note that it is a product of $\Q$-factorial varieties, and apply the main result of \cite{OpenFactorial}.

In general, automorphisms do not lift to $\Q$-factorial terminalizations. However, we claim that a $\Q$-factorial terminalization $\widetilde{\GCharg{\G}}$ of $\GCharg{G}$ may be obtained from the aforementioned one by taking a finite quotient:

\begin{thm} \label{t:rmm} 
  \begin{itemize}
  \item[(i)] If $g=1$,
    a 
    $\Q$-factorial terminalization of $\GChar{K}{1}$ is given by the quotient of $\widetilde{\GChar{K_1}{1}}$ by a (unique) lift of the action of $C_1^{2}$.
      \item[(ii)] If $g \geq 2$, a $\Q$-factorial terminalization of $\GCharg{K}$
    can be obtained as the quotient of $\widetilde{\GCharg{K_0}}$ 
    by a (unique) lift of the action of $C^{2g}_0$.
  \end{itemize}
In both cases, given a $\Q$-factorial terminalization $\widetilde{\GCharg{K}} \to \GCharg{K}$, then a $\Q$-factorial terminalization $\widetilde{\GCharg{\G}}$ of $\GCharg{\G}$ can be obtained as the \'etale quotient $(\GCharg{H} \times \widetilde{\GCharg{K}})/Z^{2g}$ (with the same singularities as $\widetilde{\GCharg{K}}$).
\end{thm}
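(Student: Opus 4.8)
The plan is to prove Theorem~\ref{t:rmm} in three stages: first establish that the relevant finite groups act on the terminalizations (existence and uniqueness of the lift), then check that the quotient is again a $\mathbb{Q}$-factorial terminalization, and finally handle the last assertion about $\GCharg{\G}$ via étale descent.

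\emph{Step 1: lifting the finite group action.} In case (ii), the group $C_0^{2g}$ acts on $\GCharg{K_0} = \prod_i \GCharg{\SL_{n_i}}$ by translation (coming from the action of $C_0 < Z(K_0)$ on representations of $\pi$ by tensoring with central characters), with quotient $\GCharg{K}$ up to the relevant identifications as in Section~\ref{ss:fin-quotient}. I would argue that this action lifts \emph{uniquely} to the chosen $\mathbb{Q}$-factorial terminalization $\widetilde{\GCharg{K_0}} \to \GCharg{K_0}$: uniqueness is because a $\mathbb{Q}$-factorial terminalization has no nontrivial automorphisms over the base (any such would be a crepant birational self-map, hence an isomorphism fixing the generic point, hence the identity, since $\widetilde{\GCharg{K_0}}$ and $\GCharg{K_0}$ share a dense open smooth locus on which the map is an isomorphism); existence follows because each $\GCharg{\SL_{n_i}}$ either admits a symplectic resolution, which is canonical when it comes from the Hilbert scheme / blowup of the reduced singular locus and hence functorial for automorphisms preserving the singular locus, or is already $\mathbb{Q}$-factorial terminal, where we take $\widetilde{\GCharg{\SL_{n_i}}} = \GCharg{\SL_{n_i}}$ and there is nothing to lift. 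The translation action preserves the singular locus (it is an automorphism of the variety), so it lifts to the blowup of the reduced singular locus in the exceptional $g=2$, $n=2$ factors, and to the Hilbert scheme in the $g=1$ factors by functoriality of the Hilbert--Chow morphism. Case (i) is identical with $K_0$ replaced by $K_1$ and $C_0^{2g}$ by $C_1^2$, using that $\GChar{K_1}{1}$ is a product of factors each admitting a \emph{canonical} symplectic resolution from Theorem~\ref{t:constr-res}(i).

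\emph{Step 2: the quotient is a $\mathbb{Q}$-factorial terminalization.} Write $\widetilde{\GCharg{K}} := \widetilde{\GCharg{K_0}}/C_0^{2g}$ (resp.\ $\widetilde{\GChar{K_1}{1}}/C_1^2$). There is an induced projective birational morphism $\widetilde{\GCharg{K}} \to \GCharg{K}$. It is crepant because crepancy can be checked on the smooth locus (the canonical bundles are reflexive, so a numerical/discrepancy computation over codimension one suffices) and upstairs $\widetilde{\GCharg{K_0}} \to \GCharg{K_0}$ is crepant while the quotient maps $\widetilde{\GCharg{K_0}} \to \widetilde{\GCharg{K}}$ and $\GCharg{K_0} \to \GCharg{K}$ are quotients by a \emph{finite} group; here one must check the ramification divisors match up, which follows because the $C_0^{2g}$-action on $\GCharg{K_0}$ is free in codimension one (indeed, by Section~\ref{s:gl-sl} and the fixed-point codimension estimates, fixed loci of nontrivial central translations have codimension $\geq 2$ — this is exactly the kind of estimate invoked for Proposition~\ref{p:codim}), so neither quotient is ramified in codimension one and discrepancies are preserved. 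Terminality of $\widetilde{\GCharg{K}}$ follows because a finite quotient of a terminal variety by a group acting freely in codimension one is terminal (standard, since discrepancies only increase under such quotients and terminal is an open condition on discrepancies), and $\mathbb{Q}$-factoriality of $\widetilde{\GCharg{K}}$ follows from $\mathbb{Q}$-factoriality of $\widetilde{\GCharg{K_0}}$ (a finite quotient of a $\mathbb{Q}$-factorial variety is $\mathbb{Q}$-factorial: a Weil divisor pulls back, some multiple becomes Cartier upstairs, and averaging over the group gives a Cartier multiple of the norm downstairs). For $\mathbb{Q}$-factoriality of $\widetilde{\GCharg{K_0}}$ itself we invoke, as the paragraph before the theorem does, that it is a product of $\mathbb{Q}$-factorial varieties together with the main result of \cite{OpenFactorial}.

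\emph{Step 3: passing from $K$ to $\G$.} Given any $\mathbb{Q}$-factorial terminalization $\widetilde{\GCharg{K}} \to \GCharg{K}$, the $Z^{2g}$-action on $\GCharg{H \times K} = \GCharg{H} \times \GCharg{K}$ is free (because $Z \to H$ is injective, so $Z^{2g}$ already acts freely on $\GCharg{H} = H^{2g}$), hence lifts freely to $\GCharg{H} \times \widetilde{\GCharg{K}}$ (acting through $\GCharg{H}$ on the first factor and through the lift of the $Z \to Z(K)$-translation — or trivially, depending on how $Z$ sits — on the second; the point is the action on the \emph{product} is free). The étale quotient $(\GCharg{H} \times \widetilde{\GCharg{K}})/Z^{2g}$ is then smooth over $\widetilde{\GCharg{K}}$ étale-locally, in particular shares its singularity types, so it is $\mathbb{Q}$-factorial and terminal; and the induced map to $\GCharg{H \times K}/Z^{2g} = \GCharg{\G}$ is projective, birational, and crepant (étale-locally it is just $\mathrm{id} \times (\widetilde{\GCharg{K}} \to \GCharg{K})$ up to the free étale quotient, and crepancy and birationality are étale-local). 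Since $\GCharg{H} = H^{2g}$ is smooth, the singularities of $\GCharg{\G}$ are precisely those of $\GCharg{K}$, consistent with the parenthetical claim.

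The main obstacle I expect is Step~2 — specifically, verifying crepancy of $\widetilde{\GCharg{K}} \to \GCharg{K}$, which requires knowing that the finite central translation actions are free in codimension one on $\GCharg{K_0}$ (equivalently, that no ramification divisors are introduced by either quotient map), so that the crepant discrepancy upstairs descends unchanged. This is precisely a fixed-point codimension estimate for translation by finite-order central elements, which is developed separately in the paper (the section preceding Proposition~\ref{p:codim}); once that input is in hand, the rest is a formal manipulation of discrepancies and the standard behaviour of $\mathbb{Q}$-factoriality and terminality under finite quotients that are unramified in codimension one. A secondary subtlety is ensuring the lift in Step~1 is genuinely canonical on each factor — the Hilbert scheme and the blowup of the reduced singular locus are manifestly functorial, so this is routine, but it is worth stating explicitly since it is what makes the lifted action, and hence the quotient, well-defined.
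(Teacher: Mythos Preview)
Your Steps~1 and~3 match the paper's argument. The genuine gap is in Step~2, specifically the terminality claim. The assertion ``a finite quotient of a terminal variety by a group acting freely in codimension one is terminal'' is false: take $\C^2$ with $\Z/2$ acting by $-1$; the action fixes only the origin (so is free in codimension one), yet the quotient is the $A_1$ Du Val surface singularity, which is canonical but not terminal. Your parenthetical justification (``discrepancies only increase under such quotients'') has the inequality backwards---discrepancies can drop under a finite quotient unramified in codimension one, as this example shows.

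What is actually needed is that the fixed loci of nontrivial elements of $C_j^{2g}$ on the \emph{terminalization} $\widetilde{\GCharg{K_j}}$ have codimension at least~$4$, so that the quotient (a symplectic singularity, being a finite symplectic quotient of one) has singular locus of codimension $\geq 4$ and is therefore terminal by \cite{NamikawaNote}. The codimension estimates you invoke (Propositions~\ref{prop:fixedsmoothcodim4} and~\ref{p:codim-genus1}) bound fixed loci only on the base $\GCharg{K_j}$, not on its partial resolution. The paper closes this gap via Lemma~\ref{l:fix-imm} and Corollary~\ref{c:fix-dim}: for each of the symplectic resolutions $f\colon X \to Y$ used in building $\widetilde{\GCharg{K_j}}$, one has $\dim X^\sigma \leq \dim Y^\sigma$ (proved by showing $f$ restricts to an immersion at generic points of each component of $X^\sigma$, using that fibers of $f$ are isotropic while $X^\sigma$ is smooth symplectic). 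This transfers the codimension-$4$ bound from $\GCharg{K_j}$ to $\widetilde{\GCharg{K_j}}$. You correctly identified Step~2 as the main obstacle, but the obstacle is sharper than you anticipated: you need codimension $\geq 4$, not $\geq 2$, and you need it upstairs, which requires this additional comparison lemma.
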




\subsection{Reduction to $\SL$ case}
\label{ss:fin-quotient}
There exists a product of special linear groups and a torus $H$ such that $\G$ is the quotient of $\mathbb{G} := H \times \prod_i \SL(n_i,\C)$ by a finite central subgroup, call it $Z_0$ (to distinguish from the group $Z$ appearing in Section \ref{ss:class}, which is the quotient of $Z_0$ by $C_0$).

We now have the following general lemma:
\begin{lem}\label{lem:char-central-quotient}
  Let $G_0$ be an algebraic (or topological) group and $\phi: G_0 \to G_1$ a surjective homomorphism with central finite (or discrete) kernel $Z_0$. Then we have a $G_1$-equivariant isomorphism $\Hom(\pi,G_1)^\circ \cong \Hom(\pi, G_0)^\circ/Z^{2g}_0$.  
  \end{lem}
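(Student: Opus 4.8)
The plan is to identify $\Hom(\pi, G_0)^\circ$ and $\Hom(\pi, G_1)^\circ$ explicitly using the standard presentation of the surface group $\pi = \langle a_1, b_1, \ldots, a_g, b_g \mid \prod_i [a_i,b_i] = 1\rangle$, so that a homomorphism $\rho\colon \pi \to G$ is the same as a tuple $(\rho(a_i), \rho(b_i)) \in G^{2g}$ satisfying $\prod_i [\rho(a_i),\rho(b_i)] = 1$. First I would observe that since $Z_0$ is central, conjugation in $G_0$ leaves $Z_0$ pointwise fixed, so the commutator $[\cdot,\cdot]\colon G_0 \times G_0 \to G_0$ factors through $\phi \times \phi$; hence the word map $w\colon G_0^{2g} \to G_0$, $w = \prod_i [\cdot,\cdot]$, satisfies $w = \phi^{-1}$-compatibly with the corresponding map on $G_1^{2g}$. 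Consequently post-composition with $\phi$ gives a map $\phi_*\colon \Hom(\pi,G_0) \to \Hom(\pi,G_1)$, which is $G_0$-equivariant (with $G_0$ acting on the target via $\phi$ and conjugation), and this is manifestly $Z_0^{2g}$-invariant where $Z_0^{2g}$ acts by translating the entries $(\rho(a_i),\rho(b_i))$ (this translation indeed preserves the relation, again by centrality).

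The key step is then to show that $\phi_*$ induces an isomorphism onto the identity component after quotienting by $Z_0^{2g}$. Surjectivity onto $\Hom(\pi,G_1)^\circ$: given $\rho_1\colon \pi \to G_1$ in the identity component, lift the $2g$ group elements $\rho_1(a_i),\rho_1(b_i)$ to arbitrary preimages $A_i, B_i \in G_0$; then $\prod_i [A_i,B_i]$ lies in $Z_0$ (since it maps to $1$ in $G_1$), but there is no reason it equals $1$. Here is where I expect the main obstacle: one must show that on the identity component this obstruction class in $Z_0$ vanishes, i.e., that the preimages can be chosen so that $\prod_i[A_i,B_i]=1$. For $g \geq 1$ this follows because $w\colon G_0^{2g} \to Z_0 \subseteq G_0$ restricted to the subgroup generated by a single $[A_1, B_1]$-type pair is already surjective onto the relevant component structure — more precisely, the image $w(G_0^{2g})$ is a connected subset containing $1$, hence contained in the identity component of its target, and since the fibers of $\phi_*$ over the identity component are exactly $Z_0^{2g}$-orbits one checks the obstruction is trivial by a connectedness/deformation argument: the set of $(A_i,B_i)$ with $\prod[A_i,B_i] \in Z_0$ is a union of $|Z_0|$ translates, and only the one through the identity maps to $\Hom(\pi,G_1)^\circ$. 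Injectivity of the induced map on the quotient: if $\phi_*(\rho) = \phi_*(\rho')$ then $\rho(a_i) = z_i \rho'(a_i)$ and $\rho(b_i) = z_i' \rho'(b_i)$ for elements $z_i, z_i' \in Z_0$, exhibiting $\rho$ and $\rho'$ in the same $Z_0^{2g}$-orbit; one then checks freeness is not needed, only that the orbit map is well-defined, which it is.

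Finally I would assemble these: $\phi_*$ restricts to a surjection $\Hom(\pi,G_0)^\circ \to \Hom(\pi,G_1)^\circ$ whose fibers are precisely $Z_0^{2g}$-orbits, and the $G_1$-action is compatible, giving the desired $G_1$-equivariant isomorphism $\Hom(\pi,G_1)^\circ \cong \Hom(\pi,G_0)^\circ / Z_0^{2g}$. The one subtlety worth spelling out carefully in the write-up is why the identity component $\Hom(\pi, G_0)^\circ$ maps \emph{onto} $\Hom(\pi,G_1)^\circ$ rather than a proper union of components — this is the heart of the matter and uses that $\phi_*$ is open with connected fibers (the fibers being torsors under the connected-if-$Z_0$-trivial... no: $Z_0$ is finite, so fibers are finite), hence $\phi_*^{-1}(\Hom(\pi,G_1)^\circ)$ is a union of connected components of $\Hom(\pi,G_0)$, permuted transitively by $Z_0^{2g}$, and contains the identity representation, so a $Z_0^{2g}$-orbit of components; intersecting with the single component $\Hom(\pi,G_0)^\circ$ and using that the $Z_0^{2g}$-translates of $\Hom(\pi,G_0)^\circ$ need not be distinct, one concludes the image is all of $\Hom(\pi,G_1)^\circ$. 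I would phrase this last point using the fact (from \cite{LiCharacter} or directly) that the identity components correspond under $\phi_*$ because the trivial representation lifts to the trivial representation.
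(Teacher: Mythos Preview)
Your approach is essentially the same as the paper's: both use the surface-group presentation, lift generators of a homomorphism $\pi \to G_1$ to $G_0$, and observe that the obstruction $\prod_i [A_i,B_i] \in Z_0$ is well-defined (commutators kill central factors) and locally constant. The paper organizes this slightly more cleanly by decomposing $\Hom(\pi,G_1) = \bigsqcup_{z \in Z_0} R_z$ according to the obstruction class, identifying $R_1 = \Hom(\pi,G_0)/Z_0^{2g}$, and then observing that the identity component of $\Hom(\pi,G_1)$ sits inside $R_1$.

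Your middle paragraph on surjectivity wanders: the sentence about $w(G_0^{2g})$ being a connected subset containing $1$ is not the right mechanism (that image lies in $G_0$, not in $Z_0$, and its connectedness does not directly force the obstruction to vanish). The correct argument is the one you arrive at in your final paragraph: $\phi_*$ is open (it is the restriction to a $Z_0^{2g}$-invariant subvariety of the quotient map $G_0^{2g} \to G_1^{2g}$) and closed (finite fibers), so the image of the connected component $\Hom(\pi,G_0)^\circ$ is open, closed, nonempty, and connected in $\Hom(\pi,G_1)$, hence equals $\Hom(\pi,G_1)^\circ$. That is all that is needed; you do not need transitivity of the $Z_0^{2g}$-action on components, nor any appeal to \cite{LiCharacter}. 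In your write-up, drop the false starts and lead with this open-and-closed-image argument.
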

  \begin{proof}
    Let $F$ be the free group on $2g$ generators. Then $\Hom(\pi,Z_0) = \Hom(F,Z_0) = Z^{2g}_0$ because the abelianization of $\pi$ and $F$ are both $\Z^{2g}$.
Any    homomorphism $\pi \to G_0$ can be lifted arbitrarily on generators $x_i, y_i$ of $F$, so that the image of the relation $\prod [x_i, y_i]$ is some element $z \in Z_0$. This defines
a connected component $R_z$ of $R:=\Hom(\pi, G_0)$, namely the image of the subset of $\Hom(F, G_0) \cong G^{2g}_0$ where $\prod_i [x_i, y_i] \mapsto z$, under composition with $\phi$.  Note that composition with $\phi$ identifies with the quotient map by the free action of $Z^{2g}_0$ on $G^{2g}_0$. The connected component of the trivial homomorphism of $\Hom(\pi, G_0)$ is then $R_1 = \Hom(\pi, G_0)^\circ/Z^{2g}_0$. Finally, since $Z_0$ is central in $G_0$, we have that the adjoint action of $G_0$ on $\Hom(\pi,G_0)$ commutes with the action of $Z^{2g}_0$. On the quotient $\Hom(\pi,G_0)/Z^{2g}_0$, this factors to an adjoint action of $G_1$. Then by construction the adjoint actions of $G_1$ on both sides of the isomorphism match.
\end{proof}
Specialising to our situation we have:

\begin{cor}\label{cor:sympZfinitequot}
	 With notation as above, $\GCharg{\G} \cong \GCharg{\mathbb{G}}/Z_0^{2g}$. The action of $Z^{2g}_0$ on $\GCharg{\mathbb{G}}$ is symplectic.  
\end{cor}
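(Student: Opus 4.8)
The statement to prove is Corollary \ref{cor:sympZfinitequot}: $\GCharg{\G} \cong \GCharg{\mathbb{G}}/Z_0^{2g}$, and the $Z_0^{2g}$-action on $\GCharg{\mathbb{G}}$ is symplectic.

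The plan is to deduce both assertions from Lemma \ref{lem:char-central-quotient} (the general statement $\Hom(\pi,G_1)^\circ \cong \Hom(\pi,G_0)^\circ/Z_0^{2g}$) together with standard facts about GIT quotients and the Goldman Poisson structure. First I would apply Lemma \ref{lem:char-central-quotient} with $G_0 = \mathbb{G} = H \times \prod_i \SL(n_i,\C)$ and $G_1 = \G$, using the surjection $\phi \colon \mathbb{G} \twoheadrightarrow \G$ with central finite kernel $Z_0$, to obtain a $\G$-equivariant isomorphism $\Hom(\pi,\G)^\circ \cong \Hom(\pi,\mathbb{G})^\circ/Z_0^{2g}$. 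Since $Z_0$ is central, the action of $Z_0^{2g}$ on $\Hom(\pi,\mathbb{G})^\circ$ commutes with the adjoint $\mathbb{G}$-action, and the $\G$-action on the quotient is the induced one. Then I would take affine GIT quotients of both sides by the relevant reductive groups: on the left by $\G$, on the right first by $\mathbb{G}$ and note that the residual $Z_0^{2g}$-action descends to $\GCharg{\mathbb{G}}$. Because $Z_0^{2g}$ is finite and commutes with $\mathbb{G}$, and because for a finite group acting on an affine variety the quotient commutes with any other good quotient (on coordinate rings: $(A^{Z_0^{2g}})^{\mathbb{G}} = (A^{\mathbb{G}})^{Z_0^{2g}} = A^{\mathbb{G} \times Z_0^{2g}}$), we get
$$
\GCharg{\G} = \bigl(\Hom(\pi,\mathbb{G})^\circ/Z_0^{2g}\bigr)\git\G = \Hom(\pi,\mathbb{G})^\circ \git (\mathbb{G} \times Z_0^{2g}) = \bigl(\Hom(\pi,\mathbb{G})^\circ \git \mathbb{G}\bigr)/Z_0^{2g} = \GCharg{\mathbb{G}}/Z_0^{2g}.
$$

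For the second assertion, that the $Z_0^{2g}$-action on $\GCharg{\mathbb{G}}$ is symplectic, I would argue as follows. The Poisson structure on $\GCharg{\mathbb{G}}$ is Goldman's, coming from the description as a (quasi-)Hamiltonian reduction, equivalently via the construction on $\Hom(\pi,\mathbb{G})^\circ$ recalled in the introduction (\cite{GoldmanSymplectic, GoldmanInvariantFunctions, Liegroupvaluedmoment}). The $Z_0^{2g} = \Hom(\pi,Z_0)$ action on $\Hom(\pi,\mathbb{G})^\circ$ is by pointwise multiplication by central elements; since $Z_0$ is central, this is the restriction of an action by a subgroup of $\Hom(\pi, Z(\mathbb{G}))$, and the translation action of $Z(\mathbb{G})^{2g}$ (indeed of the whole identity component, which acts through $\GCharg{Z(\mathbb{G})^\circ}$) preserves the quasi-Hamiltonian 2-form and the moment map, hence descends to a Poisson automorphism of the reduction. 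Concretely: the finite group $Z_0^{2g}$ acts on $\GCharg{\mathbb{G}}$ preserving its open dense symplectic leaf $C_{(1,\ldots)}$ (simple representations) — since translating a simple representation by a central character keeps it simple — and it acts there by symplectomorphisms because the Goldman form is translation-invariant; a Poisson automorphism of a normal variety that is symplectic on the smooth symplectic locus is symplectic. This gives the claim.

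The main obstacle I expect is the bookkeeping around connected components and the interaction of the finite quotient with GIT. One must be careful that Lemma \ref{lem:char-central-quotient} is about the \emph{identity components} $\Hom(\pi,-)^\circ$, and that taking $Z_0^{2g}$-quotients really does send $\Hom(\pi,\mathbb{G})^\circ$ onto $\Hom(\pi,\G)^\circ$ (this is exactly what the lemma provides, via $R_1$). The GIT interchange step — that $(-/Z_0^{2g})\git\G = (- \git \mathbb{G})/Z_0^{2g}$ — is routine at the level of invariant rings once one knows $Z_0^{2g}$ is finite and commutes with $\mathbb{G}$, but one should remark that $\mathbb{G}$ and $Z_0^{2g}$ together generate a reductive group (an extension of $\G$ by $Z_0^{2g}$, or simply $\mathbb{G} \times Z_0^{2g}$ acting through its quotient) so that the good quotient in the middle exists. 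For the symplectic claim, the only subtlety is to phrase "symplectic action of a finite group" correctly — it means the induced action on the smooth symplectic locus preserves the 2-form — and to invoke translation-invariance of Goldman's form; no computation is needed beyond citing \cite{GoldmanSymplectic} and the quasi-Hamiltonian picture. None of these steps is deep; the corollary is essentially a formal consequence of the preceding lemma.
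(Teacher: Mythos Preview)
Your proposal is correct and follows essentially the same approach as the paper: apply Lemma~\ref{lem:char-central-quotient}, pass to adjoint/GIT quotients using that $Z_0^{2g}$ commutes with the $\mathbb{G}$-action, and then invoke Goldman's construction for the symplectic claim. The paper's argument is slightly terser---it notes that $\Hom(\pi,\mathbb{G})$ is already connected (since $\mathbb{G}$ is a product of a torus and $\SL_{n_i}$'s) so the superscript $\circ$ can be dropped, and it phrases the symplectic assertion as ``the adjoint action of $Z_0$ on the Lie algebra of $\mathbb{G}$ is trivial''---but this is the same content as your translation-invariance remark.
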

\begin{proof}
	Note that $\Hom(\pi,\mathbb{G})$ is connected. By Lemma~\ref{lem:char-central-quotient}, we have established
  $\Hom(\pi,\G)^\circ\cong \Hom(\pi,\mathbb{G})/Z_0^{2g}$, compatibly with the adjoint actions.
  Therefore, taking the
  adjoint quotients, we obtain the desired isomorphism. The fact that
  the action of $Z^{2g}_0$ is symplectic follows from the
  construction of \cite{GoldmanSymplectic}, since the adjoint action
  of $Z_0$ on the Lie algebra of $\mathbb{G}$ is trivial.
  \end{proof}

  \begin{remark}
    More generally, the proof of Lemma \ref{lem:char-central-quotient} above shows that, for every connected component $X \subseteq \Hom(\pi, G_0)$, then $X/Z^{2g}_0$ is a connected component of $\Hom(\pi, G_1)$, which is the image of $X$ under composition with the surjection $\phi \colon G_0 \to G_1$. Still more generally, we can replace $\pi$ by a quotient $F / K$ of a free group $F$ on $\ell$ generators by any subgroup $K < [F,F]$ generated by commutators.  Then every connected component of $\Hom(F/K, G_1)$ maps, under composition with $\phi$, onto a connected component of $\Hom(F/K, G_1)$, and this map is the quotient map by the free $Z^\ell_0$ action.  
    \end{remark}
\begin{remark}An alternative proof of  Lemma \ref{lem:char-central-quotient} can be given using group cohomology:
   the exact sequence $1 \to Z_0 \to G_0 \to G_1 \to 1$ can be viewed as an exact sequence of nonabelian $\pi$-modules with trivial $\pi$-action. Since $Z_0$ is abelian,
the long exact sequence for nonabelian cohomology extends to $H^2(\pi,Z_0)$. Since the action is trivial, on zeroth cohomology this includes the original exact sequence. As $H^1(\pi,M)=\Hom(\pi,M)$ for $M$ a module with trivial action, we obtain the exact sequence:
$$
1 \to \Hom(\pi,Z_0) \to \Hom(\pi,G_0) \to \Hom(\pi,G_1) \to H^2(\pi,Z_0).
$$
Since $H^2(\pi,Z_0)$ is discrete, the image of
$\Hom(\pi,G_1)^{\circ}$ under the connecting morphism
$\Hom(\pi,G_1) \to H^1(\pi,Z_0)$ is trivial.
Thus
$\Hom(\pi,G_1)^{\circ} = \Hom(\pi,G_0)^\circ/Z^{2g}_0$.
\end{remark}


 
\subsection{Proof of Theorems \ref{thm:slsymp} and \ref{thm:slsqfactor}}\label{sec:31proof}
In Section \ref{s:gl-sl}, we proved these theorems for the cases $\GCharg{\GL_n}$ and $\GCharg{\SL_n}$. They therefore hold for the product $\mathbb{G} = H \times \prod_i \GCharg{\SL_{n_i}}$ as follows. The product of symplectic singularities is clearly symplectic.  For the $\mathbb{Q}$-factorial property, this not obvious but proved in \cite{OpenFactorial}. In the special case when $g > 2$,  or the case $g=2$ and $\mathbb{G}$ has no factors of the form $\SL_2$, we can also use the argument given in Section \ref{s:gl-sl}---deducing factoriality from a quotient---verbatim to deduce the property for $\mathbb{G}$.

By  \cite[Proposition 2.4]{Beauville}, the finite  symplectic quotient $\GCharg{\G}$ of $\GCharg{\mathbb{G}}$ also has symplectic singularities.  This result is also known for the $\mathbb{Q}$-factorial property: see \cite[Section 2.4]{BellSchedler2large} and the references therein.


\subsection{Codimension of fixed points: genus $g \geq 2$}
%
We consider the action of $Z^{2g}_0$ on the open subset
$\CharS(\mathbb{G},g)_{\text{irr}}$ of $\GCharg{\mathbb{G}}$
consisting of products of irreducible representations. Since
$\GCharg{\mathbb{G}}$ is a product of the character varieties of its
factors, it is irreducible by Theorem
\ref{thm:characterproperties}(1), Lemma \ref{lem:charSfiberbundle},
and Proposition \ref{prop:genusone}.  Thus, if nonempty,
$\GCharg{\mathbb{G}}_{\text{irr}}$ is dense.  It is nonempty if $g > 1$
by Theorem \ref{thm:characterproperties}(3): given an irreducible
representation in $\GCharg{\GL_n}$, by rescaling generators we get an irreducible representation in $\GCharg{\SL_n}$, i.e., a representation of
$\GCharg{\GL_n}$ is irreducible if and only if the point of its fiber of
$\GCharg{\GL_n} \to T$ in Lemma \ref{lem:charSfiberbundle} is irreducible as
  an element of $\GCharg{\SL_n}$. 
  
Note that, for $g=1$, $\GCharg{\mathbb{G}}_{\text{irr}}$ is nonempty if and only if $\mathbb{G}$ is abelian, otherwise the generic
  representation of $\Z^2 \to \SL_n$ for $n > 1$ is a direct sum
  of one-dimensional representations.

\begin{prop}\label{prop:fixedsmoothcodim4}
  The set of points in $\CharS(\mathbb{G},g)_{\mathrm{irr}}$ with non-trivial stabilizer under the action of $Z^{2g}_0$ has codimension at least $4(g-1)$.
\end{prop}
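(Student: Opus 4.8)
Here is my plan for proving Proposition \ref{prop:fixedsmoothcodim4}.

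\medskip

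The plan is to reduce the computation to a single factor $\SL_n$ (the torus factor $H$ contributes no fixed points issue since $Z_0^{2g}$ acts freely there via the injectivity $Z_0 \to H$, so actually the entire stabilizer question lives on the $\prod_i \SL_{n_i}$ part). So I would first argue that, writing $\mathbb{G} = H \times \prod_{i=1}^m \SL_{n_i}$, a point of $\CharS(\mathbb{G},g)_{\mathrm{irr}}$ is a tuple $(\rho_H, \rho_1,\ldots,\rho_m)$ with each $\rho_i$ an irreducible representation $\pi \to \SL_{n_i}$, and the action of $z = (z_H, z_1, \ldots, z_m)^{2g} \in Z_0^{2g}$ translates $\rho_i$ by the corresponding tuple in $\Hom(\pi, Z(\SL_{n_i})) = Z(\SL_{n_i})^{2g} = \Z_{n_i}^{2g}$. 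Since $Z_0 \hookrightarrow H \times \prod_i Z(\SL_{n_i})$ and the projection to $H$ is injective, a nonzero element of $Z_0^{2g}$ acts on the $H$-character variety $\GCharg{H} = H^{2g}$ by a nontrivial translation, hence freely. Therefore the fixed-point set in question is empty unless we only consider the subgroup acting trivially on $H$, which is zero; so in fact I should be careful — the relevant statement is about \emph{$\GCharg{\mathbb{G}}$}, and since $Z_0^{2g}$ acts freely, the proposition as literally stated about $\mathbb{G} = H \times \prod \SL_{n_i}$ would be vacuous. Re-reading: the proposition is genuinely about $\mathbb{G}$, and indeed the free action on the $H$-factor makes the fixed locus empty, so the codimension is $+\infty \geq 4(g-1)$; but this is surely not the intended content. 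I expect the real content is the analogous statement for each $\SL_n$ (or for $\prod \SL_{n_i}$, i.e. the group called $K_0$), which is what gets used downstream; so I would phrase and prove it for a single $\SL_n$ and a nonzero element $c \in \Z_n^{2g} = \Hom(\pi, Z(\SL_n))$, then take products.

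\medskip

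For a single $\SL_n$: fix $0 \neq c \in Z(\SL_n)^{2g} \cong \Z_n^{2g}$, corresponding to a homomorphism $\pi^{ab} = \Z^{2g} \to Z(\SL_n) \cong \Z_n$. A point $[\rho] \in \GChar{\SL_n}{g}_{\mathrm{irr}}$ is fixed by translation-by-$c$ iff $\rho \otimes c \cong \rho$ as representations of $\pi$ (here $c$ is viewed as a one-dimensional, i.e. central-scalar-valued, character of $\pi$, well-defined up to the identification, using that it lands in $Z(\SL_n)$ so tensoring preserves the $\SL_n$-valued condition). For an irreducible $\rho$ of dimension $n$, the condition $\rho \cong \rho \otimes c$ with $c$ of exact order $d \mid n$ forces (by Clifford theory / the standard argument) that $\rho = \mathrm{Ind}_{\pi'}^{\pi} \sigma$ where $\pi' \leq \pi$ is the index-$d$ subgroup which is the kernel of $c : \pi \to \Z_d$, and $\sigma$ is an irreducible $(n/d)$-dimensional representation of $\pi'$; conversely such induced representations are fixed (up to the twisting action of $\Z_d = \pi/\pi'$ permuting the summands of the restriction). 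So the fixed locus, for this particular $c$, is the image of a map from (a union of components of) the character variety of the surface group $\pi'$ of the corresponding $d$-fold cover $\Sigma'$ of $\Sigma$, with values in $\GL_{n/d}$ (or a twisted form), intersected with appropriate determinant conditions. The key numerical input is that $\Sigma'$ has genus $g' = d(g-1)+1$, so $\dim \GChar{\SL_{n/d}}{g'} = 2(g'-1)((n/d)^2-1) = 2d(g-1)((n/d)^2 - 1)$, while the map to $\GChar{\SL_n}{g}_{\mathrm{irr}}$ is finite onto its image (generically $d$-to-$1$, by the Galois action). Hence
\begin{equation*}
\dim \GChar{\SL_n}{g} - \dim(\text{fixed locus of }c) \;\geq\; 2(g-1)(n^2-1) - 2d(g-1)\Bigl(\tfrac{n^2}{d^2}-1\Bigr) = 2(g-1)\Bigl(n^2 - \tfrac{n^2}{d} + d - 1\Bigr).
\end{equation*}
Since $d \geq 2$ and $n \geq d$, the quantity $n^2 - n^2/d + d - 1 = n^2(1 - 1/d) + (d-1) \geq n^2/2 + 1 \geq 2$ when $n \geq 2$, giving codimension $\geq 4(g-1)$; and one can check this inequality is sharp exactly in small cases. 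Summing over factors and over the (finitely many) nonzero elements of $Z_0^{2g}$, and using that codimension of a finite union is the minimum codimension, yields the bound.

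\medskip

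I expect the main obstacle to be two-fold. First, making the Clifford-theory description of the fixed locus precise as a statement about varieties (not just about isomorphism classes of representations) and correctly keeping track of determinant/twisting conditions so that the relevant cover's character variety is the right twisted form and the map is genuinely finite onto its image of the stated dimension — this requires identifying the image with a union of strata and controlling the fibers of $\CharS \to \CharS$ induction map. Second, handling the possibility that the fixed $\rho$ need not be induced from the full kernel of $c$: a priori $\rho \cong \rho \otimes c$ only gives that $\rho$ is induced from \emph{some} subgroup on which a power of $c$ becomes trivial, and one has to run the argument with the exact order of $c$ and be sure the bound is uniform over all such $c$; the worst case is $c$ of order exactly $2$, $n = 2$, which is precisely where $g=2$ gives codimension exactly $4(g-1) = 4$ and no slack, consistent with the exceptional role of $(n,g)=(2,2)$ and of $\PGL_2$ elsewhere in the paper. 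Organizing the inequality so that this borderline case is visibly covered (codimension $\geq 4(g-1)$, with equality allowed) is the delicate point.
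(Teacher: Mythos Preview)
Your approach via Clifford theory is genuinely different from the paper's and is viable in principle, but the execution contains two errors, one of which breaks the argument as written.

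\textbf{First, on the setup.} You conflate $Z_0$ with the group $Z$ from Section~\ref{ss:class}. It is $Z$ (the quotient $Z_0/C_0$) whose projection to $H$ is injective, not $Z_0$ itself; $Z_0$ contains $\{1\}\times C_0$, which projects trivially to $H$. So the proposition is \emph{not} vacuous: elements of $C_0^{2g}\subset Z_0^{2g}$ act trivially on the $H$-factor and can have genuine fixed points on the $\SL$-factors. Your instinct to reduce to a single $\SL_n$ is nonetheless correct, just not for the reason you gave.

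\textbf{Second, the dimension count is wrong.} You correctly note that if $[\rho]$ is fixed by $c$ of order $d$, then $\rho\cong\mathrm{Ind}_{\pi'}^{\pi}\sigma$ for an irreducible $\sigma:\pi'\to \GL_{n/d}$, where $\pi'=\ker c$. But the determinant condition $\det\rho=1$ on the induced representation is \emph{not} the condition $\det\sigma=1$; it is the condition that the transfer of $\det\sigma$ to $\pi^{ab}$ equal a fixed character. Since the transfer $\pi^{ab}\to(\pi')^{ab}$ is injective, this cuts down the $2g'$-dimensional torus of possible $\det\sigma$ by only $2g$, not $2g'$. The correct fixed-locus dimension is therefore
\[
2\bigl((n/d)^2-1\bigr)(g'-1)+2(g'-g)=2(g-1)\bigl(n^2/d-1\bigr),
\]
not your $2(g-1)(n^2/d-d)$. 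Equivalently, induction from $\SL_{n/d}$-valued $\sigma$ does \emph{not} surject onto the fixed locus, so your ``finite onto its image'' step does not bound the whole fixed locus. For $(n,d)=(2,2)$ this makes the codimension $4(g-1)$, not $6(g-1)$; indeed your own closing remark that the worst case gives codimension exactly $4(g-1)$ is inconsistent with your displayed formula, which yields $6(g-1)$ there.

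With the corrected count the codimension is $2(g-1)n^2(1-1/d)\ge 4(g-1)$, and the proposition follows. So your global/Clifford approach can be made to work, but requires the right bookkeeping.

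\textbf{Comparison with the paper.} The paper avoids all of this by a local tangent-space argument: at a fixed point $[\rho]$, the action of $\sigma$ on $T_{[\rho]}=H^1(\pi,\mathfrak g_\rho)$ is conjugation by the intertwiner $A$; an Euler-characteristic computation \`a la Goldman gives the fixed-subspace dimension as $2(g-1)\dim\mathfrak g^A+\text{(boundary terms)}$, so the codimension is $2(g-1)(\dim\mathfrak g-\dim\mathfrak g^A)$, and one checks $\dim\mathfrak{sl}_n-\dim\mathfrak{sl}_n^A\ge 2$ by elementary eigenvalue combinatorics. This sidesteps covers, induction, transfers, and determinant conditions entirely; your approach, once fixed, instead yields a global parametrization of the fixed locus, which is more informative but considerably more delicate to set up.
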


In particular, for $g \ge 2$, the codimension is at least four. In the proof we find the precise codimension: see Remark \ref{rem:precise-codim} below.

\begin{proof}
  It suffices to show that for each non-trivial $\sigma \in Z^{2g}_0$, the set of points in $\CharS(\mathbb{G},g)_{\text{irr}}$
  fixed by $\sigma$ has codimension at least $2(g-1)$. Let $\rho \colon \pi \to \mathbb{G}$ with $[\rho] \in \CharS(\mathbb{G},g)_{\text{irr}}$ and assume $[\rho]$ is fixed by $\sigma$. Then $\sigma \cdot \rho \cong \rho$. This implies that there exists $A \in \mathbb{G}$ such that $A^{-1} \rho(f) A = \sigma(f) \rho(f)$ for all $f \in \pi$.

  Let $\ell \geq 2$ be the order of $\sigma$.
Note that, since $\sigma^\ell = 1$, the element $A^\ell$
        is an endomorphism of $\rho$. By assumption this implies that $A^\ell$ is a scalar matrix in each factor of $\mathbb{G}$.  As a result, up to rescaling $A$ in each factor, we can assume $A^\ell=1$, and hence is semi-simple with eigenvalues $\ell$-th roots of unity.

The element $\sigma$ acts on the tangent space of $\CharS(\mathbb{G},g)_{\text{irr}}$ at $[\rho]$ and it suffices to show that the codimension of $(T_{[\rho]} \CharS(\mathbb{G},g)_{\text{irr}})^{\sigma}$ is at least $4$ in $T_{[\rho]} \CharS(\mathbb{G},g)_{\text{irr}}$. To compute the action of $\sigma$ on $T_{[\rho]} \CharS(\mathbb{G},g)_{\text{irr}}$, we twist the action so that $\rho \in \Hom(\pi,\mathbb{G})$ is a fixed point. Namely, define $\tilde{\sigma}$ acting by $(\tilde{\sigma} \tau)(x) = \sigma(x) A \tau(x) A^{-1}$ for all $\tau \in \Hom(\pi,\mathbb{G})$. Then $\tilde{\sigma} \rho = \rho$.

Let $\mf{g}$ be the Lie algebra of $\mathbb{G}$. Let $\mf{g}_\rho$ denote the
representation of $\pi$ obtained by the action $\gamma \cdot x := \Ad(\rho(\gamma))(x)$.
As noted in \cite{GoldmanSymplectic}, we can identify $T_{[\rho]} \CharS(\mathbb{G},g)_{\text{irr}} = H^1(\pi,\mf{g}_\rho)$. Let $[u] \in H^1(\pi,\mf{g}_\rho)$ be represented by the $1$-cocycle $u$. As in \cite[(1.1)]{GoldmanSymplectic}, the tangent vector $[u]$ defines an infinitesimal curve through $\rho$ by $\rho_t(x) = \exp(t u(x)) \rho(x)$. Then 
\begin{align*}
(\tilde{\sigma} \rho_t)(x) & = \sigma(x) A \exp(tu(x)) \rho(x) A^{-1} \\
& = \sigma(x) A \exp(tu(x)) A^{-1} A \rho(x) A^{-1} \\
& = \exp(t A u(x) A^{-1}) \sigma (x)  A \rho(x) A^{-1} \\
& = \exp(t A u(x) A^{-1}) \rho(x),    
\end{align*}
where we have used the fact that $\sigma(x)$ is central. Thus, the action of $\sigma$ on $H^1(\pi,\mf{g}_\rho)$ is given by conjugation by $A$ on
$\mf{g}$. We use the ingenious argument in \cite[Section 1.5]{GoldmanSymplectic} to estimate $\dim H^1(\pi,\mf{g}_\rho)^A$. Since $\Sigma$ is $K(\pi,1)$, $\mf{g}_\rho$ defines a local system on $\Sigma$ of rank $\dim \mf{g}$ with Euler characteristic 
$$
\sum_{i = 0}^2 (-1)^i \dim H^i(\pi,\mf{g}_\rho) = -2(g-1) \dim \mf{g}. 
$$
This formula follows from the fact that, as a "virtual" vector space,
$$
\sum_{i = 0}^2 (-1)^i H^i(\pi,\mf{g}_\rho) = \sum_{i = 0}^2 (-1)^i C^i \o_{\C} \mf{g}
$$
where $C^{\idot}$ are the terms of the (finite-dimensional) simplicial cochain complex computing the cohomology of $\pi$ with coefficients in $\C$. It follows that 
$$
\sum_{i = 0}^2 (-1)^i H^i(\pi,\mf{g}_\rho)^A = \sum_{i = 0}^2 (-1)^i C^i \o_{\C} \mf{g}^A
$$
and hence 
$$
\sum_{i = 0}^2 (-1)^i \dim H^i(\pi,\mf{g}_\rho)^A = -2(g-1) \dim \mf{g}^A.
$$
As noted in Section 1.4 of \cite{GoldmanSymplectic}, $H^0(\pi,\mf{g}) = \End_{\mf{g}}(\mf{g}_{\rho})$ is the infinitesimal stabilizer of $\rho$. The fact that $V$ is irreducible implies that $H^0(\pi,\mf{g}) \cong \mf{z}(\mf{g})$, the centre of $\mf{g}$. This is the Lie algebra of $H^{2g}$, of dimension $2g \dim H$. By Poincar\'e duality, $\dim H^2(\pi,\mf{g}) = 2 g \dim H$ too. Thus,
\begin{align*}
\dim H^1(\pi,\mf{g})^A & = 2(g-1) \dim \mf{g}^A + \dim H^0(\pi,\mf{g})^A + \dim H^2(\pi, \mf{g})^A \\
 & \le 2(g-1) \dim \mf{g}^A + 4 g \dim H;
\end{align*}
in fact we have equality since $\mf{z}(\mf{g})^A = \mf{z}(\mf{g})$ (but we do not need to use this).
Thus, the codimension of $(T_{[\rho]} \CharS(\mathbb{G},g)_{\text{irr}})^{\sigma}$ in $(T_{[\rho]} \CharS(\mathbb{G},g)_{\text{irr}})^{\sigma}$ is (at least) $2(g-1)(\dim \mf{g} - \dim \mf{g}^A)$.
Conjugation by $A$ is non-trivial on at least one simple summand.
Such a summand is of the form $\mf{sl}_m$ for some $m \geq 2$. Therefore, for the dimension estimate we may assume $\mf{g} = \mf{sl}_m$. If the multiplicities of the $\ell$-th roots appearing on the diagonal of $A$ are $m_1, \ds, m_{\ell}$, where $m = m_1 + \cdots + m_{\ell}$, then an easy induction shows that
$$
\dim \mf{sl}_m - \dim \mf{sl}_m^A = m^2 - \sum_{i = 1}^{\ell} m_i^2 \ge 2(t-1),
$$
where $t$ is the number of $m_i$ not equal to $0$. Since $A$ is not the identity matrix, $t \ge 2$. This implies
that $2(g-1)(\dim \mf{sl}_m - \dim \mf{sl}_m^A) \ge 2(g-1)$ as required. 	
\end{proof}

The argument of the proof above does not make use of the product decomposition for $\mathbb{G}$, only for its Lie algebra $\mathfrak{g}$. Hence it is valid with $\mathbb{G}$ replaced by any connected reductive group with type $A$ quotients, and $Z$ any finite central subgroup, replacing the irreducible locus by the locus of representations whose endomorphisms identify with $\mathfrak{z}(\mathfrak{g})$.

\begin{remark}\label{rem:precise-codim}
  We can actually compute the precise codimension as follows. Let $\sigma \in Z^{2g}_0$ be nontrivial. If $\sigma$ has a nontrivial projection to $H$, then it is clear that $\sigma$ has no fixed points, as $H$ is abelian.  So assume that this is not the case.   Let $\sigma_i$ be the
  projection of $\sigma$ to  each factor $\SL(n_i,\C)$, and let $\ell_i \geq 1$ be the minimum positive integer such that $\sigma_i^{\ell_i}$ is a scalar (i.e., the minimum order of a scalar multiple of $\sigma_i$).  Then we claim that:
  $$
  \codim_{\CharS(\mathbb{G},g)_{\mathrm{irr}}} \CharS(\mathbb{G},g)_{\mathrm{irr}}^\sigma = 2(g-1)\sum_i n_i^2(1 - 1/\ell_i).
  $$
  We prove this claim below. 
  In particular, this is at least $4(g-1)$.  The codimension of the nonfree locus under all of $Z^{2g}_0$ is the minimum of the above codimensions; this recovers the statement of Proposition \ref{prop:fixedsmoothcodim4}.

  To prove the claim,  note that it suffices to consider the case of a single factor $\SL(m,\C)$. Up to scaling, as in the proof of the proposition, assume that $\sigma = A \in \SL_n$ has order $\ell$.  Let $\zeta$ be a primitive $\ell$-th root of unity.  
  Conjugating by a permutation matrix, we can assume that for some $m_0, \ldots, m_{\ell-1}$, the first $m_0$ diagonal entries of $A$ are $\zeta^0=1$, the next $m_1$ are $\zeta^1$, the next $m_2$ are $\zeta^2$, and so on.
  Now let $X \in \GL_n(\C)$ be a matrix such that 
  $AXA^{-1} = \lambda X$ for some $\lambda \in \C$. Then $\lambda = \zeta^k$ for some $k \mid \ell$. Then, $X$ must be a block permutation matrix with respect to the decomposition $m=m_0+\cdots+m_{\ell-1}$, expressing the permutation $i \mapsto i+k \pmod{\ell}$.
  For this to be invertible, the blocks must be square, which implies $m_i=m_{i+k}$ for all $i$, with addition taken modulo $\ell$. Now, if $X_1, \ldots, X_{2g}$ are the generating matrices, then we claim that the powers $k_1, \ldots, k_{2g}$ appearing in this calculation must have gcd equal to one. Otherwise, their least common multiple $\ell'$ would be a proper factor of $\ell$, and $A^{\ell'}$ would be a nonscalar automorphism of the representation---impossible by Schur's Lemma. Thus since $m_i = m_{i+k_j}$ for all $i,j$, we get that $m_0 = m_1 = \cdots = m_{k-1}$.  We thus conclude:
  $$
  \dim \mf{sl}_m - \dim \mf{\sl}_m^A = m^2 - \ell(m/\ell)^2 = m^2(1 - 1/\ell).
  $$
  Taking the sum of codimensions over each factor, we obtain the claim.

\end{remark}
The above allows us to prove Proposition \ref{p:codim}:
\begin{proof}[Proof of Proposition \ref{p:codim}]
  Let us first assume that $g=1$ and $\G$ is nonabelian.  We claim that the singular locus has codimension at most two; since $\GCharg{\G}$ is a symplectic singularity, this implies immediately that the codimension is exactly two. Thanks to Corollary~\ref{cor:sympZfinitequot}, it suffices to assume that $\G=\mathbb{G}$. In turn this reduces to the case of $\GChar{\SL_n}{1}$ for $n \geq 2$. Now the statement follows from Proposition \ref{prop:genusone}.

Next assume that $g=2$ and that $\G$ has $\PGL_2$ as a quotient.  We claim again that the singular locus has codimension at most two (hence exactly two). As before, this reduces to the case of $\GChar{\SL_2}{2}$.  Then the statement follows from Lemma \ref{lem:stratadimChar}(1) (see also Section \ref{ss:22}).

Finally, assume that $g \geq 2$ and, if $g=2$, then there are no simple quotients of $\G$ of the form $\PGL_2$. Then the singular locus of
$\GCharg{\mathbb{G}}$ has codimension at least four by Lemma  \ref{lem:stratadimChar}. Its complement, the smooth locus of $\GCharg{\mathbb{G}}$, equals $\GCharg{\mathbb{G}}_{\mathrm{irr}}$ by Lemma \ref{lem:singlocusGoldman}. Therefore, the singular locus of $\GCharg{\G} = \GCharg{\mathbb{G}}/Z_0^{2g}$ is the image under the quotient by $Z^{2g}_0$ of the union of the singular locus of $\GCharg{\mathbb{G}}$ and the non-free locus. Thus we can conclude by Proposition \ref{prop:fixedsmoothcodim4} that the singular locus has codimension at least four.
\end{proof}

\subsection{Genus one case}

We estimate the codimension of the singular locus in genus one:
\begin{prop}\label{p:codim-genus1} Let $\sigma \in Z_0^{2}$ be non-trivial. 
  The set of points in $\GChar{\mathbb{G}}{1}$ with non-trivial stabilizer under the action of $\sigma$ has codimension at least $4$ unless $\sigma$ is equal to minus the identity in a factor of the form $\SL_2$ and the identity in other factors, in which case this locus has codimension two.
\end{prop}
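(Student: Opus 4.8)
The plan is to strip off the torus, use additivity of codimension over the special linear factors, identify each such factor via Proposition~\ref{prop:genusone}, and then reduce to a simple orbit count for a translation action on a symmetric product of $\Cs\times\Cs$.

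First I would dispose of the torus factor. Write $\mathbb{G}=H\times\prod_i\SL(n_i,\C)$ and $Z_0<Z(\mathbb{G})=H\times\prod_i\mu_{n_i}$ with $\mu_{n_i}=Z(\SL(n_i,\C))$. Then $\sigma\in Z_0^2$ has a component in $H^2$ together with components $\sigma_i\in\mu_{n_i}^2$, and $\GChar{\mathbb{G}}{1}=\GChar{H}{1}\times\prod_i\GChar{\SL_{n_i}}{1}$ with $\GChar{H}{1}=H^2$. If the $H^2$-component of $\sigma$ is nontrivial it translates $\GChar{H}{1}$ by a nonzero element, so $\GChar{\mathbb{G}}{1}^{\sigma}=\emptyset$ and there is nothing to prove. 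Otherwise $\sigma$ fixes $\GChar{H}{1}$ pointwise, so $\GChar{\mathbb{G}}{1}^{\sigma}=\GChar{H}{1}\times\prod_i\GChar{\SL_{n_i}}{1}^{\sigma_i}$ and hence $\codim_{\GChar{\mathbb{G}}{1}}\GChar{\mathbb{G}}{1}^{\sigma}=\sum_{i\,:\,\sigma_i\neq1}\codim_{\GChar{\SL_{n_i}}{1}}\GChar{\SL_{n_i}}{1}^{\sigma_i}$. Thus it suffices to estimate, for each nontrivial $\tau\in\mu_m^2$, the codimension of $\GChar{\SL_m}{1}^{\tau}$ in $\GChar{\SL_m}{1}$.

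By Proposition~\ref{prop:genusone} we have $\GChar{\SL_m}{1}\cong(\T\times\T)/\s_m$, which sits inside $\GChar{\GL_m}{1}\cong S^m(\Cs\times\Cs)$ as the fibre over $(1,1)$ of the determinant map $\varrho\colon S^m(\Cs\times\Cs)\to(\Cs)^2$, $\{(x_j,y_j)\}\mapsto(\prod_j x_j,\prod_j y_j)$. Under this description $\tau=(\zeta_1,\zeta_2)$ acts through the free action $(x,y)\mapsto(\zeta_1 x,\zeta_2 y)$ of $\langle\tau\rangle$ on $\Cs\times\Cs$. Let $d\geq2$ be the order of $\tau$; since $\zeta_k^m=1$ we have $d\mid m$, and $\langle\tau\rangle\cong\Z/d$ acts freely on $\Cs\times\Cs$. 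A $\tau$-fixed configuration is therefore a disjoint union of $m/d$ free $\langle\tau\rangle$-orbits; the dense stratum of those with distinct orbits is the configuration space of $m/d$ points of $(\Cs\times\Cs)/\langle\tau\rangle$, so $\dim S^m(\Cs\times\Cs)^{\tau}=2m/d$. The translation action of $\Cs\times\Cs$ preserves $S^m(\Cs\times\Cs)^{\tau}$ and, through $\varrho$, covers a transitive action on $(\Cs)^2$ (since $(a,b)$ rescales $\varrho$ by $(a^m,b^m)$); hence all fibres of $\varrho|_{S^m(\Cs\times\Cs)^{\tau}}$ are isomorphic, and a routine count gives $\dim\GChar{\SL_m}{1}^{\tau}=2m/d-2$ (in particular this fibre is nonempty). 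Since $\dim\GChar{\SL_m}{1}=2(m-1)$, we conclude $\codim_{\GChar{\SL_m}{1}}\GChar{\SL_m}{1}^{\tau}=2m(1-1/d)$.

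Now the estimate is arithmetic. With $d\mid m$ and $d\geq2$: if $m=2$ then $d=2$ and the codimension is $2$; if $m=3$ then $d=3$ and it is $4$; if $m\geq4$ then $2m(1-1/d)\geq 2m\cdot\tfrac12=m\geq4$. So each summand $\codim\GChar{\SL_{n_i}}{1}^{\sigma_i}$ is $\geq 2$, and is $\geq4$ unless $n_i=2$. Hence $\codim_{\GChar{\mathbb{G}}{1}}\GChar{\mathbb{G}}{1}^{\sigma}\geq4$, except when exactly one $\sigma_i$ is nontrivial and its factor is $\SL_2$ --- the exceptional case in the statement --- in which case the codimension is the single value $2$. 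The step I expect to be the main obstacle is the dimension count $\dim\GChar{\SL_m}{1}^{\tau}=2m/d-2$: one must check that the two determinant conditions genuinely cut down $S^m(\Cs\times\Cs)^{\tau}$ by exactly two dimensions and do not leave it empty. The translation argument above settles this; alternatively, in the only case where sharpness is needed, namely $m=2$, one verifies directly that $\GChar{\SL_2}{1}^{\tau}$ is two points for each of the three nontrivial $\tau\in\mu_2^2$.
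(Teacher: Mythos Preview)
Your proof is correct and follows essentially the same route as the paper: reduce to a single $\SL_m$ factor, identify $\GChar{\SL_m}{1}$ with $(\T\times\T)/\s_m$ via Proposition~\ref{prop:genusone}, observe that $\tau$ acts through a free translation on $\Cs\times\Cs$, and count dimensions by partitioning a fixed multiset into free $\langle\tau\rangle$-orbits. The only cosmetic difference is that you pass through the ambient symmetric product $S^m(\Cs\times\Cs)$ and use the $(\Cs)^2$-translation to cut down by the determinant, whereas the paper works directly with multisets satisfying the product constraint and argues that all orbits but one may be chosen freely; both yield the codimension $2m(1-1/d)$ and the same arithmetic conclusion.
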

\begin{proof}
  The non-free locus of  $\sigma$ on $\GChar{\mathbb{G}}{1}$ is the product of such loci on each factor $\GChar{H}{1}$ and $\GChar{\SL_{n_i}}{1}$. Therefore the codimension is the sum of these codimensions.  Note that these codimensions are all even.  So, in order to not have codimension four, $\sigma$ must have trivial projection to all factors but one, and there the non-free locus must have codimension exactly two.  This reduces the question to the case of a single factor.  Note that if $H$ is this factor, the non-free locus is empty, so cannot have codimension two. We have reduced to the statement that the non-free locus of the action of a nontrivial element $\sigma \in Z(\SL_n)^{2}$ on $\GChar{\SL_{n}}{1}$ has codimension two if and only if $n=2$. 

  To prove this, let $\sigma= (\zeta I_n,\eta I_n) \in \SL_n^2$, for $\zeta^n = \eta^n = 1$. Let $T < SL_n$ be the maximal torus of diagonal elements. 
  Via the isomorphism $\GChar{\SL_n}{1} \cong (T \times T)/S_n$ of Proposition \ref{prop:genusone}, we see that the action of $\sigma$ is by multiplication by $(\zeta,\eta)$.  Let us think of an element of $(T \times T)/S_n$ as a multiset of $n$ elements $(a_i,b_i) \in \Cs \times \Cs$, with $\prod_i a_i = \prod_i b_i = 1$. Then $(\zeta,\eta) \cdot (a_i,b_i) = (\zeta a_i, \eta b_i)$. 
  
  For $\sigma$ to fix the multiset, it must be partitioned into orbits in $\Cs \times \Cs$ under the action of $\sigma$. Each orbit is uniquely determined by any one of its elements, which can be arbitrary so long as the overall product of pairs is $(1,1)$. The latter condition can be interpreted to mean that all orbits except for one are arbitrarily chosen, with the final one having only finitely many possible choices. Therefore, the dimension of the fixed locus is $2(m-1)$, where $m$ is the number of orbits. Hence the codimension of the fixed locus is $2(n-m)$. For this to be two, we need $m=n-1$. However, every orbit is nontrivial, since $T \times T$ itself has no fixed points under $\sigma$. Therefore, this happens precisely if $n=2$ and $m=1$.
\end{proof}

\subsection{
  Proof of Theorem \ref{t:rmm}}


In general, automorphisms of a symplectic singularity do not lift to a symplectic resolution. However, this is the case in our situation when the automorphisms in question come from the (symplectic) action of $Z^{2g}$. More precisely, we have seen that there are three situations where the character variety associated to an  almost simple group of type $A$ admits a symplectic resolution: (a) $\GChar{\SL_2}{2}$, (b) $\GChar{\SL_n}{1}$, and (c) $\GChar{\PGL_2}{1}$. In (a), any automorphism of $\GChar{\SL_2}{2}$ lifts to the blowup of the singular locus, since the latter is always fixed by the automorphism. In case (b) of $\GChar{\SL_n}{1}$, every automorphism coming from $Z(\SL_n)^{2} \cong \Z_n^{2}$ lifts to an automorphism of the resolution, by the universal properties of the Hilbert--Chow morphism. This is because the action of $\Z_n^2$ is induced from an action on $\Cs \times \Cs$; see the proof of Proposition~\ref{p:codim-genus1}. In both cases, since the resolution is birational the lifts are necessarily unique. Finally, in case (c), the symplectic resolution is given by blowing up the $A_1$ surface singularities as in Proposition \ref{prop:genusone-pgl2}, but the group $Z^{2g}$ is trivial in this case.  

Let $Y$ be the variety occurring in one of (a)--(c), and $f \colon X \to Y$ the symplectic resolution described above. Let $\sigma$ be an automorphism coming from the action of a central (i.e., scalar) element of $\SL_n$ (or $\sigma = I_2$ when the group is $\PGL_2$). Then $\sigma$ lifts to $X$ with $f$ equivariant. 



\begin{lem}\label{l:fix-imm}
  Let $C \subseteq X^\sigma$ be a connected component.  At generic $x \in C$, the restriction $f|_C: C \to f(C)$ is an immersion.
\end{lem}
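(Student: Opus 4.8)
The statement asserts that the map $f|_C \colon C \to f(C)$ is an immersion at a generic point of the fixed-locus component $C \subseteq X^\sigma$. The natural approach is to reduce this to the corresponding statement about the smooth symplectic variety $X$ together with the symplectic action of $\sigma$, and then to a linear-algebra computation on tangent spaces. First I would recall that $X$ is smooth and symplectic, with $\sigma$ acting symplectically and $f$ being $\sigma$-equivariant, crepant, and birational onto $Y$; in particular $f$ is an isomorphism over the open dense locus $U \subseteq Y$ where $Y$ is smooth, and $X \setminus f^{-1}(U)$ is the (proper) exceptional locus. Since $C$ is irreducible (being a connected component of the smooth variety $X^\sigma$, which is smooth because $\sigma$ has finite order and we are in characteristic zero), its generic point either lies in $f^{-1}(U)$ or $C \subseteq X \setminus f^{-1}(U)$.

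The key point is to handle the second case, i.e., when $C$ lies entirely in the exceptional locus. Here the plan is to use that $f$ is semismall (in cases (a) and (c) this is explicit in Theorem~\ref{thm:Blowup22LS} and Proposition~\ref{prop:genusone-pgl2}; in case (b) the Hilbert--Chow morphism is semismall since $\Cs \times \Cs$ is a surface), so $\dim X \times_Y X = \dim X$, and more relevantly the fibers of $f$ over the deepest stratum are isotropic (in fact Lagrangian in their preimage, by Kaledin's results on symplectic resolutions, or directly from semismallness plus the symplectic form being the pullback of a generically nondegenerate form). So at a generic point $x \in C$, the tangent space $T_x(f^{-1}(f(x)))$ is isotropic in $T_x X$. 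Now $T_x X = (T_x X)^\sigma \oplus (T_x X)^{\sigma,\perp}$, where $(T_x X)^\sigma = T_x C$, and $\sigma$ acts symplectically so this is an orthogonal (in the symplectic sense, i.e., $\omega$-orthogonal) direct sum with each summand a symplectic subspace. The differential $df_x$ has kernel $T_x(f^{-1}(f(x)))$, and since $C \subseteq X^\sigma$, we have $T_x C \subseteq (T_x X)^\sigma$, so $\ker(df_x|_{T_x C}) = T_x C \cap \ker(df_x) = T_x C \cap T_x(f^{-1}(f(x)))$. The claim $f|_C$ is an immersion at $x$ is precisely that this intersection is zero.

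To show this intersection is zero at generic $x$, I would argue as follows. Suppose $v \in T_x C \cap \ker df_x$ is nonzero. Then $v$ is a $\sigma$-fixed tangent vector lying in the (isotropic) tangent space to the fiber. One approach: use that $C$, being a component of $X^\sigma$, is itself smooth symplectic (the restriction of $\omega$ to $(T_x X)^\sigma$ is nondegenerate since $\sigma$ acts symplectically); so $C$ has its own symplectic form $\omega|_C$, and $f(C) \subseteq Y^\sigma$. The differential $df_x|_{T_x C}$ fails to be injective exactly when the fiber of $f|_C$ through $x$ is positive-dimensional. But $f|_C$ is a projective morphism from the symplectic variety $C$; if it contracted a positive-dimensional subvariety through the \emph{generic} point of $C$, then $\dim f(C) < \dim C$ and $f|_C$ would be generically a fibration with positive-dimensional fibers. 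I would then derive a contradiction with the fact that $f$ is birational: the generic fiber of $f$ is a point, so $f^{-1}(f(x))$ is zero-dimensional for generic $x \in X$; but I need this for generic $x \in C$, which is not automatic since $C$ may lie in the exceptional locus. This is the main obstacle, and the resolution is to observe that $f(C)$ is a symplectic leaf closure (or a union of such) in $Y$ — indeed $f$ maps leaves to leaves and $Y^\sigma$ is a union of leaf-closures — and by semismallness $\dim f^{-1}(z) \le \tfrac12(\dim X - \dim f(C))$ for $z$ in the open leaf of $f(C)$, combined with the fact (again from the symplectic/Lagrangian structure of fibers, cf.\ Kaledin) that the generic fiber over $f(C)$ has dimension exactly $\tfrac12 \operatorname{codim} f(C)$ and meets $C$ in a point, since $C$ must map \emph{dominantly} (indeed generically finitely, by a dimension count using that $C$ is symplectic of dimension $\dim(T_xX)^\sigma$ and $f(C) \subseteq Y^\sigma$ has the same dimension as the symplectic locus $(T_xX)^\sigma$ dictates) onto a leaf closure in $Y^\sigma$ of the same dimension. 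Putting the dimension bounds together forces $\dim f(C) = \dim C$, hence $df_x|_{T_xC}$ is injective at generic $x$, which is the assertion. The main obstacle, then, is making precise the claim that $\dim f(C) = \dim C$ — i.e., that $\sigma$-fixed components are not contracted by $f$ — and I expect to handle this by combining semismallness of $f$ with the fact that $C$ inherits a symplectic form and $f|_C$ is generically finite onto its image because $f(C)$, lying in $Y^\sigma$, already has the maximal possible dimension $\dim C$.
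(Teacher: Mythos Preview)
Your setup is correct: $C$ is smooth symplectic, the fibers of $f$ are isotropic, and the statement reduces to showing $T_x C \cap \ker df_x = 0$ at a generic $x$. You also correctly identify the main obstacle: one must rule out that $f|_C$ contracts $C$ onto something of strictly smaller dimension. However, your proposed resolution of this obstacle is circular. You write that $f(C)$ ``already has the maximal possible dimension $\dim C$'' and that $f|_C$ is ``generically finite onto its image'', but these are exactly equivalent to what you are trying to prove. Semismallness bounds $\dim f^{-1}(y)$, but gives no control over the $\sigma$-fixed part of the fiber, which is what you need. Likewise, the inclusion $f(C)\subseteq Y^\sigma$ only gives an upper bound on $\dim f(C)$, not the lower bound you require (indeed, the Corollary the paper draws from this lemma is precisely $\dim X^\sigma \le \dim Y^\sigma$, so one certainly cannot use it here).

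The paper's argument supplies the missing idea. For generic $x\in C$ with $y=f(x)$, let $L$ be the symplectic leaf through $y$; by genericity a neighborhood of $x$ in $C$ maps into $L$, hence into $L^\sigma$. Now use Hamiltonian vector fields: for a $\sigma$-invariant function $h$ on $Y$, the field $X_{h\circ f}$ on $X$ is $\sigma$-invariant, hence tangent to $C$, and $df_x(X_{h\circ f}(x))=X_h(y)$. Averaging over the cyclic group generated by $\sigma$ shows such $X_h(y)$ span $T_y L^\sigma$, so these lifts span a subspace $W\subseteq T_x C$ with $df_x(W)=T_y L^\sigma$. For $v\in V:=T_xC\cap\ker df_x$ one has $\omega_X(X_{h\circ f},v)=d(h\circ f)(v)=0$, so $V\perp W$; and $V$ is isotropic since the fibers of $f$ are. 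But $df_x(T_xC)\subseteq T_yL^\sigma=df_x(W)$ forces $T_xC=W+V$, so $V$ lies in the radical of $\omega_C$. Since $\omega_C$ is nondegenerate, $V=0$. This is what is compressed into the paper's sentence ``Since $C\cap f^{-1}(L)$ is a symplectic manifold, as is its image---a component of $L^\sigma$---we conclude that its fibers are in fact zero-dimensional.''
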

Since $f(X^\sigma) \subseteq Y^\sigma$, we obtain the following immediate consequence:
\begin{cor}\label{c:fix-dim}
  We have the inequalities $\dim X^\sigma \leq \dim Y^\sigma$.
\end{cor}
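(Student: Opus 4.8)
The plan is to run a symplectic-geometry argument built on Kaledin's local structure theory for symplectic singularities, in the spirit of the quiver-variety analysis in \cite{BS-srqv}.

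First I would record the standing observation that since $\sigma$ has finite order and preserves the symplectic form $\omega_X$ on the smooth variety $X$, every connected component $C$ of $X^\sigma$ is a \emph{smooth symplectic submanifold} of $(X,\omega_X)$: the fixed locus of a finite group acting on a smooth variety is smooth, and at $x\in C$ the tangent space $T_xC=(T_xX)^\sigma$ is $\omega_X$-orthogonal to the sum of the nontrivial $\sigma$-eigenspaces, hence is a symplectic subspace; in particular $\dim C$ is even. It then suffices to fix a generic point $x\in C$ and show that $\mathrm{d}(f|_C)_x$ is injective. Write $y:=f(x)$, which is $\sigma$-fixed (as $f$ is $\sigma$-equivariant) and is the generic point of the irreducible closed subvariety $f(C)\subseteq Y^\sigma$.

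Next I would invoke the local structure of symplectic singularities. Let $L$ be the symplectic leaf of $Y$ through $y$. By Kaledin's theorem \cite{Kaledinsympsingularities}, combined with Artin approximation \cite{AnalyticArtin} (as already used for the $(n,g)=(2,2)$ case), there are analytic-local product decompositions $(Y,y)\cong (L,y)\times(S,0)$ and $(X,f^{-1}(y))\cong (L,y)\times(\widetilde S,f_S^{-1}(0))$ identifying $f$ with $\mathrm{id}_L\times f_S$, where $S$ is a conical symplectic singularity, $f_S\colon\widetilde S\to S$ a symplectic resolution, and $L$ corresponds to $L\times\{0\}$. I would then need the equivariant refinement: since $\sigma$ fixes $y$, preserves the symplectic forms, commutes with $f$, and stabilizes the (canonical) leaf $L$, the decomposition can be chosen $\langle\sigma\rangle$-equivariantly with $\sigma=\sigma_L\times\sigma_S$ acting diagonally and $\sigma_S$ fixing $0\in S$. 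Obtaining this equivariant version of Kaledin's splitting — by averaging the auxiliary choices over the finite cyclic group $\langle\sigma\rangle$ — is the step I expect to be the main obstacle; it is also precisely what lets us treat components $C$ lying over the singular locus of $Y$, where the naive comparison of symplectic forms is unavailable. Everything else is formal.

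Granting the equivariant slice, near $x$ one gets $C\cong L^{\sigma_L}\times C_S$ with $f|_C\cong\mathrm{id}_{L^{\sigma_L}}\times(f_S|_{C_S})$, where $C_S$ is the component of the smooth symplectic fixed locus $\widetilde S^{\sigma_S}$ through the relevant point of $f_S^{-1}(0)$. Since $y$ is the generic point of $f(C)$ and lies in $L\times\{0\}$, the subvariety $f(C)$ lies in $L\times\{0\}$ near $y$, i.e.\ $f_S(C_S)=\{0\}$, so $C_S$ is contained in the central fibre $f_S^{-1}(0)$. Because fibres of symplectic resolutions are isotropic \cite{Kaledinsympsingularities}, $\omega_{\widetilde S}$ restricts to zero on $C_S$; as $C_S$ is a symplectic submanifold, this forces $\dim C_S=0$. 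Therefore $C$ is locally $L^{\sigma_L}\times\{\mathrm{pt}\}$ and $f|_C$ is, near $x$, the inclusion $L^{\sigma_L}\hookrightarrow L\times\{0\}\subseteq Y$, which is an immersion (indeed a closed embedding). In particular $f|_C\colon C\to f(C)$ is an immersion at $x$, and $\dim f(C)=\dim C$; the degenerate case where $y$ is a smooth point of $Y$ (so $S$ is a point) simply recaptures that the crepant resolution $f$ is an isomorphism over the smooth locus. Corollary \ref{c:fix-dim} then follows at once, since $f(C)\subseteq Y^\sigma$ gives $\dim C=\dim f(C)\le\dim Y^\sigma$ for each component $C$.
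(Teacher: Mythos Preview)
Your argument is correct and lands on the same mechanism as the paper---the restriction $f|_C$ is generically an immersion because a symplectic submanifold cannot sit inside an isotropic fibre---but you route through heavier machinery than the paper does. The paper's proof of Lemma~\ref{l:fix-imm} never builds the equivariant Kaledin product decomposition you invoke. Instead it argues directly: by genericity of $x$, a neighbourhood of $x$ in $C$ maps into a single leaf $L$; the Hamiltonian vector fields on $Y$ lift to $X$ (since $f$ is Poisson), making $f|_{f^{-1}(L)}\colon f^{-1}(L)\to L$ a fibration; and the fibres of $f$ are isotropic by \cite[Proposition~1.2.2]{GinzburgQuiverVarieties} and \cite{Wierzba-contractions}. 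Then $C\cap f^{-1}(L)$ is symplectic, its image in $L^\sigma$ is symplectic, and the fibres of the map between them are isotropic, hence zero-dimensional. What this buys is exactly the elimination of the step you flag as the main obstacle: there is no need to produce a $\langle\sigma\rangle$-equivariant analytic splitting of $(Y,y)$ \emph{together with} a compatible product structure on the resolution side, because the Hamiltonian-flow argument already gives the fibration over $L$ without any choice of transversal slice. Your approach would work if the equivariant splitting is established, and is perhaps more conceptual, but the paper's route is shorter and self-contained.
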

\begin{proof}[Proof of Lemma \ref{l:fix-imm}]
 Let $y = f(x)$.  We induct on the codimension of the symplectic leaf through
   $y \in Y$. Over the open leaf (codimension zero), $f$ is an
   isomorphism, so the statement is clear. We proceed to the inductive
   step. Note that our base variety, $Y:=\GCharg{\SL_n}$ (for $g=1$ or
   $(n,g)=(2,2)$), has finitely many symplectic leaves. This is a
   general fact about symplectic varieties \cite[Theorem
   2.3]{Kaledinsympsingularities}, but in our case can also be
   seen explicitly (see the end of Section \ref{ss:ss}). Thus,
   $Y^{\sigma}$ intersects only finitely many symplectic leaves.
   Now suppose inductively that $y \in Y^{\sigma}$ lies in a
   symplectic leaf $L$ of codimension $2k > 0$.  Since
   $X^{\sigma}$ is smooth symplectic, its codimension
   is locally constant. By genericity of $x$, we can assume that nearby points
   in $C$ do not map to a symplectic leaf in $Y$ of smaller codimension. Hence, nearby points also map to $L$.  That is, a neighborhood of $x$ in $C$ maps entirely to $L$.  Now, $f|_{f^{-1}(L)}: f^{-1}(L) \to L$ is a fibration, thanks to the infinitesimal automorphisms of $f: X \to Y$ given by Hamiltonian vector fields.  Moreover, the fibers of $f$ are all isotropic
   (this is actually true for an arbitrary projective
   symplectic resolution thanks to \cite[Proposition
   1.2.2]{GinzburgQuiverVarieties}, based on
   \cite{Wierzba-contractions}). Since $C \cap f^{-1}(L)$ is a symplectic manifold, as is its image---a component of $L^\sigma$---we conclude that its fibers are in fact zero-dimensional. Thus, $f$ is an immersion at $x$, as desired.
 \end{proof}
 \begin{remark} The proof shows that
   $f$ is an immersion everywhere on $C \cap f^{-1}(L)$, so one can replace ``generic $x$'' in the statement of the lemma with ``for $x \in C$ whose image in $Y$ has maximal rank under the Poisson structure''.
\end{remark}

We can construct a $\Q$-factorial terminalization $\widetilde{\GCharg{K_0}}$, resp. $\widetilde{\GChar{K_1}{1}}$, of $\GChar{K_0}{g}$, resp. of $\GChar{K_1}{1}$, by taking the symplectic resolutions described above for the factors of type (a)--(c), and leaving other factors fixed. We have shown that the action of $C_j^{2g}$ on $\GChar{K_j}{g}$ lifts (uniquely) to the $\Q$-factorial terminalization $\widetilde{\GCharg{K_j}}$. 

We must check that $\widetilde{\GCharg{K_j}}/C_j^{2g}$ is a $\Q$-factorial terminalization of $ \GChar{K_j}{g}/C_j^{2g}= \GChar{K}{g}$. By Corollary~\ref{c:fix-dim}, the codimension of the singular locus of $\widetilde{\GCharg{K_j}}/C_j^{2g}$ is the minimum of the codimension of the non-free locus in $\GCharg{K_j}$ and the codimension of the singular locus of the partial resolution $\widetilde{\GCharg{K_j}}$ (which is at least four).  It follows from our codimension estimates, Proposition~\ref{prop:fixedsmoothcodim4} and Proposition~\ref{p:codim-genus1}, that this codimension is at least four (the final situation described in Proposition~\ref{p:codim-genus1} does not occur precisely because of the definition of $K_1$). Hence the partial resolution $\widetilde{\GCharg{K_j}}/C_j^{2g}$ has terminal singularities by \cite{NamikawaNote}.  By construction and Theorem \ref{thm:slsqfactor}, it is a finite quotient of a $\Q$-factorial variety, hence $\Q$-factorial; see \cite[Section
2.4]{BellSchedler2large} and the references therein. 


The final statement is clear, since a quotient by a free action of a finite group preserves the properties of being $\Q$-factorial and of being terminal. This completes the proof of Theorem~\ref{t:rmm}.

\subsection{Proof of Theorem~\ref{t:class-res}}
This proof is based on showing that the character variety is formally locally conical. (Note that, thanks to \cite{Artin}, we could replace ``formal'' by ``\'etale'' if desired.)

First, the fact that (a) and (b) of Theorem~\ref{t:class-res} are equivalent follows immediately from the final statement of Theorem~\ref{t:rmm}. Moreover, if we are in one of the cases in (c) then it is a consequence of Theorem~\ref{t:constr-res} that $\GCharg{\G}$ admits a (projective) symplectic resolution. Therefore, it remains to show that if $(K,g)$ is not listed in (c) then $\GCharg{K}$ does not admit a projective symplectic resolution. 

\begin{defn}
  Let a \emph{symplectic cone} mean a conical symplectic singularity, where the symplectic structure has positive weight under dilations (where the functions on the cone, by definition, have nonnegative weight). We say that a variety is \emph{formally locally symplectically conical} at a point if the formal neighborhood of a point is isomorphic to a formal neighborhood of a symplectic cone.
\end{defn}

It has been conjectured by Kaledin, \cite[Conjecture 1.8]{KaledinSurvey}, that every symplectic singularity is formally locally symplectically conical. Therefore, Proposition~\ref{p:formal-neighborhood} below can be interpreted as saying that Kaledin's conjecture holds for all character varieties of type $A$ and their relative minimal models.  

Recall that $\mathbb{G} = H \times \prod_i SL_{n_i}$ and $Z_0 \subset \mathbb{G}$ a finite central subgroup such that $G = \mathbb{G}/Z_0$. Let $\rho \in \Hom(\pi, \mathbb{G})$ be a semi-simple representation. The stabilizer $\mathbb{G}_{\rho}$ is reductive.  

\begin{lem}\label{lem:formalGGnbhd}
	There exists a homogeneous formal cone $\widehat{Y}_0$ such that:
	\begin{enumerate}
		\item[(i)] $\mathbb{G}_{\rho}$ acts on $\widehat{Y}_0$ preserving the dilation filtration and $\widehat{Y}_0 \git \, \mathbb{G}_{\rho}$ is a formal symplectic cone. 
		\item[(ii)] There is a $\mathbb{G}$-equivariant isomorphism 
		\begin{equation}\label{eq:formalGGnbhd}
					\widehat{\Hom}(\pi,\mathbb{G})_{\mathbb{G} \cdot \rho} \cong \widehat{Y}_{0} \times_{\mathbb{G}_{\rho}} \mathbb{G}
		\end{equation}
			of homogeneous cones inducing an isomorphism of symplectic cones
			$$
			\widehat{\GCharg{\mathbb{G}}}_{[\rho]} \cong \widehat{\Hom}(\pi,\mathbb{G})_{\mathbb{G} \cdot \rho} \git \, \mathbb{G} \cong \left( \widehat{Y}_{0} \times_{\mathbb{G}_{\rho}} \mathbb{G} \right) \git \, \mathbb{G} \cong  \widehat{Y}_0 \git \, \mathbb{G}_{\rho}. 
			$$
			   
\end{enumerate}
\end{lem}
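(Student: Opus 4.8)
The plan is to realize $\Hom(\pi,\mathbb{G})$ near the closed orbit $\mathbb{G}\cdot\rho$ using Luna's slice theorem together with the fact (recalled in Section~\ref{ss:22}, via \cite{BGV-NHsCY2} and \cite{Kaplan-Schedler-mpa}) that the group algebra $\C[\pi]$ is a $2$-Calabi--Yau algebra. First I would invoke the description of the formal (or, using \cite{Artin}, \'etale) neighborhood of a semisimple representation $\rho$ of a $2$-CY algebra: it is controlled by the $A_\infty$-structure on $\Ext^\bullet_{\C[\pi]}(\rho,\rho)$, and by \cite[Theorem 6.3, Theorem 6.6]{BGV-NHsCY2} together with \cite[Corollary 5.21]{Kaplan-Schedler-mpa} the formal neighborhood of $\mathbb{G}\cdot\rho$ in $\Hom(\pi,\mathbb{G})$ is $\mathbb{G}$-equivariantly isomorphic to the associated-graded model $\widehat{\mu^{-1}(0)}\times_{\mathbb{G}_\rho}\mathbb{G}$, where $\mu$ is the moment map for the $\mathbb{G}_\rho$-action on the symplectic vector space $V:=\Ext^1$ attached to the ext-quiver of $\rho$ (this is exactly the mechanism already used in the proof of Theorem~\ref{thm:Blowup22LS}). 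Set $\widehat Y_0 := \widehat{\mu^{-1}(0)}$, the formal completion at $0$ of the affine cone $\mu^{-1}(0)\subseteq V$.

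The key steps, in order: (1) identify $\widehat Y_0$ as a homogeneous formal cone --- it carries the dilation action inherited from the linear scaling on $V$, under which $\mu$ is homogeneous (quadratic), so $\mu^{-1}(0)$ is a cone and the symplectic form $\omega$ on $V$ has weight $2$, hence positive; (2) observe that $\mathbb{G}_\rho$ acts linearly on $V$ preserving $\omega$ and commuting with dilation, so it preserves the dilation filtration on $\widehat Y_0$, and that $\mathbb{G}_\rho$ is reductive since $\rho$ is semisimple (it is the centralizer of a semisimple representation, a product of general linear groups intersected with $\mathbb{G}$); (3) form the Hamiltonian reduction $\widehat Y_0\git\,\mathbb{G}_\rho$ and note this is precisely the formal neighborhood of $0$ in the affine quiver variety $\mu^{-1}(0)\git\,\mathbb{G}_\rho$ attached to the ext-quiver --- by \cite[Theorem 5.1]{BS-srqv} (cf.\ the quiver variety computations in the proof of Theorem~\ref{thm:Blowup22LS}) this quiver variety is a conical symplectic singularity, and the symplectic form has positive weight because $\omega$ does and the reduction procedure is weight-preserving; (4) take $\mathbb{G}$-invariants in the isomorphism $\widehat\Hom(\pi,\mathbb{G})_{\mathbb{G}\cdot\rho}\cong\widehat Y_0\times_{\mathbb{G}_\rho}\mathbb{G}$ to get $\widehat{\GCharg{\mathbb{G}}}_{[\rho]}\cong\widehat Y_0\git\,\mathbb{G}_\rho$, using that $(\widehat Y_0\times_{\mathbb{G}_\rho}\mathbb{G})\git\,\mathbb{G}\cong\widehat Y_0\git\,\mathbb{G}_\rho$ by the standard compatibility of GIT quotients with induction along $\mathbb{G}_\rho<\mathbb{G}$; (5) finally check that this last isomorphism is an isomorphism of symplectic cones, i.e.\ it matches the Goldman Poisson structure on the left with the reduced symplectic structure on the right --- this follows because the formal isomorphism of \cite{BGV-NHsCY2} is Poisson (the $2$-CY structure is precisely what produces the shifted-symplectic / Poisson structure on $\Hom(\pi,\mathbb{G})$, matching Goldman's) and all maps involved are dilation-equivariant.

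The main obstacle I expect is step~(5), reconciling the \emph{symplectic/Poisson} compatibility with the \emph{conical} structure simultaneously: one needs that the Darboux-type formal isomorphism furnished by the $A_\infty$-minimal-model machinery can be chosen to intertwine the dilation actions on both sides (so that $\widehat Y_0$ is genuinely a \emph{homogeneous} cone with the symplectic form of positive weight), not merely to be a Poisson isomorphism of formal schemes. This is where one must be careful that the grading on $\Ext^\bullet$ coming from the cone structure on $\Hom(\pi,\mathbb{G})$ --- ultimately the dilation on $\GCharg{\mathbb{G}}$ induced e.g.\ by scaling the loop generators $a_i^*$ in the multiplicative-preprojective presentation --- is compatible with the weight-$2$ grading on the symplectic form on $V$; invoking the equivariant version of Artin approximation \cite[Corollary~1.6]{AnalyticArtin} (as in the proof of Theorem~\ref{thm:Blowup22LS}) with the torus acting by dilations handles the passage from formal to the stated equivariant-conical isomorphism. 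The remaining steps (1)--(4) are routine once the $2$-CY input and the quiver-variety identification of \cite[Theorem 5.1]{BS-srqv} are granted.
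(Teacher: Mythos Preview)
Your overall strategy is the right one, but there is a genuine gap in how you invoke the $2$-Calabi--Yau machinery. The results \cite[Theorem~6.3, Theorem~6.6]{BGV-NHsCY2} and \cite[Corollary~5.21]{Kaplan-Schedler-mpa} describe formal neighborhoods in the \emph{representation variety} $\Rep(\C[\pi],n)=\Hom(\pi,\GL_n)$ of the $2$-CY algebra $\C[\pi]$; they do not apply directly to $\Hom(\pi,\mathbb{G})$ for $\mathbb{G}=H\times\prod_i\SL_{n_i}$, which is not a moduli space of modules over $\C[\pi]$ but rather a closed subvariety of $H^{2g}\times\prod_i\Hom(\pi,\GL_{n_i})$ cut out by determinant conditions. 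Correspondingly, your $\widehat Y_0$ built from the ext-quiver of $\rho$ as a $\C[\pi]$-module (with its natural $\prod\GL(W_i)$-action and moment map) is the \emph{$\GL_n$} slice, not the $\mathbb{G}$ slice; the stabilizer $\mathbb{G}_\rho$ is strictly smaller than $(\GL_n)_\rho$, and the corresponding moment-map fiber is different. The paper handles exactly this point: it first establishes the lemma for $\GL_n$ (where the $2$-CY input applies directly, via \cite[Theorem~5.16]{Kaplan-Schedler-mpa}), then uses the fiber-bundle decomposition of diagram~\eqref{eq:trivialbundle1} to split off the torus direction and deduce the statement for $\SL_n$, and finally takes products over the simple factors of $\mathbb{G}$. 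You need this reduction step, or an equivalent argument, before your steps (1)--(4) can proceed.

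A secondary issue is your discussion of the ``main obstacle''. There is no global dilation on $\Hom(\pi,\mathbb{G})$ or $\GCharg{\mathbb{G}}$: scaling $a_i^*$ does not preserve the relation $\prod_i(1+a_ia_i^*)(1+a_i^*a_i)^{-1}=1$, so no such $\C^\times$-action exists. The cone structure in the lemma is purely formal-local and comes \emph{from} the isomorphism with the quiver model $\mu^{-1}(0)$, which is a homogeneous cone (cut out by quadratic equations in a linear space). Because the cone is homogeneous, its dilation filtration coincides with the $\mathfrak{m}$-adic filtration, which any action fixing the orbit automatically preserves; so the compatibility you worry about in step~(5) is automatic once the formal isomorphism is established, and no equivariant Artin approximation with an auxiliary torus is needed.
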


\begin{proof}
	We begin by noting, as in the proof of Theorem~\ref{thm:Blowup22LS}, that the character variety $\GCharg{\GL_n}$ is
	the moduli space of representations of the group algebra $\C[\pi]$
	of the fundamental group $\pi$ of a genus $g$ surface.  Therefore,
	by \cite[Theorem 6.3, Theorem 6.6]{BGV-NHsCY2} and \cite[Corollary~5.21]{Kaplan-Schedler-mpa}, the formal neighborhood
	of $[\rho] \in \GCharg{\GL_n}$ is isomorphic as Poisson varieties to the formal neighborhood of
	$0$ in a Nakajima quiver variety $\Nak{0}{\alpha}{0}$, since the group algebra $\C[\pi]$ is
	a two-dimensional Calabi--Yau algebra. Moreover, the statement of the lemma, but with $\mathbb{G} = GL_n$, is implied by the result \cite[Theorem 5.16]{Kaplan-Schedler-mpa} applied to our particular situation. Diagram~\ref{eq:trivialbundle1} implies that 
	$$
	\widehat{\Hom}(\pi,GL_n)_{GL_n \cdot \rho} = \widehat{\Hom}(\pi,SL_n)_{GL_n \cdot \rho_0} \widehat{\times} \, \widehat{(\C^{2g})}_x, 
	$$
	where $(\rho_0,x) \in \Hom(\pi,SL_n) \times T$ is a point such that $\Z_{n}^{2g} \cdot (\rho_0,x) = \rho$. Restricting to the formal subscheme $ \widehat{\Hom}(\pi,SL_n)_{GL_n \cdot \rho_0} \times \{ x \}$ in $\widehat{\Hom}(\pi,GL_n)_{GL_n \cdot \rho}$ shows that the statement of the lemma also holds for $\mathbb{G} = SL_n$. 
	
	Finally, for $\mathbb{G} = H \times \prod_i SL_{n_i}$, the character variety $\GCharg{\mathbb{G}}$ equals $H^{2g} \times \prod_i \GCharg{SL_{n_i}}$ and hence the statement of lemma follows by taking the product over all $SL_{n_i}$.

	


\end{proof}

\begin{prop}\label{p:formal-neighborhood}
  Both $\GCharg{\G}$ and its minimal model $\widetilde{\GCharg{\G}}$
  are formally locally symplectically conical at every point.
\end{prop}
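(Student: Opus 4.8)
The plan is to reduce the statement for $\GCharg{\G}$ directly to Lemma \ref{lem:formalGGnbhd}, and then to handle the minimal model by transporting the conical structure through the quotient constructions of Theorem \ref{t:rmm}. First I would treat $\GCharg{\G}$ itself. By Corollary \ref{cor:sympZfinitequot} we have $\GCharg{\G} \cong \GCharg{\mathbb{G}}/Z_0^{2g}$ with $Z_0^{2g}$ acting symplectically. Fix a point of $\GCharg{\G}$ and a semi-simple lift $\rho \in \Hom(\pi,\mathbb{G})$; the formal neighborhood of the image point in $\GCharg{\G}$ is the quotient by the (finite) stabilizer in $Z_0^{2g}$ of the formal neighborhood of $[\rho]$ in $\GCharg{\mathbb{G}}$. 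By Lemma \ref{lem:formalGGnbhd} the latter is isomorphic, as a symplectic cone, to $\widehat{Y}_0 \git \, \mathbb{G}_\rho$, which is a formal symplectic cone (positive weight symplectic form, functions of nonnegative weight). Since the residual $Z_0^{2g}$-action preserves the Poisson structure and can be taken to preserve the dilation filtration (it acts through the reductive group $\mathbb{G}$, whose action on $\widehat{Y}_0 \times_{\mathbb{G}_\rho}\mathbb{G}$ respects the grading), the quotient is again a formal symplectic cone. This proves the claim for $\GCharg{\G}$, and in particular (taking $Z_0$ trivial) for $\GCharg{\mathbb{G}}$ and each factor $\GCharg{\SL_{n_i}}$, $\GChar{\PGL_2}{1}$, etc.

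For the minimal model $\widetilde{\GCharg{\G}}$, I would use the explicit description in Theorem \ref{t:rmm}: it is obtained as $(\GCharg{H} \times \widetilde{\GCharg{K}})/Z^{2g}$, where $\widetilde{\GCharg{K}}$ is in turn a finite quotient of $\widetilde{\GCharg{K_0}}$ (or $\widetilde{\GChar{K_1}{1}}$ in genus one), itself a product of symplectic resolutions of the factors of type (a)--(c) in the notation of Section \ref{ss:min} together with $\Q$-factorial-terminal factors $\GCharg{\SL_{n_i}}$. Since formal local symplectic coniformity is stable under products, under passing to symplectic (or even $\Q$-factorial terminal) factors, and under symplectic finite quotients that respect a compatible dilation action, it suffices to check the property for each building block. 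The $\Q$-factorial-terminal character-variety factors are already covered by the previous paragraph. For the resolution factors, one uses that each of the three symplectic resolutions in Theorem \ref{t:constr-res} is constructed locally from a conical model: $\Hilb^n$ of $\Cs\times\Cs$ (equivalently of $\C^2$ locally, with its weight-$1$ conical symplectic form) and the Lehn--Sorger/Kaledin--Lehn blowup of the closure of the nilpotent orbit $\mathcal{N}\subset\mf{sp}(4)$, both of which carry natural $\Cs$-actions contracting to the relevant fiber of the resolution; the $A_1$-blowup in the $\PGL_2$ case is the standard conical minimal resolution of $\C^2/\Z_2$. Pulling back the dilation action of the base (Lemma \ref{lem:formalGGnbhd} produces a homogeneous formal cone downstairs) along the projective crepant morphism, and invoking that a projective crepant morphism from a variety with at worst symplectic singularities is compatible with a lift of the contracting $\Cs$-action on a formal neighborhood of a fiber, gives the formal local conical structure upstairs.

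The main obstacle I anticipate is the last point: rigorously producing a $\Cs$-action on the formal neighborhood of a fiber of $\widetilde{\GCharg{K}} \to \GCharg{K}$ lifting the contracting action on $\widehat{Y}_0 \git \, \mathbb{G}_\rho$, and verifying that the symplectic form still has positive weight on the resolution. This should follow because the Poisson (hence symplectic-form) grading is intrinsic and the crepant birational morphism is unique enough that any $\Cs$-action on the base lifts canonically to any $\Q$-factorial terminalization (both sides being relative minimal models), but making this precise requires care about formal versus algebraic $\Cs$-actions and about the fact that the fiber over the fixed point is proper. One clean way around it is to apply Lemma \ref{lem:formalGGnbhd} and \cite[Theorem 5.16]{Kaplan-Schedler-mpa} at the level of the relevant Nakajima quiver varieties $\Nak{0}{\alpha}{0}$ — whose $\Q$-factorial terminalizations are known to be conical symplectic — and then transport via the formal isomorphism, rather than arguing directly on the character variety. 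I would structure the write-up so that the quiver-variety case does the real work and the character-variety statement is a formal consequence of Lemma \ref{lem:formalGGnbhd}, the product structure, and stability of the conical property under symplectic finite quotients.
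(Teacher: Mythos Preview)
Your overall architecture matches the paper's: reduce $\GCharg{\G}$ to Lemma~\ref{lem:formalGGnbhd} plus a finite symplectic quotient, then handle $\widetilde{\GCharg{\G}}$ via the description in Theorem~\ref{t:rmm}. But there are two points to flag, one a genuine gap and one a simplification you are missing.

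\textbf{The gap.} Your parenthetical ``it acts through the reductive group $\mathbb{G}$, whose action on $\widehat{Y}_0 \times_{\mathbb{G}_\rho}\mathbb{G}$ respects the grading'' is not correct. The $Z_0^{2g}$-action on $\Hom(\pi,\mathbb{G})$ is by translation (multiplying generators by central elements), not by $\mathbb{G}$-conjugation; it commutes with the conjugation action but is not induced by it. So there is no obvious reason the stabilizer $S$ of $[\rho]$ should preserve the dilation filtration on $\widehat{Y}_0 \git\, \mathbb{G}_\rho$, and the paper explicitly says this is \emph{not} a priori clear. The paper's fix is to work one level up: the combined action of $\mathbb{G} \times S$ on $\widehat{\Hom}(\pi,\mathbb{G})_{\mathbb{G}\cdot\rho}$ automatically preserves the \emph{adic} filtration (since $\mathbb{G}\cdot\rho$ is stable under both); transport this to $\widehat{Y}_0 \times_{\mathbb{G}_\rho}\mathbb{G}$, restrict to the slice $\widehat{Y}_0$, and use that $\widehat{Y}_0$ is \emph{homogeneous} so the adic and dilation filtrations coincide there. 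Only then is one in the situation of Theorem~\ref{t:conical-quotients}(ii), and the conical quotient follows. This is the one nontrivial step in the whole proof, and your proposal skips it.

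\textbf{The minimal model.} You are making this harder than necessary. You worry about lifting contracting $\Cs$-actions through projective crepant morphisms and propose going via quiver varieties. The paper avoids all of this: by Theorem~\ref{t:rmm}, $\widetilde{\GCharg{K}}$ is a finite symplectic quotient of $\widetilde{\GCharg{K_1}}$ (for $g=1$) or $\widetilde{\GCharg{K_0}}$ (for $g\geq 2$), and these covers are products whose factors are either \emph{smooth} symplectic varieties or, in the $g\geq 2$ case, unresolved factors $\GCharg{\SL_n}$. Smooth varieties are trivially formally locally (homogeneous) symplectic cones at every point (the cone is the tangent space), so Theorem~\ref{t:conical-quotients}(b) applies immediately; the $\GCharg{\SL_n}$ factors are covered by the first half of the argument. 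No $\Cs$-lifting to resolutions is needed, because on the cover the resolved factors are already smooth. Your detour through Nakajima varieties and the lifting problem is unnecessary once you use this observation.
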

\begin{proof}
	Let $x \in \GCharg{G} = \GCharg{\mathbb{G}}/Z_0^{2g}$. We wish to describe the formal neighborhood $\widehat{\GCharg{G}}_x$ of $x$ in $\GCharg{G}$. We would like to apply Theorem~\ref{t:conical-quotients} for the action of $Z_0^{2g}$ on $\GCharg{\mathbb{G}}$, but we can't immediately do so since it isn't \emph{a priori} clear that the stabilizer at the point $[\rho]$ preserves the filtration generated by dilations.
	
	Choose $\rho \in \Hom(\pi,\mathbb{G})$, as in Lemma~\ref{lem:formalGGnbhd}, with $Z_0^{2g} \cdot [\rho] = x$. Let $S = \mathrm{Stab}_{Z_0^{2g}} [\rho]$ denote the stabilizer of the point $[\rho]$. Since $Z_0^{2g}$ is a finite group, whose action on $\Hom(\pi,\mathbb{G})$ commutes with the conjugation action of $\mathbb{G}$, we have  
	$$
	\widehat{\GCharg{G}}_x \cong \left(\widehat{\GCharg{\mathbb{G}}}_{[\rho]} \right) \git \, S \cong \left(\widehat{\Hom}(\pi, \mathbb{G})_{\mathbb{G} \cdot \rho} \git \, \mathbb{G} \right) \git \, S \cong \widehat{\Hom}(\pi, \mathbb{G})_{\mathbb{G} \cdot \rho} \git \, (\mathbb{G} \times S),
	$$
	where the second identification is Lemma~\ref{lem:formalGGnbhd}(ii). 
	
	The group $\mathbb{G} \times S$ automatically preserves the adic filtration on $\widehat{\Hom}(\pi,\mathbb{G})_{\mathbb{G} \cdot \rho}$ since ${\mathbb{G} \cdot \rho}$ is stable under $\mathbb{G} \times S$. Therefore, under the isomorphism \eqref{eq:formalGGnbhd}, the group $\mathbb{G} \times S$ acts on $\widehat{Y}_{0} \times_{\mathbb{G}_{\rho}} \mathbb{G}$ preserving the adic filtration. Identify $\widehat{Y}_{0}$ with a closed subscheme of $\widehat{Y}_{0} \times_{\mathbb{G}_{\rho}} \mathbb{G}$ in the obvious way and let $S_0$ denote the stabilizer of $(0,1)$ in $\widehat{Y}_{0} \times_{\mathbb{G}_{\rho}} \mathbb{G}$. Under the identification, $S_0$ acts on $\widehat{Y}_{0}$ preserving the adic filtration. Since the cone $\widehat{Y}_{0}$ is homogeneous, the adic filtration equals the dilation filtration. Thus, $S_0$ acts on $\widehat{Y}_{0}$ preserving the dilation filtration. Theorem~\ref{t:conical-quotients}(a) implies that this action is conjugate to a homogeneous action. Moreover, as noted in Remark~\ref{rem:formalconjPoisson}, we may assume it also preserves the Poisson structure. Therefore, as in Theorem~\ref{t:conical-quotients}(b), we deduce that $\widehat{Y}_0 \git \, S_0$ is a symplectic cone.
	
	Finally, since 
	$$
	\widehat{\Hom}(\pi, \mathbb{G})_{\mathbb{G} \cdot \rho} \git \, (\mathbb{G} \times S) \cong \left( \widehat{Y}_{0} \times_{\mathbb{G}_{\rho}} \mathbb{G} \right) \git \, (\mathbb{G} \times S) \cong \widehat{Y}_0 \git \, S_0,
	$$
	we deduce that $\widehat{\GCharg{G}}_x$ is a symplectic cone.


 
 Next we consider $\widetilde{\GCharg{\G}}$. Note first that an \'etale quotient induces isomorphisms on formal local rings, so we may assume by the final statement of Theorem~\ref{t:rmm} that $G = H \times K$. In fact, we may assume $G=K$ is semisimple. For clarity, we will treat the cases $g = 1$ and $g > 1$ separately. In the case $g = 1$, we write $K = K_1 / C_1$ as in Theorem~\ref{t:rmm}(i). Then $\widetilde{\GChar{K}{1}} = \widetilde{\GChar{K_1}{1}}/C_1^2$ with $\widetilde{\GChar{K_1}{1}}$ a smooth symplectic variety. It follows from Theorem~\ref{t:conical-quotients}(b) that $\widetilde{\GChar{K}{1}}$ is formally locally symplectically conical at every point; see Remark~\ref{rem:formalconjPoisson} for the symplectic part. 
 
 Similarly, if $g \ge 2$ then we write $K = K_0/C_0$ as in Theorem~\ref{t:rmm}(ii). Then $\widetilde{\GCharg{K}} = \widetilde{\GCharg{K_0}}/C_1^{2g}$ and $\widetilde{\GCharg{K_0}}$ is a product of varieties (one for each simple factor of $K_0$), where each factor is either a smooth symplectic variety, or has the form $\GCharg{SL_n}$. It follows from the first part of the proof of the proposition that every point in $\widetilde{\GCharg{K}}$ is formally locally symplectically conical; again, see Remark~\ref{rem:formalconjPoisson} for the symplectic part.

\end{proof}
\begin{lem}\label{lem:nonformalresolveYGg}
  \begin{enumerate}
  \item[(i)] Let $g=1$ and suppose that the minimal model of Theorem \ref{t:rmm} is not smooth.  Then the local cone at every point of that model is $\Q$-factorial and terminal.
  \item[(ii)] Let $g\geq 2$ and suppose that $x \in \widetilde{\GCharg{G}}$ is a point having a preimage
    in  $\widetilde{\GCharg{\mathbb{G}}}$ whose projection to each factor $\widetilde{\GCharg{\SL_n}}$ is either a smooth point or, for $(g,n) \neq 2$,
    a point in the minimal stratum $C_{(1,n)}$ of $\GCharg{\SL_n}$.  Then the local cone at $x$ is $\Q$-factorial and terminal.
  \end{enumerate}
\end{lem}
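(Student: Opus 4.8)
The plan is to identify the local cone at $x$, in both cases, with a linear symplectic quotient $\C^{2d}/S_0$ for a finite subgroup $S_0<\Sp(2d,\C)$, and then read off the two conclusions from codimension estimates already in hand. In case~(i) the model in question is $\widetilde{\GChar{K}{1}}=\widetilde{\GChar{K_1}{1}}/C_1^2$, and by Theorem~\ref{t:constr-res}(i) the variety $\widetilde{\GChar{K_1}{1}}=\prod_{i=1}^{m-m_1}\Hilb^{n_i}_0(\Cs\times\Cs)\times\bigl(\widetilde{\Hilb^2_0(\Cs\times\Cs)/\Z_2^2}\bigr)^{m_1}$ is a \emph{smooth} symplectic variety. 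In case~(ii), $\widetilde{\GCharg{\G}}$ is \'etale-locally a finite quotient of $\widetilde{\GCharg{\mathbb{G}}}=\GChar{H}{g}\times\prod_i\widetilde{\GCharg{\SL_{n_i}}}$ by $Z_0^{2g}$, and by the hypothesis on $x$ — using Lemma~\ref{lem:singlocusGoldman} to identify $C_{(1,n)}$ with the smooth locus of $\GCharg{\SL_n}$, and that $\widetilde{\GChar{\SL_2}{2}}$ is everywhere smooth — the chosen preimage is a smooth point of $\widetilde{\GCharg{\mathbb{G}}}$. So in either case we have an ambient variety $M$ which is smooth at a preimage $y$ of the given point, with finite stabiliser $S_0\subseteq C_1^2$ (resp.\ $S_0\subseteq Z_0^{2g}$), and the formal neighbourhood of the point in the minimal model is $\widehat{M}_y\git S_0\cong\widehat{\C^{2d}}_0\git S_0$; applying the conical-linearisation input behind Proposition~\ref{p:formal-neighborhood} (Theorem~\ref{t:conical-quotients}, and Remark~\ref{rem:formalconjPoisson} for the symplectic part) we may take the $S_0$-action on $\C^{2d}$ to be linear and symplectic, so the local cone is exactly $\C^{2d}/S_0$.

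Granting this, $\Q$-factoriality is automatic, since a finite quotient of affine space is $\Q$-factorial (see \cite[Section~2.4]{BellSchedler2large} and the references therein). For terminality, $\C^{2d}/S_0$ is a symplectic singularity by \cite[Proposition~2.4]{Beauville}, so by \cite{NamikawaNote} it is enough to show that its singular locus has codimension at least four. No element of $\Sp(2d,\C)$ is a pseudo-reflection, since the fixed subspace of a symplectic transformation is symplectic, hence of even codimension; therefore the singular locus of $\C^{2d}/S_0$ is the image of $\bigcup_{1\neq\sigma\in S_0}(\C^{2d})^\sigma$. Moreover $(\C^{2d})^\sigma$ is the tangent space at $y$ to the $\sigma$-fixed locus of $M$ (which is smooth at $y$), so its codimension equals the codimension of $M^\sigma$ at $y$. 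We are thus reduced to proving: for every nontrivial $\sigma\in C_1^2$ (resp.\ $\sigma\in Z_0^{2g}$, where we may assume $\sigma$ has trivial projection to the torus $H$, as otherwise it acts on $\GChar{H}{g}$ by a fixed-point-free translation), the $\sigma$-fixed locus of $M$ has codimension at least four.

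For case~(i): by Corollary~\ref{c:fix-dim}, applied to each factor (the $\PGL_2^{m_1}$-factors being fixed pointwise, as $Z(\PGL_2)$ is trivial), the codimension of the $\sigma$-fixed locus in $\widetilde{\GChar{K_1}{1}}$ is at least its codimension in $\GChar{K_1}{1}$, which equals the sum over the $\SL_{n_i}$-factors of the codimensions treated in the proof of Proposition~\ref{p:codim-genus1}; each of these is at least four unless the $i$-th component of $\sigma$ is minus the identity in an $\SL_2$-factor and the identity elsewhere, and by the maximality of $m_1$ in the definition of $K_1$ (cf.\ the proof of Theorem~\ref{t:rmm}) no such element lies in $C_1$, so that exceptional stratum does not arise. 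For case~(ii): Corollary~\ref{c:fix-dim} on the $(n_i,g)=(2,2)$-factors, together with Proposition~\ref{prop:fixedsmoothcodim4} on the other $\SL_{n_i}$-factors (whose chosen points lie in $C_{(1,n_i)}=\GCharg{\SL_{n_i}}_{\mathrm{irr}}$, where that proposition gives codimension $\geq 4(g-1)\geq 4$), reduces matters to the single assertion that, for $g=2$ and $\sigma$ a nontrivial element of $Z(\SL_2)^4$, the $\sigma$-fixed locus of $\GChar{\SL_2}{2}$ has codimension at least four. This is checked stratum by stratum: on the dense stratum $C_{(1,2)}$ it is Proposition~\ref{prop:fixedsmoothcodim4}; on $C_{(1,1;1,1)}$, where a representation is a sum $F_1\oplus F_2$ of distinct characters with $F_1F_2=1$, being $\sigma$-fixed forces $F_2=\sigma\cdot F_1$, hence $F_1^2=\sigma$, so this part of the fixed locus is finite; and on $C_{(2,1)}$ it is empty. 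Summing the codimensions over all factors then yields the claim.

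The genuinely delicate step is the first one, the identification of the local cone with $\C^{2d}/S_0$; everything after it is bookkeeping with the already-established codimension bounds. Its delicacy is twofold: one needs the minimal model to be smooth near the chosen preimage — which is precisely why part~(ii) carries its hypothesis on $x$, and why part~(i) relies on the factors $\Hilb^{n_i}_0(\Cs\times\Cs)$ and $\widetilde{\Hilb^2_0(\Cs\times\Cs)/\Z_2^2}$ being genuine resolutions — and one needs the conical-linearisation theorem to ensure that the residual finite stabiliser acts linearly and symplectically, rather than through an uncontrolled formal automorphism.
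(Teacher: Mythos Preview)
Your part (i) is fine and parallels the paper, though you labour over terminality: the paper simply notes that the singular locus of a local cone has codimension at least that of the singular locus in the ambient variety, and the ambient $\widetilde{\GCharg{K}}$ already has singular locus of codimension $\geq 4$ (this was established in the proof of Theorem~\ref{t:rmm}), so \cite{NamikawaNote} gives terminality at once, with no need to revisit the fixed-locus estimates.

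The real gap is in part (ii). You read ``minimal stratum $C_{(1,n)}$'' as the open stratum of irreducible representations (the smooth locus, via Lemma~\ref{lem:singlocusGoldman}), but ``minimal stratum'' means the bottom of the closure order, i.e., the \emph{closed} stratum of type $(n,1)$ consisting of representations $F^{\oplus n}$; the index in the statement is evidently transposed. Two things should have alerted you: under your reading the disjunction ``smooth point or point in $C_{(1,n)}$'' collapses to just ``smooth point'', making the clause vacuous; and the lemma so read is useless for the application to Theorem~\ref{t:class-res} in the basic case $K=\SL_n$ with $(n,g)\neq(2,2)$ and trivial $C_0$, since then every point of $\widetilde{\GCharg{K}}=\GCharg{\SL_n}$ is its own preimage and no singular point satisfies your hypothesis. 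With the intended reading the preimage in $\widetilde{\GCharg{\mathbb{G}}}$ is typically \emph{singular}, sitting at the most degenerate point of a factor $\GCharg{\SL_n}$, and your identification of the local cone with a linear quotient $\C^{2d}/S_0$ fails completely. The paper's route is different: the formal neighbourhood of the trivial local system in $\GCharg{\SL_n}$ is \emph{locally factorial}---this is what the Dr\'ezet argument in the proof of Theorem~\ref{thm:slsqfactor} actually establishes, and it propagates along the closed stratum---so the local cone upstairs is a product of locally factorial cones, hence locally factorial by \cite{OpenFactorial}, and the finite quotient is then $\Q$-factorial by the standard argument.
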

\begin{proof} Note that the terminal properties are immediate thanks to \cite{NamikawaNote}, since the codimension of the singular locus of a local cone must be at least that in  the ambient space.  So we only have to show that these local cones are $\Q$-factorial.  In the case of (i), this is a finite quotient of a smooth variety, so this is standard; for instance, see \cite[Section 2.4]{BellSchedler2large} and the references therein. For part (ii), we have to handle a finite quotient of $\widetilde{\GCharg{\mathbb{G}}}$, which is a product of smooth varieties and of factors $\GCharg{\SL_n}$. The proof of Theorem \ref{thm:slsqfactor} in Section \ref{sec:localfactchar} shows that the formal neighborhood of the trivial local system in $\GCharg{\SL_n}$ is locally factorial for $g \geq 2$ and $(n,g) \neq (2,2)$; that then also follows for every point of the minimal stratum. Thus our local cone is a finite quotient of a product of locally factorial cones.  Thanks to \cite{OpenFactorial}, the product is locally factorial, and since we take a finite quotient, again by \cite[Section 2.4]{BellSchedler2large} and the references therein, the result is $\Q$-factorial.
\end{proof}   

Lemma~\ref{lem:nonformalresolveYGg} implies that, in the situation of Theorem \ref{t:class-res} where part (c) is not satisfied, then there exists a point $x$ of
$\widetilde{\GCharg{\G}}$ whose formal neighborhood does not admit a
projective symplectic resolution: this follows by \cite[Lemma
6.16]{BS-srqv} (replacing the variety there by an arbitrary symplectic
cone) since the closed point of this formal neighborhood is singular yet $\Q$-factorial terminal. Let $y$ denote the image of $x$ in $\GCharg{\G}$. By Proposition~\ref{p:formal-neighborhood}, the formal neighborhood of $y$ is a conic symplectic singularity. Therefore, we deduce from \cite[Theorem 2.2]{BSnon} (based on \cite{Namikawa}) that $\GCharg{\G}$ does not admit a (projective) symplectic resolution. This completes the proof of Theorem \ref{t:class-res}.

\appendix
\section{Reductive group actions on local cones}
In this appendix, we prove a generalized version of the following
folklore result (cf., e.g, \cite{Hermann-formal-linearization}). We work over the complex
numbers, but one could replace this with any algebraically closed field of characteristic zero (and perhaps the algebraic
closure requirement is unnecessary).
\begin{thm}\label{t:folklore}
  Let $X$ be a smooth scheme over $\C$ and $G$ a reductive group acting on $X$ with fixed point $x \in X$. Then the action
  of $G$ can be linearized in a formal neighborhood of $x$.
\end{thm}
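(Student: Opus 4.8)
The plan is to prove the precise statement that there is a $G$-equivariant isomorphism of complete local $\C$-algebras $\widehat{\mathcal{O}}_{X,x} \cong \C[[V^*]]$, where $\mathfrak{m}\subset \mathcal{O}_{X,x}$ is the maximal ideal, $V^* := \mathfrak{m}/\mathfrak{m}^2$ carries the linear $G$-action induced by that on $\mathcal{O}_{X,x}$ (so that $V = T_x X$ carries the isotropy representation); dually this identifies the formal neighbourhood $\widehat{X}_x$, $G$-equivariantly, with the formal neighbourhood of $0$ in $V = T_x X$ equipped with its linear $G$-action. Before the main construction I would record the rationality input. Since $x$ is fixed, $G$ preserves $\mathfrak{m}$ and hence every power $\mathfrak{m}^k$, so $G$ acts on each finite-dimensional truncation $\mathcal{O}_{X,x}/\mathfrak{m}^k$ and on the pieces $\mathfrak{m}^k/\mathfrak{m}^{k+1}$. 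Choosing a $G$-stable affine open neighbourhood $U$ of $x$ (which exists by Sumihiro's theorem, since $x$ is a fixed point of the normal quasi-projective $G$-variety $X$), the ring $\mathcal{O}(U)$ is a rational $G$-module in which $\mathfrak{m}_x$ is a $G$-submodule, so each $\mathcal{O}(U)/\mathfrak{m}_x^k$ is a finite-dimensional, hence rational, $G$-module. As $G$ is reductive these modules are semisimple, and each carries a $G$-equivariant Reynolds projector.

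The core of the proof is to build, by successive approximation, a $G$-equivariant $\C$-linear map $\phi\colon V^* \to \widehat{\mathfrak{m}} = \varprojlim_k \mathfrak{m}/\mathfrak{m}^k$ reducing to the identity modulo $\mathfrak{m}^2$. Start with the $G$-equivariant map $\phi_2 = \mathrm{id}\colon V^* \to \mathfrak{m}/\mathfrak{m}^2$. Assuming a $G$-equivariant lift $\phi_k\colon V^*\to\mathfrak{m}/\mathfrak{m}^k$ of $\phi_{k-1}$ has been constructed, consider the set of $\C$-linear maps $\psi\colon V^*\to\mathfrak{m}/\mathfrak{m}^{k+1}$ lifting $\phi_k$: it is nonempty, and it is an affine subspace of the rational $G$-module $\Hom_\C(V^*,\mathfrak{m}/\mathfrak{m}^{k+1})$ which is stable under $G$ (because $\phi_k$ is $G$-fixed), being a torsor under $\Hom_\C(V^*,\mathfrak{m}^k/\mathfrak{m}^{k+1})$. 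Applying the Reynolds operator to any chosen lift produces a $G$-fixed lift, which we take to be $\phi_{k+1}$. Passing to the inverse limit gives the desired $G$-equivariant $\phi$.

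Since $\widehat{\mathcal{O}}_{X,x}$ is $\mathfrak{m}$-adically complete, $\phi$ extends uniquely to a continuous $G$-equivariant $\C$-algebra homomorphism $\widehat{\phi}\colon \C[[V^*]]\to\widehat{\mathcal{O}}_{X,x}$. By construction $\widehat{\phi}$ is an isomorphism on cotangent spaces, hence surjective by the complete-local-ring form of Nakayama's lemma; and it is a surjection between regular local rings, in particular domains, both of Krull dimension $n = \dim X = \dim V$, so its kernel is a prime ideal of the regular ring $\C[[V^*]]$ of full dimension $n$, that is, the zero ideal. Therefore $\widehat{\phi}$ is a $G$-equivariant isomorphism, establishing the formal linearization.

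The only step requiring genuine care is the rationality input: one must know the $G$-action on the truncations $\mathcal{O}_{X,x}/\mathfrak{m}^k$ is algebraic, so that reductivity of $G$ supplies a Reynolds operator; this is exactly where Sumihiro's theorem (and the quasi-projectivity of $X$, implicit in our setting) enters. Everything else---the degree-by-degree averaging and the final observation that a surjection of equidimensional regular local rings is an isomorphism---is routine. For the generalized statement of the appendix one carries along in addition a dilation (grading) action and a Poisson structure; the same averaging then works grading-piece by grading-piece, the extra point being to produce linearizing coordinates that are simultaneously homogeneous for the dilation.
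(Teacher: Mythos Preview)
Your proof is correct. The approach is somewhat different from the paper's, though the underlying idea is the same.

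The paper does not prove Theorem~\ref{t:folklore} directly; instead it deduces it as the special case of Theorem~\ref{t:conical-quotients} where the cone $Y$ is the tangent space $T_xX$. The argument there is packaged cohomologically: one writes the filtration-preserving automorphism group of $\widehat{Y}_y$ as a semidirect product $H_0 \ltimes U$ with $H_0$ the homogeneous (linear) automorphisms and $U$ pro-unipotent, and shows that the homomorphism $G_x \to H_0 \ltimes U$ is conjugate into $H_0$ because $H^1(G_x,U)=0$ for $G_x$ reductive and $U$ pro-unipotent. Your degree-by-degree averaging with the Reynolds operator is exactly the concrete content of that vanishing statement, carried out one filtration step at a time. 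So the two proofs are the same argument in different clothing: yours is explicit and constructive, the paper's is phrased so as to apply verbatim to arbitrary (not necessarily smooth) cones and to additional compatible structures (the Poisson bracket, as in Remark~\ref{rem:formalconjPoisson}).

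One small remark on your rationality input: you invoke Sumihiro's theorem and quasi-projectivity to get a $G$-stable affine neighbourhood. This works in the paper's conventions (varieties are quasi-projective), but it is not actually needed. Since $x$ is a fixed closed point, each infinitesimal neighbourhood $\Spec(\mathcal{O}_X/\mathcal{I}_x^k)$ is a $G$-stable closed subscheme of $X$; the induced action on its finite-dimensional coordinate ring $\mathcal{O}_{X,x}/\mathfrak{m}^k$ is then automatically algebraic, and reductivity gives the Reynolds operator without any appeal to an ambient affine. With that adjustment your argument applies to an arbitrary smooth scheme, matching the generality of the statement.
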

Note that a linearization here means that, for some choice of formal coordinates $\widehat{\mathcal{O}_{X,x}} \cong \C[\![x_1,\ldots,x_n]\!]$, the action of $G$ is via a linear representation $G \to \GL_n$.  

To generalize this, note that a linearization is the same thing as a \emph{homogeneous} action of $G$ on the polynomial ring $\C[x_1,\ldots,x_n]$ whose induced action on the completion $\C[\![x_1,\ldots,x_n]\!]$ coincides with the original one, up to isomorphism. We generalise this to cones:
\begin{defn}
  A cone is an affine variety $X=\Spec A$ whose coordinate ring $A$ is
  nonnegatively graded and connected ($A_0=\C$ and $A_{<0} = 0$).  It
  is called \emph{homogeneous} if $A$ is generated in a single
  positive degree $m > 0$ (without loss of generality $m=1$).
  \end{defn}
  In general, if $X$ is a cone then it is often called \emph{quasi-homogeneous}.
  \begin{example}
    The type $A_n$ Du Val singularity is $\{xy+z^n=0\} \subseteq \C^3$. This is quasi-homogeneous for
    the grading $|x|=|y|=n, |z|=2$, hence a cone. It is homogeneous if and only if $n=2$.
  \end{example}  
Given a cone $X = \Spec A$, there is a canonical action of the
multiplicative group by dilations. On a homogeneous function
$f \in A_n$ it is given by $\lambda \star f := \lambda^{-n} f$, for
$\lambda \in \C^\times$.  Conversely, we can recover the grading on
$\mathcal{O}(X)=A$ from this action. We say therefore that the
grading, and the resulting filtration
$\mathcal{O}(X)_{\leq n} := \bigoplus_{i \leq n} \mathcal{O}(X)_i$,
are the ones induced by dilations on $X$.
\begin{example}
  Let $X$ be any scheme and $x \in X$. Equip the local ring
  $\mathcal{O}_{X,x}$ with the $\mathfrak{m}_{X,x}$-adic filtration
  (for $\mathfrak{m}_{X,x}$ the maximal ideal). The tangent cone
  $C_{X,x} = \Spec \gr \mathcal{O}_{X,x}$ is then homogeneous.
  In general, $X$ is a homogeneous cone based at $x$ if and only if it is
  isomorphic to $C_{X,x}$.
\end{example}
For a general 
cone $X$, the local ring
$\mathcal{O}_{X,x}$, and its completion, are equipped with two in
general distinct filtrations: the $\mathfrak{m}_{X,x}$-adic
filtration, and the one coming from dilations on $X$ (i.e., the descending filtration generated by homogeneous global function on $X$).
\begin{example}
  Let $X$ be a scheme and $x \in X$. If $G$ is any reductive group
  acting on $X$ fixing $x$, then the categorical quotient
  $C_{X,x}/\!/G = \Spec \gr \mathcal{O}_{X,x}^G$ is also a cone (but
  in general
  not homogeneous). In general,
  $C_{X,x}/\!/G$ and $X/\!/G$ need not be formally isomorphic at the
  image $\overline{x}$ of $x$. If they are, then $X/\!/G$ is formally
  locally a cone at $\overline{x}$.
\end{example}
In view of these examples, Theorem \ref{t:folklore} can be restated as the following: if $X$ is \emph{smooth}
at $x$, then there is an isomorphism of formal neighborhoods of $x$ of the two pairs $(X,G)$ and $(C_{X,x},G)$. Thus, $X/\!/G$ is formally locally a cone
at $\overline{x}$.  

Our main result gives a generalization of this to the case where $X$ need not be smooth and $x$ need not be a fixed point:
\begin{thm}\label{t:conical-quotients}
  Let $G$ be a reductive group acting on a scheme $X$ and let $x \in X$ be a point such that
  the formal neighborhood
  $\widehat{X}_x$ is a cone, i.e., isomorphic to $\widehat{Y}_y$ for $Y$ a cone with cone point $y \in Y$.
  Assume moreover that either:
  \begin{enumerate}
  \item[(i)] $Y$ is homogeneous (i.e., $Y \cong C_{X,x}$); or
  \item[(ii)] The induced action of the stabilizer $G_x$ on $\widehat{Y}_y$ preserves the filtration coming from dilations.
\end{enumerate}
Then the following hold:
\begin{enumerate}
\item[(a)] If $G_x$ is reductive, then the action of $G_x$ on $\widehat{Y}_y$ is conjugate to a homogeneous (i.e., dilation-equivariant) action;
\item[(b)] If $X$ is an affine variety and the orbit $G\cdot x$ is closed, then $X/\!/G$ is formally locally a cone at $\overline{x}$.
\end{enumerate}
\end{thm}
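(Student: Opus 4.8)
\emph{Overall strategy.} The plan is to deduce (b) from (a) by means of Luna's \'etale slice theorem, and to prove (a) as a formal linearization statement along the dilation filtration: an action of a reductive group that preserves this filtration on a formal cone is conjugate to its associated-graded action, and the latter is homogeneous. In both cases of the hypothesis the dilation filtration $F^\bullet$ on $\widehat{Y}_y$ is $G_x$-stable --- automatically in case (i), since a homogeneous cone is generated in degree one, so that $F^\bullet$ coincides with the $\mf{m}$-adic filtration, which every ring automorphism preserves; and by assumption in case (ii). In either case $\gr_F R$, where $R := \mathcal{O}(\widehat{Y}_y)$, is canonically the graded ring $B := \mathcal{O}(Y)$, so $G_x$ acts on $B$ preserving the grading.

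\emph{Proof of (a).} First I would use linear reductivity of $G_x$ to choose a $G_x$-stable graded subspace $W = \bigoplus_d W_d \subseteq B_+$ mapping isomorphically onto $B_+/B_+^2$; this yields a graded $G_x$-equivariant surjection $\sym(W) \twoheadrightarrow B$ with homogeneous, $G_x$-stable kernel $I$, and hence a presentation $\widehat{B} \cong \widehat{\sym(W)}/\widehat{I}$ carrying a homogeneous $G_x$-action. Second, I would lift this to $R$: construct a $G_x$-equivariant, $F$-filtered linear map $s\colon W \to R$ with $s(w)$ of leading term $w$ for $w \in W_d$. This is possible because each surjection $F^d \twoheadrightarrow F^d/F^{d+1} = B_d$ of $G_x$-modules splits equivariantly, and, $R$ being complete, these splittings can be chosen compatibly over the finite-dimensional rational quotients $R/F^n$. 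Extending $s$ multiplicatively and continuously gives a $G_x$-equivariant filtered algebra map $\Phi\colon \widehat{\sym(W)} \to R$ whose associated graded is the standard surjection $\sym(W)\to B$; thus $\Phi$ is onto and $J := \ker\Phi$ is $G_x$-stable with $\gr_F J = I$. Third, using the given isomorphism $R \cong \widehat{Y}_y \cong \widehat{B} = \widehat{\sym(W)}/\widehat{I}$, I get a second surjection $q\colon \widehat{\sym(W)} \to R$ with $\gr_F q$ again the standard surjection, and I would lift $s$ through $q$ by a $G_x$-equivariant filtered map $W \to \widehat{\sym(W)}$ (possible by the same inverse-limit argument, the obstruction lying in $\Ext^1_{G_x}(W,\widehat{I}) = 0$), extending it to a filtered endomorphism $\beta$ of $\widehat{\sym(W)}$ with $\gr_F\beta = \mathrm{id}$ --- hence an automorphism --- satisfying $q\circ\beta = \Phi$, so that $\beta(J) = \widehat{I}$. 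Then $\beta$ descends to a $G_x$-equivariant isomorphism $R = \widehat{\sym(W)}/J \to \widehat{\sym(W)}/\widehat{I} = \widehat{B}$ intertwining the original $G_x$-action with the homogeneous one, which is exactly (a).

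\emph{Proof of (b).} Since $G\cdot x$ is closed, $G_x$ is reductive (Matsushima's criterion), and Luna's \'etale slice theorem provides a $G_x$-stable locally closed affine subscheme $x \in S \subseteq X$ with $G\times_{G_x}S \to X$ \'etale onto a saturated neighbourhood of $G\cdot x$ and $S\git G_x \to X\git G$ \'etale near the images of $x$; hence $\widehat{(X\git G)}_{\overline{x}} \cong \widehat{(S\git G_x)}_{\overline{x}}$, and since $G_x$ now fixes $x$ and invariants commute with completion in this situation, this equals $(\widehat{\mathcal{O}(S)}_x)^{G_x}$. The \'etale map identifies $\widehat{X}_x$ with $\widehat{(G\times_{G_x}S)}_{[e,x]}$, which formally is the product of a regular factor with $\widehat{S}_x$; since $\widehat{X}_x$ is a formal cone, cancellation of the regular factor shows $\widehat{S}_x$ is a formal cone, and the same splitting transfers hypothesis (i), resp. (ii), to $(S,G_x,x)$ (for (i), because the tangent cone of $X$ at $x$ is likewise the product of that regular factor with the tangent cone of $S$). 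Applying (a) to $(S,G_x,x)$, the $G_x$-action on $\widehat{S}_x$ is conjugate to a homogeneous action on some $\widehat{B'}$, so $(\widehat{\mathcal{O}(S)}_x)^{G_x} \cong (\widehat{B'})^{G_x} \cong \widehat{(B')^{G_x}}$, and $(B')^{G_x}$ is a connected, finitely generated graded ring --- the coordinate ring of a cone. Hence $X\git G$ is formally locally a cone at $\overline{x}$.

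\emph{Main obstacle.} I expect the crux to be the equivariant lifting and splitting steps in the proof of (a): $\widehat{Y}_y$ is only a pro-(finite-dimensional rational) $G_x$-module, so the existence of $G_x$-equivariant sections and lifts must be established by assembling compatible choices over the finite-dimensional quotients $R/F^n$ and checking surjectivity of the transition maps so that the inverse limits are non-empty --- this is precisely where reductivity of $G_x$ is used, via vanishing of $\Ext^1_{G_x}$ on rational modules. A secondary delicate point is verifying in (b) that the Luna slice genuinely inherits both the conical structure and the relevant hypothesis (i) or (ii), which rests on a formal cancellation statement for regular factors of complete local rings and on the compatibility of this cancellation with the $G_x$-action.
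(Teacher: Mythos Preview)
Your overall plan for (a)---conjugate the action to its associated graded---is exactly the paper's idea, and your treatment of (b) via Luna's slice theorem is also the paper's route (with more detail than the paper gives). However, your explicit construction in (a) has a genuine gap at the third step.

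You ask for a $G_x$-equivariant filtered automorphism $\beta$ of $\widehat{\sym(W)}$ satisfying $q\circ\beta=\Phi$. But $\Phi$ intertwines the homogeneous action on $\widehat{\sym(W)}$ with the \emph{original} action on $R$, whereas $q$ (the completed standard surjection) intertwines the homogeneous action on $\widehat{\sym(W)}$ with the \emph{homogeneous} action on $R=\widehat B$. If $\beta$ is $G_x$-equivariant for the homogeneous action, then for all $g\in G_x$ and $a\in\widehat{\sym(W)}$ one gets
\[
g\cdot_{\mathrm{orig}}\Phi(a)=\Phi(g\cdot a)=q(\beta(g\cdot a))=q(g\cdot\beta(a))=g\cdot_{\mathrm{hom}}q(\beta(a))=g\cdot_{\mathrm{hom}}\Phi(a),
\]
forcing the original and homogeneous actions on $R$ to coincide---precisely what you are trying to prove, so you cannot assume it. Equivalently, your proposed equivariant lift of $s$ through $q$ does not exist: $s$ is equivariant for the original action on $R$, while the short exact sequence $0\to\widehat I\to\widehat{\sym(W)}\xrightarrow{q}R\to 0$ is one of $G_x$-modules only for the \emph{homogeneous} action on $R$, so the obstruction lies in the wrong $\Ext^1$. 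What you actually need is a $G_x$-equivariant $\beta$ with $\beta(\widehat I)=J$ \emph{without} the constraint $q\circ\beta=\Phi$; establishing that requires a separate inductive or cohomological argument.

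The paper bypasses this entirely: it lets $H$ be the group of filtered automorphisms of $\widehat Y_y$, observes $H=H_0\ltimes U$ with $H_0$ the homogeneous automorphisms and $U$ the pro-unipotent kernel of ``take associated graded,'' and then notes that the action $\rho\colon G_x\to H$ differs from $\rho_0=\pi_0\circ\rho\colon G_x\to H_0$ by a $1$-cocycle valued in $U$; reductivity of $G_x$ and pro-unipotence of $U$ give $H^1(G_x,U)=0$, so $\rho$ is conjugate to $\rho_0$. This is the clean replacement for your third step, and it is exactly the ``main obstacle'' you anticipated, resolved by working in the automorphism group rather than by explicit lifts.
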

Note that, by Matsushima's criterion, the hypotheses of (b) imply those of (a). Also, note that hypothesis (i) implies hypothesis (ii), since in this case the filtration coming from dilations is the $\mathfrak{m}_{X,x}$-adic filtration. It is clear that this hypothesis is satisfied in the situation of Theorem \ref{t:folklore}, so that it is a special case of Theorem \ref{t:conical-quotients}.
\begin{proof}[Proof of Theorem \ref{t:conical-quotients}]
  As observed, it is enough to assume condition (ii). We first prove (a).  Let $H$ be the group of all automorphisms of $\widehat{Y}_y$ (i.e., of $\widehat{\mathcal{O}_{Y,y}}$) preserving the filtration coming from dilations (this is just the entire automorphism group in the homogeneous case).  Let $H_0 < H$ be
  the subgroup of homogeneous automorphisms of $Y$ (i.e., automorphisms commuting with dilations).  We have a splitting of the inclusion, $\pi_0: H \to H_0$, given
  by taking the associated graded automorphism. Let $U$ be the kernel. Observe that $U$ is pro-unipotent.  We have
  a semi-direct product decomposition $H = H_0 \ltimes U$.

  Now, the action of $G_x$ can be expressed as a homomorphism $\rho: G_x \to H_0 \ltimes U$. Statement (a) amounts to saying that this is conjugate to the homomorphism $\rho_0 = \pi_0 \circ \rho: G_x \to H_0$.  Letting $\pi': H_0 \ltimes U \to U$ be the (set-theoretic) projection to the second factor,
  $\rho$ defines a one-cocycle $\rho'=\pi' \circ \rho: G \to U$, with respect to the action given by $\rho_0$.  Then (a) is equivalent to the statement that $\rho'$ is a coboundary. Since $G_x$ is reductive, $H^1(G_x, \C)=0$ as there are no nontrivial homomorphisms from a reductive group to a connected unipotent one. 
  Since $U$ is pro-unipotent, the long-exact sequences on group cohomology imply also that $H^1(G_x, U) = 0$. This proves (a).

  For part (b), we apply Luna's slice theorem. The construction of the formal slice to $G\cdot x$ at $x \in X$ can be constructed in the formal neighborhood
  $\widehat{X}_x$ so as to be homogeneous with respect to dilations (by finding a section of the Zariski tangent space
  $\mathfrak{m}_{X,x}/\mathfrak{m}_{X,x}^2$ in $\widehat{\mathcal{O}_{X,x}}$ which is equivariant for dilations and the action of $G_x$, and similarly for a complement to the Zariski tangent space of $G \cdot x$,
  see \cite[Section III]{Luna}). The formal neighborhood of $X/\!/G$ at $\overline{x}$ is isomorphic to the quotient of this formal slice by $G_x$, so the result follows from (a).
\end{proof}

\begin{remark}\label{rem:formalconjPoisson}
	If $X$ is a Poisson scheme and $G$ preserves the Poisson structure then the formal cone $\widehat{Y}_y$ of Theorem~\ref{t:conical-quotients} inherits a Poisson structure from $X$. By restricting the group $H$ in the proof of the theorem to automorphisms preserving the Poisson structure, we see that the action of $G_x$ on $\widehat{Y}_y$ is conjugate to a homogeneous action preserving the Poisson structure. In this case, Theorem~\ref{t:conical-quotients}(b) says that $X \git \, G$ is formally locally a Poisson cone at $\overline{x}$.    
\end{remark}


\def\cprime{$'$} \def\cprime{$'$} \def\cprime{$'$} \def\cprime{$'$}
\def\cprime{$'$} \def\cprime{$'$} \def\cprime{$'$} \def\cprime{$'$}
\def\cprime{$'$} \def\cprime{$'$} \def\cprime{$'$} \def\cprime{$'$}
\def\cprime{$'$} \def\cprime{$'$}

\end{document}